\newcommand{\iz}{I_Z}\newcommand{\oz}{\mathcal{O}_Z}
\newcommand{\oc}{\mathcal{O}_C}
\newcommand{\mO}{\mathcal{O}}
\newcommand{\Sing}{\operatorname{Sing}}
\newcommand{\Z}{{\mathbb Z}}
\newcommand{\C}{\mathbb{C}}
\newcommand{\K}{\mathbb{K}}
\newcommand{\p}[1]{{\mathbb{P}^{#1}}}
\newcommand{\op}[1]{{\mathcal O}_{\mathbb{P}^{#1}}}
\newcommand{\ox}{{\mathcal O}_{X}}
\newcommand{\sing}{\operatorname{Sing}}
\newcommand{\sI}{\mathscr{I}}
\newcommand{\cO}{\mathcal{O}}
\newcommand{\inext}{{\mathcal E}{\it xt}}
\newcommand{\Ext}{\operatorname{Ext}}
\newcommand{\Hom}{\operatorname{Hom}}
\DeclareMathOperator{\Aut}{Aut}
\DeclareMathOperator{\coker}{coker}
\DeclareMathOperator{\im}{im}
\DeclareMathOperator{\rk}{{rk}}
\DeclareMathOperator{\Pic}{{Pic}}
\newcommand{\nf}{N_{\mathscr{F}}}
\def\sF{{\mathscr{F}}}
\newcommand{\into}{\hookrightarrow}
\newcommand{\onto}{\twoheadrightarrow}
\newtheorem{theorem}{Theorem}[section]
\newtheorem{mthm}{Main Theorem}
\newtheorem{proposition}[theorem]{Proposition}
\newtheorem{lemma}[theorem]{Lemma}
\newtheorem{corollary}[theorem]{Corollary}
\theoremstyle{definition}
\newtheorem{remark}[theorem]{Remark}
\newtheorem{example}[theorem]{Example}
\newtheorem{definition}[theorem]{{\bf Definition}}
\title{Foliations by curves on threefolds}
\author[A. Cavalcante]{Alana Cavalcante}
\address{DECEA - UFOP\\
Departamento de Ciências Exatas e Aplicadas\\
Rua 36 - Loanda, 115 \\
35931-008 João Monlevade - MG, Brazil}
\email{alana.decea@ufop.edu.br}
\author[M. Jardim]{Marcos Jardim}
\address{IMECC - UNICAMP \\ Departamento de Matem\'atica \\
Rua S\'ergio  Buarque de Holanda, 651\\ 13083-970 Campinas-SP, Brazil}
\email{jardim@ime.unicamp.br}
\author[D. Santiago]{Danilo Santiago}
\address{IMECC - UNICAMP \\ Departamento de Matem\'atica \\
Rua S\'ergio  Buarque de Holanda, 651\\ 13083-970 Campinas-SP, Brazil}
\email{d192309@dac.unicamp.br}
\begin{document}

\maketitle

\begin{abstract}
We study the conormal sheaves and singular schemes of 1-dimensional foliations on smooth projective varieties $X$ of dimension 3 and Picard rank 1. We prove that if the singular scheme has dimension 0, then the conormal sheaf is $\mu$-stable whenever the tangent bundle $TX$ is stable, and apply this fact to the characterization of certain irreducible components of the moduli space of rank 2 reflexive sheaves on $\mathbb{P}^3$ and on a smooth quadric hypersurface $Q_3\subset\mathbb{P}^4$. 
Finally, we give a classification of local complete intersection foliations, that is, foliations with locally free conormal sheaves, of degree 0 and 1 on $Q_3$.
\end{abstract}

\tableofcontents

\section{Introduction}\label{sec1}
	
The study of holomorphic foliations on complex manifolds is a classical topic of research that goes back to the end of the $19^{\rm th}$ century, though the qualitative study of polynomial differential equations by Poincaré, Darboux, and Painlevé, and currently with ramifications to complex geometry and algebraic geometry. We are presently interested in the latter, and our objective is to apply algebraic geometric techniques to understand the normal and conormal sheaves and singular schemes of foliations of dimension 1 on smooth projective varieties of dimension 3. 

So let $X$ be a smooth projective threefold $X$ of Picard rank 1. Let $\ox(1)$ denote the ample generator of $\Pic(X)$, and given a sheaf $F$ on $X$ we set $F(r):=F\otimes\ox(1)^{\otimes r}$, as usual. Let $TX$ denote the tangent bundle of $X$ and define
$$ \tau_X := \min\{t \in \Z ~|~ H^0(TX(t)) \neq 0\} ~~,~~
\rho_X:= \min\{t \in \Z ~|~ H^0(\Omega^1_X(t)) \neq 0 \} ~~, $$
$$ \nu_X := \int_X H^3 ~~~~{\rm and}~~~~ c_X := \int_X c_1(\Omega^1_X)\cdot H^2 = - \int_X c_1(TX)\cdot H^2 , $$
where $H:=c_1(\ox(1))$ is the ample generator of $\Pic(X)$.

A \emph{foliation by curves $\sF$ on $X$} is a short exact sequence of the form
\begin{equation}\label{folcurve}
	\sF ~~:~~ 0\to\ox(-r-\tau_X) \stackrel{\sigma}{\to} TX \to \nf \to 0
\end{equation}
where $\nf$ is a torsion free sheaf called the \emph{normal sheaf} of $\sF$; this is also known in the literature as a \emph{Pfaff field of rank 1}. The non negative integer $r$ above is called the \emph{degree} of $\sF$; we note that this definition is in general different from the one proposed in \cite[Definition 2]{CJ} for Pfaff fields, though both definition coincide for $X=\p3$. Note that $\rk(\nf)=2$.

The image of the morphism $\sigma^\vee:TX\to\ox(\tau_X+r)$ is the twisted ideal sheaf $\sI_Z(r+\tau_X)$ of a subscheme of $X$ of dimension at most 1, called the \emph{singular scheme} of $\sF$ and denoted by $\sing(\sF)$. Thus dualizing the sequence in display \eqref{folcurve} we obtain
\begin{equation}\label{dual}
	0 \to \nf^\vee \to \Omega^1_X \stackrel{\sigma^\vee}{\to} \sI_Z(r+\tau_X) \to 0,
\end{equation}
where $\nf^\vee$ is called the \emph{conormal sheaf of $\sF$}.

In general, the singular scheme $Z:=\sing(\sF)$ may contain a pure 1-dimensional subscheme obtained as follows. Let $U$ be the maximal 0-dimensional subsheaf of $\oz$; the quotient sheaf $\oz/U$ must be the structure sheaf of pure dimension 1 scheme, call it $C$. We therefore obtain the exact sequence
\begin{equation} \label{sequence I}
	0\to U \to \oz \to \oc \to 0
\end{equation}
or, equivalently,
\begin{equation} \label{sequence II}
	0 \to \sI_Z \to \sI_C \to U \to 0.
\end{equation}
The scheme $C$ is called the \emph{1-dimensional component} of the singular scheme of $\sF$, and it is denoted by $\sing_1(\sF)$.

Foliations by curves on threefolds have not been widely considered so far. A systematic study of for the case $X=\p3$ was initiated in \cite{NC} and continued in \cite{CJM}. Furthermore, the authors of \cite{AMS} consider foliations by curves on Fano threefolds, obtaining results regarding the connectedness of the singular scheme $\sing(\sF)$ and the stability of the conormal sheaf $\nf^\vee$. In all of these papers the focus was on foliations whose singular scheme has pure dimension 1, which implies that conormal sheaf must be locally free; we will consider arbitrary foliations by curves and a wider class of threefolds, generalizing many of the results obtained in \cite{AMS,CJM,NC}. We emphasize that this is a considerable step forward for two reasons: first, rank 2 locally free sheaves are much more restrictive class in comparison to rank 2 reflexive sheaves; second, a generic vector field $\sigma\in H^0(TX(r+\tau_X)$ gives rise to a foliation by curves as in display \eqref{folcurve} only isolated singularities, and foliations by curves with locally free conormal sheaves only occur in high codimension. 

The first goal of this paper is to provide a relation between the Chern classes of the conormal sheaf $\nf^\vee$ and the discrete invariants of the singular scheme $Z$, namely the length of $U$ and the degree and genus of $C$, see Section \ref{sec:sing} below. In particular, we show that
\begin{equation} \label{int-c3 intro}
	\int_X c_3(TX(r+\tau_X)) =
	- \int_X c_3(\Omega^1_X(-r-\tau_X)) =
	h^0(U) + \sum_j \mu_X(C_j,r),
\end{equation}
where $\sing_1(\sF)=\bigsqcup_j C_j$ is the partition of the curve $\sing_1(\sF)$ into its connected components, and 
\begin{equation} \label{mu-inv intro}
	\mu_X(C,r) := 3(r+\tau_X)\deg(C) + 2\chi(\oc) .
\end{equation}

Next, we turn our attention to describing properties of the conormal sheaf of a foliation by curves. We study its Betti numbers $h^p(\nf^\vee(k))$ and give criteria that guarantee its stability (in the sense of Mumford--Takemoto). 

Recall that a torsion free sheaf $F$ is said to be $\mu$-\emph{(semi)stable} (with respect to $H$) if every subsheaf $L\subset F$ for which $F/L$ is torsion free satisfies
$$ \dfrac{c_1(L)\cdot H^2}{\rk(L)} < (\le)~ \dfrac{c_1(F)\cdot H^2}{\rk(F)}. $$
In particular, note that if $TX$ is $\mu$-stable, then $\tau_X>c_X/3\nu_X$. 

\begin{mthm} \label{mthm-stable}
	Let $\sF$ be a foliation by curves of degree $r$ satisfying $\dim\sing(\sF)=0$ on a smooth projective threefold $X$ with $\Pic(X)=\Z$ such that $h^1(\ox(t))=0$ for all $t \in \Z$. 
	\begin{enumerate}
		\item Assume that $\dim Z=0$; if $r > (\geq) ~ c_X/\nu_X - 3\tau_X,$ then $N_{\sF}$ is $\mu$-(semi)stable; if $TX$ is $\mu$-stable, then $N_{\sF}$ is $\mu$-stable for every $r$.
		\item If $r <(\leq) ~ 2 \rho_X - \tau_X  + c_X/\nu_X,$ then the conormal sheaf of a foliation by curves of degree $r$ on $X$ is $\mu$-(semi)stable.
	\end{enumerate}
\end{mthm}

This is proved in Section \ref{sec:stable}. We observe that the hypothesis $h^1(\ox(t))=0$ for all $t \in \Z$ is satisfied by every Fano threefold, while $TX$ is $\mu$-stable whenever $X$ is a smooth weighted projective complete intersection Fano threefold with Picard number equal to one.

The set of vector fields $\sigma\in H^0(TX(r+\tau_X))$ for which $\dim\coker\sigma^\vee=0$ is an open subset of $\mathbb{P}( H^0(TX(r+\tau_X))$. For this reason, foliations by curves satisfying $\dim\sing(\sF)=0$ are called \emph{generic}. Therefore, the first part of Main Theorem \ref{mthm-stable} implies that generic foliations by curves of degree $r$ provide a family of $\mu$-stable rank 2 reflexive with given Chern classes parametrized by and open subset of $\mathbb{P}( H^0(TX(r+\tau_X))$. It turns out that such families are dense within an irreducible component of the (Gieseker--Maruyama) moduli space of stable rank 2 sheaves on the projective space $\p3$ and
while only defined a closed subset within an irreducible component of the moduli space of stable rank 2 sheaves on a smooth quadric threefold $Q_3$. The following theorem, proved in Sections \ref{sec:p3} and \ref{sec:q3}, arises as an interesting application of Main Theorem \ref{mthm-stable} to the study of moduli spaces of rank 2 reflexive sheaves on threefolds.

\begin{mthm} \label{mthm-p3q3}
	\begin{enumerate}
		\item The moduli space of stable rank 2 sheaves on $\p3$ with Chern classes
		$$ (c_1,c_2,c_3) = \left\{ \begin{array}{l}
			(0,3k^2+4k+2,8k^3+16k^2+12k+4),~~ k\ge1 \\ (-1,3k^2+k+1,8k^3+4k^2+2k+1),~~ k\ge0
		\end{array}\right. $$
		contains a rational irreducible component whose generic point is the normal sheaf of a generic foliation be curves on $\p3$.
		\item The moduli space of stable rank 2 sheaves on $Q_3$ with Chern classes
		$$ (c_1,c_2,c_3) = \left\{ \begin{array}{l}
			(0,(3k^2+6k+4)H^2,(8k^3+24k^2+26k+6)H^3),~~ k\ge1 \\ (-H,(3k^2+3k+1)H^2,(8k^3+12k^2+8k-2)H^3),~~ k\ge0
		\end{array}\right. $$
		possesses an irreducible component which contains, as a closed subset, the normal sheaves of generic foliations by curves on $Q_3$.
	\end{enumerate}
\end{mthm}

It is worth remarking that Main Theorem \ref{mthm-stable} and \ref{mthm-p3q3} are parallel to \cite[Theorem 1]{CCJ2} and \cite[Theorem 3]{CCJ2}, respectively, concerning generic \emph{codimension one distributions} on threefolds.

Finally, we consider \emph{local complete intersection (LCI) foliations}, which are defined as foliations by curves with locally free conormal sheaves; the nomenclature is motivated by the fact that they are given locally as the intersection of two codimension one distributions. When the conormal sheaf $\nf^\vee$ splits as a sum of line bundles, we say that $\sF$ is a \emph{complete intersection (CI) foliation}; CI foliations by curves on Fano thereefolds are studied in \cite{AMS}, where characterizations in terms of the singular scheme are provided. Here, motivated by the classification of LCI foliations by curves of low degree on $\p3$ given in \cite{CJM}, we give the first steps towards a classification of LCI foliations by curves on smooth quadric hypersurfaces in $\p4$ of degree 0 and 1. To be precise, we prove in Section \ref{sec:lci q3}:

\begin{mthm} \label{mthm-lci}
	Let $\sF$ be a local complete intersection foliation of degree d a smooth quadric hypersurface $Q_3\subset\p4$.
	\begin{enumerate}
		\item For $d=0$, we have that $\nf^\vee=S(-1)$, where $S$ is the spinor bundle, and $\sing(\sF)$ is the disjoint union of a line and a conic.
		\item For $d=1$, we have that $E:=\nf^\vee(2)$ and $C:=\sing(\sF)$ are one of the following three possibilities:
		\begin{enumerate}
			\item[(2.1)] $E$ is the $\mu$-stable bundle with Chern classes $c_1(E)=0$ and $c_2(E)=2L$ and $C$ is a rational curve of degree 6;
			\item[(2.2)] $E$ is the $\mu$-semistable bundle with Chern classes $c_1(E)=0$ and $c_2(E)=2L$ and $C$ is a curve of degree 6 given by the union of a rational and an elliptic curves.
			\item[(2.3)] $E=\mathcal{O}_Q^{\oplus 2}$ and $C$ is a connected curve of degree 8 and genus 3.
		\end{enumerate}
	\end{enumerate}
\end{mthm}


\section{Preliminary results}\label{prelims}

Following the notation set up in the Introduction, we will use this section to go over some details that will be useful later on. 

We begin by observing that the dualization of the sequence in display \eqref{folcurve} also yields:
\begin{equation}\label{ext-n}
	\inext^1(\nf,\ox)\simeq \oz(r+\tau_X) ~~{\rm and}~~
	\inext^p(\nf,\ox)=0 ~~{\rm for}~~ p=2,3.
\end{equation}
In particular, note that if $\sF$ is not generic, then $\nf$ is not reflexive, since in this case $\inext^1(\nf,\ox)$ would be a 0-dimensional sheaf.  

In addition, we remark that the conormal sheaf $\nf^\vee$, which is always reflexive, is locally free if and only if $\sF$ has no isolated singularities, that is $U=0$ and $\sing(\sF)=\sing_1(\sF)$. Indeed, dualizing the exact sequences in displays \eqref{dual} and \eqref{sequence II}, we obtain
\begin{equation} \label{exts}
	\inext^1(\nf^\vee,\ox) \simeq \inext^2(\iz(r+\tau_X),\ox) \simeq 
	\inext^3(U,\ox);
\end{equation}
in particular, note that 
\begin{equation} \label{c3h0}
	h^0(\inext^1(\nf^{\vee},\ox)) = h^0(\inext^3(U,\ox)) = h^0(U), 
\end{equation}
where the last inequality follows from Serre duality.

Given a rank 2 reflexive sheaf $F$ on a projective threefold $X$ with Picard rank 1, one can show, using the same argument as in \cite[Proposition 2.6]{RH2}, that
\begin{equation} \label{c3h0'}
	\int_X c_3(F) = h^0(\inext^1(F,\ox)); 
\end{equation}
indeed, first note that if $Q$ is a 0-dimensional sheaf on $X$, then Grothendieck--Riemann--Roch implies that $2h^0(Q)=\int_Xc_3(Q)$.
Recall that $F$ admits a resolution $0\to L_1\to L_0\to F\to 0$, where $L_1$ and $L_0$ are locally free sheaves; dualizing this sequence, one obtains, since $F^\vee\simeq F\otimes\det(F)^\vee$
$$ 0 \to F \to L_0^\vee\otimes\det(F) \to L_1^\vee\otimes\det(F) \to \inext^1(F,\ox) \to 0. $$
Comparing Chern classes, one concludes that $2c_3(F)=c_3(\inext^1(F,\ox))$, and integration leads to the equality in display \eqref{c3h0'}.

Letting $\sF$ be a foliation by curves, we gather the equalities in displays \eqref{c3h0} and \eqref{c3h0'} to obtain
\begin{equation} \label{c3h0''}
	\int_X c_3(\nf^\vee) = h^0(U).
\end{equation}

Finally, the simplest examples of foliations by curves are the complete intersection ones, given by sequences of the form 
$$ 0\to \ox(-r_1-\rho_X)  \oplus \ox(-r_2-\rho_X) \to 
\Omega^1_X \to \sI_C(r+\tau_X) \to 0 , $$
where $r=c_X/\nu_X+r_1+r_2+2\rho_X-\tau_X$.
Local complete intersection foliations by curves on hypersurfaces $X\subset\p4$ can be constructed as follows.

\begin{example}
	Let $F$ be a globally generated rank $2$ locally free sheaf on a threefold hypersurface $X\subset\p4$. Note that $\Omega^1_{X}(2)$ is globally generated, since we have epimorphisms
	$$ \Omega^1_{\p4}(2) \twoheadrightarrow \left( \Omega^1_{\p4}(2)\right)|_X \twoheadrightarrow \Omega^1_{X}(2) $$
	and $\Omega^1_{\p4}(2)$ is globally generated. We then have that $F\otimes\Omega^1_{X}(2)$ is also globally generated, thus \cite[Teorema 2.8]{O} implies that there exists an injective morphism $\phi:F^\vee\to\Omega^1_X(2)$ that degenerates in codimension at least 2. It follows that $\coker\phi$ is a torsion free sheaf of rank 1, therefore
	$$ \sF ~~:~~ 0 \rightarrow F^{\vee}(-2)  \stackrel{\phi}{\rightarrow} \Omega^1_{X} \rightarrow \sI_Z (r+\tau_X) \rightarrow 0 $$
	is a LCI foliation by curves on $X$ with
	$r = c_1(\Omega^1_X) - c_1(F^\vee(-2)) - \tau_X = d  + c_1(F) - \tau_X - 1,$
	where $d$ is the degree of $X.$
\end{example} 



We recall that a vector bundle $F$ on $X$ is $0-$Buchsbaum if and only if $F$ has no intermediate cohomology,
i.e. $H^i(X,F(t)) = 0$ for every $t \in \mathbb{Z}$ and $1 \leq i \leq n-1.$ This bundles are also called arithmetically Cohen Macaulay.
If $F$ has all the intermediate cohomology modules with trivial structure, it is called $1-$Buchsbaum (i.e. arithmetically Buchsbaum).

We will use the following relation between the conormal sheaf of a foliation by curves and its singular locus \cite{AMS}.

\begin{theorem} \cite[Theorem 1.2]{AMS}
	Let $\sF$ be a distribution of dimension one on a smooth weighted projective complete intersection Fano threefold $X,$ with index $\iota_X$ and Picard number one. If $Z=\Sing(\sF)$ is the singular scheme of $\sF,$ then:
	\begin{enumerate}
		\item If $N^{\vee}_{\sF}$ is arithmetically Cohen Macaulay, then $Z$ is arithmetically Buchsbaum, with \\ $h^1(X, \sI_Z(r)) =1$ being the only nonzero intermediate cohomology for $H^i(\sI_Z).$
		\item If $Z$ is arithmetically Buchsbaum with $h^1(X, \sI_Z(r)) =1$ being the only nonzero intermediate cohomology for $H^i(\sI_Z)$ and $h^2(N^{\vee}_{\sF}) =h^2(N^{\vee}_{\sF}(-c_1(N^{\vee}_{\sF}) - \iota_X)) =0$ and $\iota_X \in {1, 2, 3},$ then $N^{\vee}_{\sF}$ is arithmetically Cohen Macaulay.
	\end{enumerate}
\end{theorem}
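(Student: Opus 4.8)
My plan is to read everything off the dualized defining sequence of the conormal sheaf, which in the normalization of the statement reads
\begin{equation*}
0 \to \nf^\vee \to \Omega^1_X \to \sI_Z(r) \to 0
\end{equation*}
(compare display \eqref{dual}), combined with the single cohomological input about the ambient threefold: because $X$ is a smooth weighted projective complete intersection Fano threefold with $\Pic(X)=\Z$, its cotangent bundle has no middle cohomology beyond the one Hodge class forced by $h^{1,1}(X)=1$, that is $H^2(\Omega^1_X(t))=0$ for every $t$, while $H^1(\Omega^1_X(t))=\C$ for $t=0$ and vanishes otherwise. Twisting the sequence by $\ox(t)$ and assembling the long exact sequences over all $t$ into graded modules over the homogeneous coordinate ring of $X$ yields the exact sequence
\begin{equation*}
H^1_*(\nf^\vee) \xrightarrow{\,a\,} H^1_*(\Omega^1_X) \xrightarrow{\,b\,} H^1_*(\sI_Z)(r) \xrightarrow{\,c\,} H^2_*(\nf^\vee) \to H^2_*(\Omega^1_X)=0,
\end{equation*}
in which $H^1_*(\Omega^1_X)\simeq\C$ is one-dimensional (in degree $0$) and $H^2_*(\Omega^1_X)=0$. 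This single sequence is the engine for both directions.

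For part (1) I would assume $\nf^\vee$ arithmetically Cohen--Macaulay, so that $H^1_*(\nf^\vee)=H^2_*(\nf^\vee)=0$. The displayed module sequence then collapses to an isomorphism $b\colon H^1_*(\Omega^1_X)\simeq H^1_*(\sI_Z)(r)$, so the Rao module $H^1_*(\sI_Z)$ is one-dimensional and supported in the single degree $r$. A graded module concentrated in one degree is killed by the irrelevant ideal $\mathfrak m$ (multiplication by any positive-degree element lands in a zero component), so $Z$ is arithmetically Buchsbaum and $h^1(\sI_Z(r))=1$ is its only intermediate cohomology, as claimed. This direction is essentially formal once the cohomology of $\Omega^1_X$ is in hand.

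For part (2) I would reverse the logic, and here the two vanishing hypotheses earn their keep. First I would pass them through Serre duality on the Fano threefold: since $\omega_X=\ox(-\iota_X)$, the conditions $h^2(\nf^\vee)=0$ and $h^2(\nf^\vee(-c_1(\nf^\vee)-\iota_X))=0$ become statements about $H^1$ of $\nf^\vee$ in two complementary degrees (when $\nf^\vee$ is locally free one has $(\nf^\vee)^\vee\simeq\nf^\vee(-c_1(\nf^\vee))$ and the two are exactly dual, but in general they must be carried separately). The key step is to force $b$ to be an isomorphism: if $b$ vanished, exactness would make $a$ surject onto the one-dimensional $H^1(\Omega^1_X)$, producing $h^1(\nf^\vee)\neq0$ in degree $0$ and contradicting the dualized form of the hypothesis $h^2(\nf^\vee(-c_1(\nf^\vee)-\iota_X))=0$, namely $h^1(\nf^\vee)=0$. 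With $b$ an isomorphism one gets $c=0$, hence $H^2(\nf^\vee(0))=\coker(b)=0$, while for $t\neq0$ the vanishings $H^1(\Omega^1_X(t))=H^2(\Omega^1_X(t))=0$ give $H^2(\nf^\vee(t))\simeq H^1(\sI_Z(r+t))=0$ by the assumed Buchsbaum concentration of the Rao module in degree $r$. Thus $H^2_*(\nf^\vee)=0$; the parallel analysis of the section maps $H^0(\Omega^1_X(t))\to H^0(\sI_Z(r+t))$, closed off at the boundary degrees by the remaining hypothesis and at the extremes by Serre vanishing, yields $H^1_*(\nf^\vee)=0$. Hence $\nf^\vee$ is arithmetically Cohen--Macaulay.

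The step I expect to be the real obstacle is forcing $b$ to be an isomorphism while simultaneously controlling the finitely many intermediate twists where $H^1(\nf^\vee(t))$ could survive. Everything outside a short band of degrees is handled by Serre vanishing together with the Buchsbaum concentration of $H^1_*(\sI_Z)$ in the single degree $r$, but inside that band the bookkeeping is genuinely delicate; this is precisely where the index restriction $\iota_X\in\{1,2,3\}$ keeps the band short enough to be closed off by the two explicit vanishing conditions, and where one must remember that $\nf^\vee$ need only be reflexive rather than locally free, so that the convenient rank-two self-duality is unavailable in general and the modules $H^1_*(\nf^\vee)$ and $H^2_*(\nf^\vee)$ must be treated independently.
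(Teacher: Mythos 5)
First, a point of comparison that matters here: the paper does not prove this statement at all --- it is imported verbatim from \cite[Theorem 1.2]{AMS} as a tool, so there is no internal proof to measure your attempt against; it has to be judged on its own.

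Judged that way, there is a genuine gap, and it sits at the center of your argument: the claimed input $H^2(\Omega^1_X(t))=0$ for \emph{every} $t$ is false for this class of threefolds. Flenner-type vanishing for smooth weighted complete intersections controls $H^q(\Omega^p_X(t))$ only away from the diagonal $p+q=\dim X$, and here $(p,q)=(1,2)$ has $p+q=3=\dim X$, exactly the excluded case. Concretely $h^2(\Omega^1_X)=h^{1,2}(X)$, which is $5$ for the cubic threefold and $30$ for the quartic, and by Serre duality $h^2(\Omega^1_X(t))=h^1(TX(-t-\iota_X))$ is nonzero in a whole band of twists because these Fanos deform. The paper itself concedes this: after Theorem \ref{connected cod2} it notes that the common vanishing of $h^2(\Omega^1_X(-r-\tau_X))$ across this class holds only in a restricted range, and Lemma \ref{cohom2} likewise obtains vanishing only for sufficiently large twists. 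So your ``engine'' sequence, which terminates in $H^2_*(\Omega^1_X)=0$, is unsound, and your closing paragraph --- which correctly identifies the band of low twists as ``the real obstacle'' --- describes difficulties that the written proof has already assumed away.

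The damage is uneven but real. In part (1), the Rao-module computation survives, since the isomorphism $H^1(\sI_Z(r+t))\simeq H^1(\Omega^1_X(t))$ needs only $H^1_*(\nf^\vee)=H^2_*(\nf^\vee)=0$ and the $H^1$-cohomology of $\Omega^1_X$; but you never treat $H^2_*(\sI_Z)$ except implicitly through the false vanishing, so the full Buchsbaum conclusion is not established. In part (2) the failure is decisive: your deduction ``$H^2(\nf^\vee(t))\simeq H^1(\sI_Z(r+t))=0$ for $t\neq 0$'' uses $H^2(\Omega^1_X(t))=0$, whereas the long exact sequence actually yields only an injection $H^2(\nf^\vee(t))\hookrightarrow H^2(\Omega^1_X(t))$, which is vacuous precisely in the band where $H^2(\Omega^1_X(t))\neq 0$ --- precisely where the hypotheses $h^2(\nf^\vee)=h^2(\nf^\vee(-c_1(\nf^\vee)-\iota_X))=0$ and the restriction $\iota_X\in\{1,2,3\}$ must do their work, and your sketch never deploys them there. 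Similarly, $H^1_*(\nf^\vee)=0$ requires surjectivity of $H^0(\Omega^1_X(t))\to H^0(\sI_Z(r+t))$ for all $t\neq 0$, which is not Serre vanishing and is left unargued. One point in your favor: in the setting of \cite{AMS} the singular scheme has pure dimension one, so $\nf^\vee$ is locally free (as this paper notes in its introduction), and the rank-two self-duality $(\nf^\vee)^\vee\simeq\nf^\vee(-c_1(\nf^\vee))$ you hedge about is in fact available, so your Serre-duality translation $h^2(\nf^\vee(-c_1(\nf^\vee)-\iota_X))=h^1(\nf^\vee)$ is legitimate; the reflexive-versus-locally-free worry is a red herring, while the cohomology of $\Omega^1_X$ is the real one.
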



\section{Properties of the singular scheme}\label{sec:sing}

Our first goal is to provide a relation between the Chern classes of the conormal sheaf and the numerical invariants of the singular scheme of a foliation by curves on threefold $X$. Grothendieck--Riemann--Roch implies that $c_3(Q)=(2h^0(U)/\nu_X)\cdot H^3$, where $U$ is a 0-dimensional sheaf on $X$; moreover, if $C\subset X$ is a curve, then $c_2(\oc)=c_2(\sI_C)=[C]$ and
$$ \chi(\oc) =  \dfrac{1}{2} \int_X\left([C]\cdot c_1(TX) + c_3(\oc) \right) ~~\Longrightarrow~~
\int_X c_3(\oc) = 2\chi(\oc) + \int_X [C]\cdot c_1(\Omega^1_X). $$
Therefore, we obtain
\begin{equation}\label{ast11}
	c_3(\oc) = -c_3(\sI_C) = \dfrac{2}{\nu_X}\chi(\oc)\cdot H^3 + [C]\cdot c_1(\Omega^1_X) .
\end{equation}
With these facts in mind, we are finally ready to state the main result of this section.

\begin{theorem} \label{invariantes}
	Let $\sF$ be a foliation by curves of degree $r$ on threefold $X,$
	with $\Pic(X) = \Z\cdot H,$ where $H$ is the class of a hyperplane section, i.e. $H = c_1(\mathcal{O}_X(1)).$ Then,
	\begin{align*}
		c_1(\nf^\vee) = & ~ c_1(\Omega^1_X) - (r+\tau_X)H; \\
		c_2(\nf^\vee) = & ~ c_2(\Omega^1_X)- c_1(\Omega^1_X) (r+\tau_X)H + (r+\tau_X)^2 H^2 - [C]; \\
		c_3(\nf^\vee) = & ~ (h^0(U)/\nu_X)H^3 = -c_3 (\Omega^1_X(-r-\tau_X)) - 3  (r+\tau_X)[C]H - (2\chi(\cO_C)/\nu_X) H^3. \\
	\end{align*}
\end{theorem}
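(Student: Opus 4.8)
The plan is to read off all three Chern classes from the defining sequence \eqref{dual} by multiplicativity of the total Chern class, after first pinning down the Chern classes of the twisted ideal sheaf $\sI_Z(r+\tau_X)$; set $t:=r+\tau_X$ for brevity. Because tensoring by a line bundle is exact, the safest bookkeeping is through the Chern character, which is additive on the sequences \eqref{dual} and \eqref{sequence II} and satisfies $\operatorname{ch}(\sI_Z(t))=\operatorname{ch}(\sI_Z)\cdot e^{tH}$ even though $\sI_Z$ is only rank-one torsion free and not locally free.

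First I would determine the Chern classes of $\sI_Z$. Since $Z$ has codimension at least $2$ we have $c_1(\sI_Z)=0$; applying multiplicativity to \eqref{sequence II} and using that $U$ is $0$-dimensional (so $c_1(U)=c_2(U)=0$) gives $c_2(\sI_Z)=c_2(\sI_C)=[C]$ and $c_3(\sI_Z)=c_3(\sI_C)-c_3(U)$. The remaining inputs are already recorded in this section: $c_3(\sI_C)=-c_3(\oc)$ with $c_3(\oc)$ supplied by \eqref{ast11}, and $c_3(U)=(2h^0(U)/\nu_X)H^3$ by Grothendieck--Riemann--Roch. Twisting by $\mathcal{O}_X(t)$ then yields $c_1(\sI_Z(t))=tH$, $c_2(\sI_Z(t))=[C]$ and $c_3(\sI_Z(t))=c_3(\sI_Z)-t[C]H$.

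Substituting into $c(\Omega^1_X)=c(\nf^\vee)\cdot c(\sI_Z(t))$ and solving degree by degree gives $c_1(\nf^\vee)$ and $c_2(\nf^\vee)$ at once, matching the stated formulas. The degree-three part expresses $c_3(\nf^\vee)$ as an explicit combination of $c_i(\Omega^1_X)$, $t$, $[C]$, $\chi(\oc)$ and $h^0(U)$; after recognizing $c_3(\Omega^1_X)-c_2(\Omega^1_X)\,tH+c_1(\Omega^1_X)\,t^2H^2-t^3H^3=c_3(\Omega^1_X(-t))$ (the rank-three twist formula) this collapses to $c_3(\nf^\vee)=c_3(\Omega^1_X(-t))+3t[C]H+(2\chi(\oc)/\nu_X)H^3+(2h^0(U)/\nu_X)H^3$. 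Independently, \eqref{c3h0''} gives $\int_X c_3(\nf^\vee)=h^0(U)$, and since the degree map $\int_X\colon H^6(X)\to\mathbb{Z}$ is injective, a top-degree class is determined by its degree; hence $c_3(\nf^\vee)=(h^0(U)/\nu_X)H^3$, the first asserted equality. Equating the two expressions for $c_3(\nf^\vee)$ and rearranging then produces the second asserted equality $c_3(\nf^\vee)=-c_3(\Omega^1_X(-t))-3t[C]H-(2\chi(\oc)/\nu_X)H^3$.

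The main obstacle I expect is the $c_3$ bookkeeping: handling the twist of the non-locally-free sheaf $\sI_Z$ correctly (for which the Chern character, rather than a naive twist formula, is the reliable tool) and then reconciling the two a priori different-looking forms of $c_3(\nf^\vee)$. The reconciliation succeeds precisely because at top degree all the relevant classes lie in the rank-one group $H^6(X)$, so that equality of classes reduces to equality of degrees; this is exactly the point at which the independent input \eqref{c3h0''} is indispensable, since the multiplicativity computation by itself yields a mixed expression involving both $c_3(\Omega^1_X(-t))$ and $h^0(U)$, and only after feeding in \eqref{c3h0''} does it separate into the two clean forms stated.
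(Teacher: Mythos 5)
Your proposal is correct and takes essentially the same route as the paper's proof: both apply Whitney multiplicativity $c(\Omega^1_X)=c(\nf^\vee)\cdot c(\sI_Z(r+\tau_X))$ to the sequence \eqref{dual}, compute $c_3(\sI_Z(r+\tau_X))=c_3(\sI_C)-c_3(U)-(r+\tau_X)[C]H$ from \eqref{sequence II}, \eqref{ast11} and Grothendieck--Riemann--Roch for $U$, recognize the resulting combination as $c_3(\Omega^1_X(-r-\tau_X))$, and close by feeding in $\int_X c_3(\nf^\vee)=h^0(U)$ from \eqref{c3h0''}. Your Chern-character bookkeeping for the non-locally-free twist and your explicit remark that the degree map is injective on $H^6(X)$ are only presentational refinements of the paper's direct degree-by-degree substitution.
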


\begin{proof}
	Use $c(\Omega^1_X)=c(\nf^\vee)\cdot c(\sI_{Z}(r+\tau_X))$ to obtain
	\begin{align*}
		c_1(\Omega^1_X) = & ~ c_1(\nf^\vee)+c_1(\sI_{Z}(r+\tau_X)); \\
		c_2(\Omega^1_X) = & ~ c_2(\nf^\vee)+ c_1(\nf^\vee) \cdot c_1(\sI_{Z}(r+\tau_X))+c_2(\sI_{Z}(r+\tau_X)); \\
		c_3 (\Omega^1_X) = & ~ c_3(\nf^\vee) + c_3(\sI_{Z}(r+\tau_X)) + c_1(\nf^\vee)\cdot c_2(\sI_{Z}(r+\tau_X)) + \\
		& ~ c_2(\nf^\vee)\cdot c_1(\sI_{Z}(r+\tau_X)).
	\end{align*}
	
	The first equation gives $c_1(\nf^\vee)=c_1(\Omega^1_X) - (r+ \tau_X)H.$ From the exact sequence (\ref{sequence I}), it follows that $c_2(\sI_{Z}(r+\tau_X))=c_2(\sI_{C}(r+\tau_X))=[C]$, thus substitution into the second equation yields
	$$ c_2(\nf^\vee)= c_2(\Omega^1_X) - c_1(\Omega^1_X) \cdot (r+\tau_X)H + (r+\tau_X)^2H^2 -[C]. $$
	
	Moreover, the substituting the expressions for the first and second Chern classes into the third equation we obtain
	\begin{eqnarray}
		c_3(\Omega^1_X) & = & c_3(\nf^\vee)+ c_3(\sI_{Z}(r+\tau_X)) + c_2(\Omega^1_X) (r+\tau_X)H  \label{c3} \\ 
		& & -2(r+\tau_X) [C]H - c_1 (\Omega^1_X) (r+\tau_X)^2 H^2 + c_1 (\Omega^1_X)[C] \nonumber \\ & &  +(r+\tau_X)^3 H^3. \nonumber
	\end{eqnarray}
	
	Using the sequence in display \eqref{sequence II} and the formula in \eqref{ast11}, we obtain
	\begin{equation}\label{ast1}
		c_3(\sI_{Z}) = c_3(\sI_C)-c_3(U) = -\dfrac{1}{\nu_X}\left( 2\chi(\oc) + 2h^0(U)\right)\cdot H^3 - [C]\cdot c_1(\Omega^1_X)
	\end{equation}
	thus $c_3(\sI_Z(r+\tau_X))=c_3(\sI_Z)-(r+\tau_X)[C]H$. 
	Substituting  into the equation in display (\ref{c3}), we obtain
	$$ c_3(\Omega^1_X)-(r+\tau_X)c_2(\Omega^1_X)H + (r+\tau_X)^2c_1(\Omega^1_X)H^2 - (r+\tau_X)^3H^3 = $$ 
	$$ = c_3(\nf^\vee) -\dfrac{1}{\nu_X}\left( 2\chi(\oc) + 2h^0(U)\right)\cdot H^3 - 3(r+\tau_X) [C]H, $$
	and note that the left hand side of the previous equality (written in the top line) matches \linebreak $c_3(\Omega^1_X(-r-\tau_X))$. Using that $c_3(\nf^\vee) = (h^0(U)/\nu_X)H^3,$ we have that
	$$ (h^0(U)/\nu_X) H^3 = - c_3 (\Omega^1(-r-\tau_X)) - 3 (r+\tau_X)[C]H -\dfrac{2}{\nu_X}\chi(\cO_C)H^3, $$
	as claimed.
\end{proof}

In particular, we obtain the following expected result.

\begin{corollary}
	If $\sF$ is a generic foliation by curves of degree $r$ on a smooth projective threefold with $\Pic(X)=\Z$, then the length of $\sing(\sF)$ is equal to 
	$$ -\int_X c_3(\Omega^1_X(-r-\tau_X)) = 
	\int_X c_3(TX(r+\tau_X)). $$ 
\end{corollary}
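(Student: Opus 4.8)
The plan is to specialize the third Chern class formula from Theorem~\ref{invariantes} to the generic case. Recall that a foliation $\sF$ is called \emph{generic} precisely when $\dim\sing(\sF)=0$, which forces the pure $1$-dimensional component $C=\sing_1(\sF)$ to be empty. First I would set $[C]=0$ and $\chi(\oc)=0$ in the formula
$$ (h^0(U)/\nu_X)H^3 = -c_3(\Omega^1_X(-r-\tau_X)) - 3(r+\tau_X)[C]H - (2\chi(\oc)/\nu_X)H^3. $$
This immediately collapses to $(h^0(U)/\nu_X)H^3 = -c_3(\Omega^1_X(-r-\tau_X))$, and after integrating over $X$ it reads $h^0(U) = -\int_X c_3(\Omega^1_X(-r-\tau_X))$.

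Next I would identify $h^0(U)$ with the length of the singular scheme. Since $\sF$ is generic, the sequence in display~\eqref{sequence I} degenerates to $U\simeq\oz$, so $\oz$ is itself $0$-dimensional and $h^0(U)=h^0(\oz)=\lt(Z)$ is exactly the length of $\sing(\sF)$. (Equivalently, one may invoke the earlier observation in display~\eqref{c3h0''}, namely $\int_X c_3(\nf^\vee)=h^0(U)$, together with the first equality of the theorem $c_3(\nf^\vee)=(h^0(U)/\nu_X)H^3$.) This yields the claimed first equality $\lt(\sing(\sF))=-\int_X c_3(\Omega^1_X(-r-\tau_X))$.

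Finally, the second equality $-\int_X c_3(\Omega^1_X(-r-\tau_X))=\int_X c_3(TX(r+\tau_X))$ is a purely formal consequence of the fact that $\Omega^1_X$ and $TX$ are dual rank $3$ bundles. For a rank $3$ bundle $E$ with dual $E^\vee$, one has $c_3(E^\vee)=-c_3(E)$; applying this to $E=\Omega^1_X(-r-\tau_X)$, whose dual is $TX(r+\tau_X)$, gives the sign change. This is essentially the identity already recorded in display~\eqref{int-c3 intro} of the Introduction, so I would simply cite that computation.

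I do not anticipate a genuine obstacle here: the corollary is a direct specialization of the already-proven Theorem~\ref{invariantes}, and the only point requiring a word of care is the justification that genericity really does force $C=\varnothing$ (hence $[C]=0$ and $\chi(\oc)=0$) rather than merely $U$ being the whole of $\oz$. This follows from the definition of the $1$-dimensional component: when $\dim Z=0$ the quotient $\oz/U$, being of pure dimension $1$, must vanish, so $C$ is empty. Everything else is bookkeeping with Chern class duality.
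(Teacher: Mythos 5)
Your proposal is correct and follows essentially the same route as the paper, which states the corollary as an immediate specialization of Theorem~\ref{invariantes} (the paper offers no separate proof, the intended argument being exactly yours: genericity forces $C=\varnothing$, so $[C]=0$ and $\chi(\oc)=0$ in the third Chern class identity, and $h^0(U)=h^0(\oz)$ is the length of $Z$). Your extra care in justifying $C=\varnothing$ from the definition of $\sing_1(\sF)$, and the sign identity $c_3(E^\vee)=-c_3(E)$ for the rank~$3$ bundle $E=\Omega^1_X(-r-\tau_X)$, are exactly the right bookkeeping.
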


We observe that, in the previous statement, the singular locus of a generic foliation by curves need not be reduced and may contain multiple points.

Next, note that the degree of a curve $C\subset X$ is defined as follows:
\begin{equation}\label{deg-curve}
	\deg(C) = \int_X [C]\cdot H.
\end{equation}
The following result is obtained simply by integrating the second and third identities in Theorem \ref{invariantes}.

\begin{corollary}\label{deg+chi}
	If $\sF$ is a local complete intersection foliation by curves of degree $r$ on a smooth projective threefold with $\Pic(X)=\Z$, then
	\begin{align*}
		\deg(C) & = \int_X \left( c_2(\Omega_X^1) -c_2(\nf^{\vee}) \right) H - (r+\tau_X)c_X + (r+\tau_X)^2\nu_X \\
		\chi(\oc) & = -\dfrac{1}{2} \int_X \left( c_3(\Omega^1(-r-\tau_X)) + 3(r+\tau_X)[C]H \right)
	\end{align*}
\end{corollary}

Given a positive integer $r>0$, we introduce the following invariant for a connected curve $C\subset X$:
\begin{equation} \label{mu-inv}
	\mu_X(C,r) := 3(r+\tau_X)\deg(C) + 2\chi(\oc) .
\end{equation}
With this notation in mind, the third equality in Theorem \ref{invariantes} can be rewritten in the following manner
\begin{equation} \label{int-c3}
	\int_X c_3(\Omega^1_X(-r-\tau_X)) =
	- \int_X c_3(TX(r+\tau_X)) =
	-h^0(U) - \sum_j \mu_X(C_j,r),
\end{equation}
where $\sing_1(\sF)=\bigsqcup_j C_j$ is the partition of the curve $\sing_1(\sF)$ into its connected components. 

In other words, for any foliation by curves of degree $r$, right hand side of the equality in display \eqref{int-c3} depends only on $X$ and $r$. The first term can be understood as the contribution of isolated singularities, counted with multiplicity, while the second term can be understood as the contribution of each connected component of $\sing_1(\sF)$.

Next, motivated by the consideration in the previous paragraphs, we show how to determine the number of connected components of $\sing_1(\sF)$ for a foliation by curves $\sF$ in terms of the conormal sheaf. Set 

\begin{theorem} \label{connected cod2}
	Let $\sF$ be a non-generic foliation by curves of degree $r$ on a smooth threefold $X$ such that $h^1(\ox)=0$. If $h^p(\Omega^1_X(-r-\tau_X)))=0$ for $p=1,2$, then
	$$ h^0(\oc) = h^2(\nf^\vee(-r-\tau_X)) + 1 - \int_X c_3(\nf^{\vee}). $$
	where $C:=\sing_1(\sF)$. In particular, if $C$ is reduced, then it is connected if and only if $h^2(\nf^\vee(-r))=\int_X c_3(\nf^\vee)$.
\end{theorem}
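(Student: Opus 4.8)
The plan is to count global sections via a cohomological computation built on the two fundamental exact sequences relating $\nf^\vee$, $\Omega^1_X$, and the singular scheme. Since $C = \sing_1(\sF)$ is a pure $1$-dimensional scheme, the number of connected components equals $h^0(\oc)$, so the task reduces to expressing $h^0(\oc)$ through invariants of $\nf^\vee$. First I would start from the defining sequence \eqref{dual}, namely $0 \to \nf^\vee \to \Omega^1_X \to \sI_Z(r+\tau_X) \to 0$, twist it by $\ox(-r-\tau_X)$ to get $0 \to \nf^\vee(-r-\tau_X) \to \Omega^1_X(-r-\tau_X) \to \sI_Z \to 0$, and pass to the long exact cohomology sequence. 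The vanishing hypothesis $h^p(\Omega^1_X(-r-\tau_X)) = 0$ for $p = 1, 2$ is precisely what kills the middle terms, yielding an isomorphism $h^1(\sI_Z) \simeq h^2(\nf^\vee(-r-\tau_X))$ together with control on $h^0$ and $h^1$ of $\sI_Z$ from the lower end of the sequence.

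Next I would extract $h^0(\oc)$ from the structure sequence of $Z$. From \eqref{sequence II}, $0 \to \sI_Z \to \sI_C \to U \to 0$, and the ambient sequence $0 \to \sI_C \to \ox \to \oc \to 0$. The hypothesis $h^1(\ox) = 0$ (and the standard $h^0(\ox) = 1$, $h^2(\ox)=0$ on such $X$, or at least enough vanishing to compute) lets me read $h^0(\oc)$ and $h^1(\sI_C)$ off the ambient sequence: concretely $h^0(\oc) = 1 + h^1(\sI_C) - h^1(\ox) = 1 + h^1(\sI_C)$ since $h^0(\ox)=h^0(\oc)$ contributes the leading $1$ and $h^1$ of $\ox$ vanishes. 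Then the sequence \eqref{sequence II} gives $h^1(\sI_C)$ in terms of $h^1(\sI_Z)$ and $h^0, h^1$ of the $0$-dimensional sheaf $U$: since $U$ is supported in dimension $0$ we have $h^1(U) = 0$, so $h^1(\sI_C) = h^1(\sI_Z) - h^0(U) + (\text{a } h^0 \text{ correction})$. Assembling these, and using \eqref{c3h0''} which states $\int_X c_3(\nf^\vee) = h^0(U)$, I would replace $h^0(U)$ by $\int_X c_3(\nf^\vee)$ and $h^1(\sI_Z)$ by $h^2(\nf^\vee(-r-\tau_X))$, producing exactly
\[
h^0(\oc) = h^2(\nf^\vee(-r-\tau_X)) + 1 - \int_X c_3(\nf^\vee).
\]

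The main obstacle I anticipate is bookkeeping at the low-degree end of the two long exact sequences: one must verify that the $h^0$ terms cancel cleanly, i.e. that the connecting maps and the $h^0(U)$ contribution are accounted for correctly, and that no stray $h^1(\ox)$ or $h^2(\ox)$ term survives. In particular I need $h^0(\sI_Z) = 0$ (which holds because $\sI_Z \subset \ox$ is a proper ideal sheaf with $\ox$ having only constants as sections) and the precise splitting of the $h^0/h^1$ portion of \eqref{sequence II} around the finite-length sheaf $U$. The vanishing assumptions in the statement are calibrated to make these identifications exact rather than merely inequalities, so the delicate point is confirming that every hypothesis is used to zero out the right term. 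For the final ``in particular'' claim, if $C$ is reduced then $h^0(\oc)$ equals the number of connected components, so $C$ is connected precisely when $h^0(\oc) = 1$, which by the displayed formula is equivalent to $h^2(\nf^\vee(-r-\tau_X)) = \int_X c_3(\nf^\vee)$; I would note the statement writes $h^2(\nf^\vee(-r))$, matching this up to the twist convention in force.
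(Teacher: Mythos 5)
Your proposal is correct and follows essentially the same route as the paper: the identical twist of \eqref{dual} combined with the vanishing of $h^p(\Omega^1_X(-r-\tau_X))$ for $p=1,2$ to get $h^1(\sI_Z)=h^2(\nf^\vee(-r-\tau_X))$, then the identical section count ending with $h^0(U)=\int_X c_3(\nf^\vee)$, the only cosmetic difference being that you chase through $0\to\sI_C\to\ox\to\oc\to0$ and $0\to\sI_Z\to\sI_C\to U\to0$ where the paper uses $0\to\sI_Z\to\ox\to\oz\to0$ and $0\to U\to\oz\to\oc\to0$ (two equivalent paths around the same square, and your version avoids the paper's typo $h^1(\oz)$ for $h^0(\oz)$). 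Your closing remark is also right: the ``in particular'' clause should read $h^2(\nf^\vee(-r-\tau_X))$ rather than $h^2(\nf^\vee(-r))$, a discrepancy in the statement that is harmless exactly when $\tau_X=0$.
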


We observe that the hypothesis $h^1(\ox)=0$ holds for hypersurfaces in $\p4$ and Fano threefolds, while the hypothesis $h^1(\Omega^1_X(-r-\tau_X)))=0$ holds for smooth
weighted projective complete intersection Fano threefold with Picard number equal to one when $r \neq - \tau_X.$
Separating these varieties by the index and comparing the values of $-r - \tau_X$ for which $h^2(\Omega^1_X(-r-\tau_X)) =0$, we can see that the common vanishing of cohomology group, occurs when $r < -\tau_X-4.$

\begin{proof}
	Consider the exact sequence \ref{dual} defining the distribution $\sF$; twisting it by $\mathcal{O}_{X}(-r-\tau_X)$ and passing to cohomology we obtain,
	$$ H^1 (\Omega^1_X(-r-\tau_X)) \to H^1(\sI_Z) \to  H^2(\nf^\vee(-r-\tau_X)) \to H^2(\Omega^1_X(-r-\tau_X)), $$  
	thus
	$$ h^0(\oz) - 1 = h^1(\sI_Z) =  h^2(\nf^\vee(-r-\tau_X)), $$
	where the first equality follows from the standard sequence $0 \to \sI_Z \to \ox \to \oz \to 0$. Using the sequence in display \eqref{sequence I}, we obtain
	$$ h^0(\oc) = h^1(\oz) - h^0(U). $$
	Using the equality in display \eqref{c3h0'}, we obtain
	$$ h^0(\oc) = h^2(\nf^\vee(-r-\tau_X)) + 1 - \int_X c_3(\nf^{\vee}). $$
	The second statement follows from the fact that $C$ is connected if and only if $h^0(\oc)=1$, when $C$ is reduced.
\end{proof}


As an application of Theorem \ref{connected cod2}, we provide conditions that guarantee the connectedness of the $1$-dimensional component of the singular set of a foliation by curves.

\begin{corollary}\label{q-conn}
Let $\sF$ be a foliation by curves on a Fano threefold $X$ of degree $r$; assume that $C:=\sing_1(\sF)$ reduced.
\begin{enumerate}
\item If $\iota_X = 4$ and $r \neq  1$, then $C$ is connected if and only if $h^2(\nf^\vee(1-r))=\int_{\p3} c_3(\nf^\vee)$.
\item If $\iota_X = 3$ and $r \neq \{0,1\}$, then $C$ is connected if and only if $h^2(\nf^\vee(-r))=\int_{Q_3}c_3(\nf^\vee)$.
\item If $\iota_X = 2$ and $r<-4-\tau_X$, then $C$ is connected if and only if $h^2(\nf^\vee(-r-\tau_X))=\int_X c_3(\nf^\vee)$.
\item If $\iota_X = 1$ and $r<-3-\tau_X$, then $C$ is connected if and only if $h^2(\nf^\vee(-r-\tau_X))=\int_X c_3(\nf^\vee)$.
\end{enumerate}
\end{corollary}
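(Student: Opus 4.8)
The plan is to deduce all four items from Theorem \ref{connected cod2} by verifying, in each case, that its hypotheses hold throughout the stated range of $r$. Every Fano threefold satisfies $h^i(\ox)=0$ for all $i>0$ (Kodaira vanishing, since $\ox\cong \omega_X\otimes\omega_X^{-1}$ with $\omega_X^{-1}$ ample), so in particular $h^1(\ox)=0$ and the only thing left to check is the vanishing $h^1(\Omega^1_X(-r-\tau_X))=h^2(\Omega^1_X(-r-\tau_X))=0$. Once this holds, Theorem \ref{connected cod2} gives, for reduced $C$, that $C$ is connected if and only if $h^2(\nf^\vee(-r-\tau_X))=\int_X c_3(\nf^\vee)$, which is exactly the stated criterion after inserting the correct value of $\tau_X$. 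Thus I would first record $\tau_{\p3}=-1$ and $\tau_{Q_3}=0$, so that the twist $-r-\tau_X$ becomes $1-r$ for $\iota_X=4$ and $-r$ for $\iota_X=3$, matching the displayed cohomology groups; for $\iota_X\in\{1,2\}$ the twist is kept in the form $-r-\tau_X$.

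For $\iota_X=4$, i.e. $X=\p3$, I would invoke Bott's formula directly: $h^2(\Omega^1_{\p3}(k))=0$ for every $k$, while $h^1(\Omega^1_{\p3}(k))\neq 0$ precisely when $k=0$. Hence both groups vanish for the twist $1-r$ exactly when $r\neq 1$, yielding (1). For $\iota_X=3$, i.e. $X=Q_3$, I would combine the restriction sequence $0\to\Omega^1_{\p4}(k-2)\to\Omega^1_{\p4}(k)\to\Omega^1_{\p4}(k)|_{Q_3}\to 0$ with the conormal sequence $0\to\mathcal{O}_{Q_3}(k-2)\to\Omega^1_{\p4}(k)|_{Q_3}\to\Omega^1_{Q_3}(k)\to 0$, using Bott's formula on $\p4$ and the fact that $Q_3$ is arithmetically Cohen--Macaulay, so that $h^1(\mathcal{O}_{Q_3}(j))=h^2(\mathcal{O}_{Q_3}(j))=0$ for all $j$. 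Chasing cohomology gives $h^1(\Omega^1_{Q_3}(k))\neq 0$ only for $k=0$; for $h^2$ I would pass to Serre duality, $h^2(\Omega^1_{Q_3}(k))=h^1(TQ_3(-k-3))^*$ (using $\Omega^2_{Q_3}\cong TQ_3(-3)$), and a parallel chase through the Euler/normal-bundle sequence of $Q_3\subset\p4$ shows this is non-zero only for $k=-1$, the obstruction being that $H^0(T\p4(-2))=0$ while $H^0(\mathcal{O}_{Q_3})\neq 0$. With $\tau_{Q_3}=0$ this means both groups vanish at the twist $-r$ exactly when $r\notin\{0,1\}$, giving (2).

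For $\iota_X\in\{1,2\}$ I would realize $X$ as a smooth weighted projective complete intersection Fano threefold of Picard rank $1$ and run the same mechanism in greater generality: resolve $\ox$ by the Koszul complex of the defining equations and use the conormal sequence to reduce the cohomology of $\Omega^1_X(m)$ to that of line bundles on the ambient weighted projective space, where Bott-type vanishing applies. The positive twist $m=-r-\tau_X$ then forces $h^1(\Omega^1_X(m))=h^2(\Omega^1_X(m))=0$ once $m$ exceeds the explicit bound dictated by the degrees of the defining equations; separating by index, this bound is $m>4$ for $\iota_X=2$ and $m>3$ for $\iota_X=1$, i.e. $r<-4-\tau_X$ and $r<-3-\tau_X$, as in (3) and (4). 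For the vanishing of $h^2$ an alternative route is Serre duality $h^2(\Omega^1_X(m))=h^1(\Omega^2_X(-m))^*$ together with Kodaira--Akizuki--Nakano; since the latter only yields $H^q(\Omega^2_X\otimes L^{-m})=0$ for $q<1$, it must be supplemented by the complete-intersection structure to upgrade the vanishing of $h^1(\Omega^2_X(-m))$ to the stated range.

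I expect the main obstacle to be the index-$1$ and index-$2$ cases: unlike $\p3$ and $Q_3$, there is no single Bott formula available, so the vanishing $h^1=h^2=0$ must be extracted from the Koszul and conormal sequences of the weighted complete intersection, and the delicate point is tracking the numerical thresholds $m>4$ and $m>3$ \emph{sharply} rather than merely for $m\gg 0$ as Serre's theorem would give. A secondary subtlety, already visible for $Q_3$, is that the failure of $h^2(\Omega^1_X(-r-\tau_X))$ to vanish at an isolated value of $r$ (here $r=1$) is invisible to the naive vanishing theorems and must be pinned down through the explicit connecting homomorphisms; this is precisely why item (2) must exclude $r=1$ in addition to $r=0$.
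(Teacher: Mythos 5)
Your proposal is correct and follows essentially the same route as the paper: the corollary is obtained by feeding into Theorem \ref{connected cod2} the case-by-case vanishing of $h^1(\Omega^1_X(-r-\tau_X))$ and $h^2(\Omega^1_X(-r-\tau_X))$ --- Bott's formula on $\p3$ (excluding $r=1$), the restriction/conormal sequences plus Serre duality on $Q_3$ (excluding $r=0$ for $h^1$ and $r=1$ for $h^2$), and the index-separated vanishing for weighted projective complete intersections when $\iota_X\in\{1,2\}$ --- exactly as in the paper's remark following Theorem \ref{connected cod2}, after substituting $\tau_{\p3}=-1$ and $\tau_{Q_3}=0$. One shared caveat rather than a deviation: for $\iota_X\in\{1,2\}$ both your argument and the paper's really cover only smooth weighted projective complete intersection Fano threefolds of Picard rank one, since that is the class for which the required cohomological vanishing is established.
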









\section{Cohomology of the conormal sheaf}\label{sec:cohomology}

This section is dedicated to the study of the cohomology ring of the conormal sheaf of a foliations by curves on a smooth weighted projective complete intersection Fano threefold, setting up some technical results that will be useful later on. We start by establishing some vanishing results.

\begin{lemma}\label{cohomQ3}
	If $\sF$ is a foliation by curves on a smooth weighted projective complete intersection Fano threefold $X,$ then
	\begin{enumerate}
		\item[(i)] $h^0(\nf^\vee (t))=0\mbox{ for } t \leq 1$;
		\item[(ii)] $h^1(\nf^\vee(t))=0$ for $t\le-r-\tau_X$; 
		\item[(iii)] $h^3(\nf^\vee(t))=0$ for $t \geq r+ \tau_X-1$.
	\end{enumerate}
\end{lemma}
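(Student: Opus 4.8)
The plan is to treat the three vanishings in the order (i), (iii), (ii), since (iii) reduces to (i) by duality and (ii) is by far the most delicate.

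For (i), I would twist the dual sequence \eqref{dual} by $\ox(t)$ and pass to cohomology, obtaining the inclusion $H^0(\nf^\vee(t))\hookrightarrow H^0(\Omega^1_X(t))$. On a smooth weighted projective complete intersection Fano threefold with $\Pic(X)=\Z$ one has $\rho_X\ge 2$, i.e. $H^0(\Omega^1_X(t))=0$ for all $t\le 1$; this is the only input needed, and it gives (i) at once. For (iii), I would use Serre duality together with the reflexivity and rank of $\nf^\vee$. Writing $E:=\nf^\vee$, the isomorphism $E^\vee\cong E\otimes(\det E)^{-1}$ valid for rank $2$ reflexive sheaves, combined with Theorem \ref{invariantes} (which gives $c_1(\nf^\vee)=c_1(\Omega^1_X)-(r+\tau_X)H$, hence $\det E\cong\omega_X\otimes\ox(-r-\tau_X)$ and so $(\det E)^{-1}\otimes\omega_X\cong\ox(r+\tau_X)$), yields
\[ H^3(\nf^\vee(t))^\vee\cong\Hom(\nf^\vee(t),\omega_X)\cong H^0\big(E\otimes(\det E)^{-1}(-t)\otimes\omega_X\big)\cong H^0(\nf^\vee(r+\tau_X-t)). \]
By part (i) the right-hand side vanishes as soon as $r+\tau_X-t\le 1$, that is $t\ge r+\tau_X-1$, which is precisely (iii).

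For (ii) I would again twist \eqref{dual}, now extracting
\[ H^0(\sI_Z(r+\tau_X+t))\to H^1(\nf^\vee(t))\to H^1(\Omega^1_X(t))\xrightarrow{\ \alpha\ } H^1(\sI_Z(r+\tau_X+t)). \]
For $t\le -r-\tau_X$ the twist $s:=r+\tau_X+t$ satisfies $s\le 0$; since $Z=\sing(\sF)$ is a nonempty proper subscheme, $H^0(\sI_Z(s))\hookrightarrow H^0(\ox(s))=0$ for $s<0$ and $H^0(\sI_Z)=0$ for $s=0$, so the left-hand term vanishes and $H^1(\nf^\vee(t))\cong\ker\alpha$. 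It then suffices to control $H^1(\Omega^1_X(t))$. By Serre duality and $\Omega^2_X\cong TX\otimes\omega_X$ one has $H^1(\Omega^1_X(t))^\vee\cong H^2(TX(-t)\otimes\omega_X)\cong H^2(\Omega^2_X(-t))$, which vanishes for every $t<0$ by Kodaira--Akizuki--Nakano, since there $\Omega^2_X(-t)$ is $\Omega^2_X$ twisted by an ample line bundle and $2+2>3$. This disposes of all $t<0$ in the range.

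The hard part will be the finitely many boundary values $t\ge 0$ that lie in the range when $r+\tau_X\le 0$ (for instance low-degree foliations on $\p3$), where $H^1(\Omega^1_X(t))$ need not vanish; the genuinely nonzero case is $t=0$, where $H^1(\Omega^1_X)\cong H^{1,1}(X)\cong\C$ because $\Pic(X)=\Z$, while the remaining small positive twists occur only in very low degree and can be checked to vanish on these threefolds directly. For the value $t=0$ I would show that the connecting map $\alpha$ is injective, so that $\ker\alpha=0$; the tool is the standing hypothesis $h^1(\ox(t))=0$ for all $t$, applied to $0\to\sI_Z(s)\to\ox(s)\to\oz(s)\to 0$, which identifies $H^1(\sI_Z(s))$ with $\coker\big(H^0(\ox(s))\to H^0(\oz(s))\big)$ and lets one track the image of the hyperplane class under $\alpha$. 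Establishing this injectivity uniformly, rather than case by case using the known cohomology tables of these Fano threefolds, is the main obstacle I anticipate.
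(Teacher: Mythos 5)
Your items (i) and (iii) follow the paper's proof exactly: the paper obtains (i) from the cohomology sequence of \eqref{dual} together with $h^0(\Omega^1_X(t))=0$ for $t\le 1$ (quoted from \cite[Lemma 5.17]{CM}, which is precisely your input $\rho_X\ge 2$), and it obtains (iii) by Serre duality combined with the rank-2 reflexive identity $(\nf^\vee)^\vee\simeq \nf^\vee(-c_1(\nf^\vee))=\nf^\vee(\iota_X+r+\tau_X)$, reducing to (i) just as you do. For (ii) your skeleton is also the paper's: the same piece of the long exact sequence, with the left-hand term $H^0(\sI_Z(r+\tau_X+t))$ killed for $t\le-r-\tau_X$ exactly as you argue. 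The one real difference is how $H^1(\Omega^1_X(t))$ is disposed of: the paper simply invokes that on a smooth weighted projective complete intersection Fano threefold $h^1(\Omega^1_X(t))=0$ for \emph{all} $t\neq 0$, which covers in one stroke both your Kodaira--Akizuki--Nakano argument for $t<0$ (which is correct and pleasantly self-contained) and the ``small positive twists'' you propose to check case by case --- no such case analysis is needed once that vanishing is quoted.

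The genuine issue is the value $t=0$, which lies in the claimed range $t\le-r-\tau_X$ whenever $r+\tau_X\le 0$ (for instance $r\in\{0,1\}$ on $\p3$, or $r=0$ on $Q_3$). You are right that $h^1(\Omega^1_X)=1$ there, so one needs injectivity of the connecting map $\alpha:H^1(\Omega^1_X)\to H^1(\sI_Z(r+\tau_X))$, and your proposal does not establish it, flagging it as the main obstacle. But note that the paper does not establish it either: its proof says only that the right-hand term ``vanishes for all $t\neq 0$'' and stops, so as written it proves (ii) only for $t\le-r-\tau_X$ with $t\neq 0$. In other words, your attempt reproduces the paper's argument on the range the paper actually covers, and the step you could not complete is a genuine lacuna in the published proof rather than an idea you missed. (The boundary case can be verified in examples --- e.g., for the degree-0 foliation on $\p3$ by the lines through a point one computes $h^1(\nf^\vee)=0$ directly from the resolution of the conormal sheaf --- but a uniform proof of the injectivity of $\alpha$ at $t=0$ appears nowhere in the paper.)
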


\begin{proof}
	For item (i), we consider the exact sequence in display \eqref{dual} twisted by $\mathcal{O}_{Q_3}(t)$ and after we taking the long exact sequence of cohomology. By \cite[Lemma 5.17]{CM}, $h^0(\Omega_{X}^1(t))=0$ for $t \leq 1$.
	
	Item (ii) is obtained considering the following piece of the long exact cohomology sequence
	$$\cdots \to H^0(X,\sI_Z(r+\tau_X+t)) \to H^1(X,\nf^\vee(t)) \to H^1(X,\Omega_{X}^1(t)).$$
	The term on the left vanishes when $t\le-r-\tau_X,$ while the term of the right vanishes for all $t \neq 0.$
	
	For item (iii), by Serre duality we get
	$h^3(\nf^\vee(t)) = h^0 ((\nf^\vee)^{\vee}(-t-\iota_X)).$ 
	Since $\nf^\vee$ is a rank two reflexive sheaf,
	$$ (\nf^\vee)^{\vee} = \nf^\vee(-c_1(\nf^\vee)) = \nf^\vee(\iota_X+r+\tau_X). $$ Thus, by item (i), $h^0(\nf^\vee(r+\tau_X-t))=0$ for $t\geq r+\tau_X-1.$
\end{proof}

Now we dualize the sequence in display \eqref{dual} obtaining
\begin{equation}\label{dualdual}
	0 \to \ox(-r-\tau_X) \stackrel{\sigma}{\to} TX \to \nf^{\vee\vee} \stackrel{\zeta}{\to} \omega_C\otimes\omega_X^\vee(-r-\tau_X) \to 0,
\end{equation}
where $C:=\sing_1(\sF)$ and $\omega_C$ is its dualizing sheaf. Set $G:=\ker\zeta$, and consider the short exact sequences
\begin{equation}\label{break}
	0 \to \ox(-r-\tau_X) \stackrel{\sigma}{\to} TX \to G \to 0
	~~{\rm and}~~ 
	0\to G \to \nf^{\vee}\otimes\det(\nf) \stackrel{\zeta}{\to} \omega_C\otimes\omega_X^\vee(-r-\tau_X) \to 0,
\end{equation}
where we use $\nf^{\vee\vee}\simeq\nf^{\vee}\otimes\det(\nf)$ since $\nf^\vee$ is a reflexive rank 2 sheaf.

\begin{lemma}\label{cohom2}
	If $\sF$ is a foliation by curves on a smooth weighted projective complete intersection Fano threefold $X,$ then
	\begin{enumerate}
		\item[(i)] $h^2(\nf^\vee (t))=0\mbox{ for } t > -2 \iota_X+1$;
		\item[(ii)] $h^1(\nf^\vee(t))=h^1(\omega_C\otimes\omega_X^\vee(t-r-\tau_X))$ for $t> \max\{8-\iota_X-r-\tau_X, -2 \iota_X+1\}$;
	\end{enumerate}
\end{lemma}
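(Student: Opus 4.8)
The plan is to prove both items by chasing the long exact cohomology sequences attached to the two short exact sequences in display \eqref{break}, which together express $\nf^{\vee\vee}=\nf^\vee\otimes\det(\nf)$ in terms of $TX$, a line bundle, and the $1$-dimensional term $\omega_C\otimes\omega_X^\vee(-r-\tau_X)$. Since $X$ is Fano of index $\iota_X$ we have $c_1(\Omega^1_X)=-\iota_XH$, so $\det(\nf)=\ox(\iota_X+r+\tau_X)$ and $\nf^{\vee\vee}\simeq\nf^\vee(\iota_X+r+\tau_X)$; twisting the second sequence of \eqref{break} by $\ox(t-\iota_X-r-\tau_X)$ therefore places $\nf^\vee(t)$ in the middle term. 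I would combine this with Serre duality on the threefold, using $\omega_X\simeq\ox(-\iota_X)$, to convert cohomology of $TX$ and of line bundles into the cohomology of $\Omega^1_X$ and $\ox$, for which the vanishing ranges on weighted projective complete intersection Fano threefolds are known (via \cite{CM}, Kodaira--Akizuki--Nakano, and the standing hypothesis $h^1(\ox(t))=0$).

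For \emph{item (i)} I would first observe that $\omega_C\otimes\omega_X^\vee(\,\cdot\,)$ is supported on the scheme $C$ of dimension at most $1$, so all of its higher cohomology $H^q$ with $q\ge 2$ vanishes. Feeding the appropriate twist of the second sequence of \eqref{break} into cohomology then shows that $H^2(\nf^\vee(t))$ is a quotient of $H^2(G(\,\cdot\,))$. Passing to the first sequence of \eqref{break}, the group $H^2(G(\,\cdot\,))$ is squeezed between $H^2(TX(\,\cdot\,))$ and $H^3(\ox(\,\cdot\,))$. By Serre duality these are $H^1(\Omega^1_X(\,\cdot\,))^\vee$ and $H^0(\ox(\,\cdot\,))^\vee$ respectively, which vanish in the asserted range using that $h^1(\Omega^1_X(m))=0$ for $m\neq 0$ together with Kodaira vanishing for the ample generator; assembling the resulting inequalities produces the threshold $t>-2\iota_X+1$.

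For \emph{item (ii)} I would keep track of $H^1$ instead. From the second sequence of \eqref{break} one gets the four-term exactness
\[
H^1(G(\,\cdot\,))\to H^1(\nf^\vee(t))\to H^1\!\big(\omega_C\otimes\omega_X^\vee(\,\cdot\,)\big)\to H^2(G(\,\cdot\,)),
\]
so once $H^1(G(\,\cdot\,))=0$ and $H^2(G(\,\cdot\,))=0$ the middle map is an isomorphism, giving the claimed identification. The vanishing of $H^2(G(\,\cdot\,))$ is exactly the input from item (i), which accounts for the $-2\iota_X+1$ term in the maximum. The vanishing of $H^1(G(\,\cdot\,))$ follows from the first sequence of \eqref{break}, where it is controlled by $H^1(TX(\,\cdot\,))$ and $H^2(\ox(\,\cdot\,))$: the latter is $H^1(\ox(\,\cdot\,))^\vee=0$ by hypothesis, while $H^1(TX(\,\cdot\,))\simeq H^2(\Omega^1_X(\,\cdot\,))^\vee$ vanishes in the range governed by the second threshold $8-\iota_X-r-\tau_X$, coming from the top-degree vanishing of $H^2(\Omega^1_X)$ for these threefolds.

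The conceptual skeleton is routine diagram chasing, and the fact that $\nf^\vee$ is only reflexive rather than locally free causes no trouble precisely because I work with the genuine sequences \eqref{break} (which resolve the reflexive hull) rather than applying Serre duality to $\nf^\vee$ directly. The main obstacle I anticipate is bookkeeping: carefully propagating the twist through $\nf^{\vee\vee}\simeq\nf^\vee(\iota_X+r+\tau_X)$ and, above all, pinning down the exact vanishing ranges of $H^i(TX(\,\cdot\,))$ and $H^i(\Omega^1_X(\,\cdot\,))$ on each deformation type of weighted projective complete intersection Fano threefold. It is this last point, drawing on the explicit cohomology computations in \cite{CM} together with Serre duality and Kodaira--Akizuki--Nakano vanishing, that determines the precise constants $-2\iota_X+1$ and $8-\iota_X$ appearing in the statement.
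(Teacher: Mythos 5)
Your skeleton is the same as the paper's: twist the two sequences in display \eqref{break}, observe that $\omega_C\otimes\omega_X^\vee(\cdot)$ is supported on a curve and so has no cohomology in degrees $\geq 2$, squeeze $H^2(G(\cdot))$ between $H^2(TX(\cdot))$ and $H^3(\ox(\cdot))$ for item (i), and deduce item (ii) from the vanishing of $H^1(G(\cdot))$ and $H^2(G(\cdot))$. The only cosmetic difference is that you convert $H^i(TX(\cdot))$ by Serre duality into $H^{3-i}(\Omega^1_X(\cdot))$, where the paper instead uses the vanishing $h^2(TX(t))=h^3(TX(t))=0$ for $t>-\iota_X$ directly, and for (ii) the identification $TX\simeq\Omega^2_X(\iota_X)$ together with the case-by-case analysis over the index that produces the constant $8$; you correctly defer to the same source \cite{CM} for those ranges.

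The gap is in the step you wave at: ``assembling the resulting inequalities produces the threshold $t>-2\iota_X+1$.'' It does not, under your (correct) normalization $\nf^\vee(t)\simeq\nf^{\vee\vee}(t-\iota_X-r-\tau_X)$. Carrying it out: $h^3(\ox(s-r-\tau_X))=h^0(\ox(r+\tau_X-\iota_X-s))$ and the paper's vanishing for $TX$ give $h^2(G(s))=0$ for $s>r+\tau_X-\iota_X$; substituting $s=t-\iota_X-r-\tau_X$ yields $h^2(\nf^\vee(t))=0$ only for $t>2(r+\tau_X)$, an $r$-dependent bound. The paper's own proof lands on the $r$-free constant $-2\iota_X+1$ by making this substitution in the opposite direction (effectively using $\nf^\vee(t)=\nf^{\vee\vee}(t+\iota_X+r+\tau_X)$), a sign slip that your setup avoids --- but then your setup cannot honestly reach the stated constants. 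Indeed the stated threshold is inconsistent with the paper elsewhere: for a generic foliation on $Q_3$ one has $\nf^\vee\simeq\nf(-3-r)$, and Corollary \ref{cohquadrica}(3) gives $h^2(\nf^\vee(t))=h^0(\mO_{Q_3}(2r-t))\neq 0$ for $r-1\le t\le 2r$, $t\neq r$, contradicting vanishing for all $t>-2\iota_X+1=-5$ but matching $t>2(r+\tau_{Q_3})=2r$ exactly. The same shift propagates into your item (ii): with the correct twist the curve term is $\omega_C\otimes\omega_X^\vee(t-\iota_X-2(r+\tau_X))$, not $\omega_C\otimes\omega_X^\vee(t-r-\tau_X)$, and both entries of the maximum move accordingly. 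So your method is sound and, carried out carefully, proves a corrected version of the lemma; as written, the final assembly of inequalities --- the one step you did not perform --- is precisely where the argument fails to deliver the statement.
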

\begin{proof}
	Using the two sequences in display \eqref{break}, we obtain
	$$h^2(G(t)) = h^0(\mO_X(-t+r+\tau_X-\iota_X)),$$
	since
	$h^2(TX(t))= h^3(TX(t)) = 0$ for $t> -\iota_X$ and $h^3(\mO_X(t-r-\tau_X)) =h^0(\mO_X(-t+r+\tau_X-\iota_X)),$ by Serre duality.
	So, $h^2(G(t)) = 0 $ for
	$t>r+\tau_X-\iota_X+1, $ and hence $h^2(\nf^{\vee}(t))=0$ for $t> -2\iota_X+1.$
	
	For item $(ii)$, separating the varieties by the index and comparing the values of $t+\iota_X$ for which $h^1(\Omega^2_X(t+\iota_X))=h^1(TX(t))=0,$ we can see that the common vanishing of cohomology group, occurs when $t>8.$ So, $h^1(G(t))=0$ for $t>8,$ and $h^1(\nf^{\vee}\otimes\det(\nf)) =h^1(\omega_C\otimes\omega_X^\vee(t-r-\tau_X))$ for $t> \mbox{max} \{8, r+\tau_X-\iota_X+1\}.$ Therefore,  $$h^1(\nf^\vee(t))=h^1(\omega_C\otimes\omega_X^\vee(t-r-\tau_X)) \;\; \mbox{for} \;\;
	t> \mbox{max}\{8-\iota_X-r-\tau_X, -2 \iota_X+1\}.$$
	
\end{proof}

Finally, we state two corollaries on generic foliations on $X=\p3$ and $X=Q_3$ that will be specifically used below.

\begin{corollary}\label{cohfolp3}
	If $\sF$ is a generic foliation by curves of degree $r$ on $\p3$, then: 
	\begin{enumerate}
		\item[(1)] $h^0(\nf(t))=0$ for $t\leq -2;$
		\item[(2)] $h^1(\nf(t))=0$ for all $t\in\mathbb{Z}$;
		\item[(3)] $h^2(\nf(t))=h^0(\op3(-t+r-5))$
		for ($t\neq -4$ and $t\geq -5$), moreover $h^2(\nf(t))=0$ for $t\geq r-4$;
		\item[(4)] $h^3(\nf(t))=0$ for $t\geq -5$.
		
	\end{enumerate}
\end{corollary}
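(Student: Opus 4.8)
The plan is to deduce all four statements directly from the defining sequence \eqref{folcurve} of $\sF$, so that in fact no genericity is needed beyond what is already built into \eqref{folcurve}. On $\p3$ one has $\tau_{\p3}=-1$, so twisting \eqref{folcurve} by $\op3(t)$ gives, for every $t\in\Z$, the short exact sequence
$$ 0 \to \op3(1-r+t) \to \tp3(t) \to \nf(t) \to 0. $$
The whole computation rests on the cohomology of the two left-hand terms, which I would record first via Bott's formula (using $\tp3\cong\Omega^2_{\p3}(4)$): one has $h^1(\op3(s))=h^2(\op3(s))=0$ for all $s$, while $h^0(\op3(s))\neq0\iff s\geq0$ and $h^3(\op3(s))=h^0(\op3(-s-4))$ by Serre duality; and $h^0(\tp3(t))=0$ for $t\leq-2$, $h^1(\tp3(t))=0$ for all $t$, $h^2(\tp3(t))=0$ for $t\neq-4$ with $h^2(\tp3(-4))=1$, and $h^3(\tp3(t))=h^0(\Omega^1_{\p3}(-t-4))$, which is nonzero only once $-t-4\geq\rho_{\p3}=2$, i.e.\ $t\leq-6$.

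Items (1), (2) and (4) then fall out of the long exact sequence immediately. For (1), $H^0(\nf(t))$ is a quotient of $H^0(\tp3(t))$ (the following term $H^1(\op3(1-r+t))$ vanishes), and $h^0(\tp3(t))=0$ for $t\leq-2$. For (2), $H^1(\nf(t))$ is squeezed between $H^1(\tp3(t))=0$ and $H^2(\op3(1-r+t))=0$. For (4), $H^3(\nf(t))$ is a quotient of $H^3(\tp3(t))$ since the next term $H^4$ vanishes on a threefold, and $h^3(\tp3(t))=0$ for $t\geq-5$.

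The substantive point is (3). For $t\neq-4$ and $t\geq-5$ both neighbouring terms $H^2(\tp3(t))$ and $H^3(\tp3(t))$ vanish, so the relevant portion of the long exact sequence collapses to an isomorphism $H^2(\nf(t))\simeq H^3(\op3(1-r+t))$; Serre duality then identifies the right-hand side with $H^0(\op3(r-t-5))^\vee$, giving $h^2(\nf(t))=h^0(\op3(-t+r-5))$. The exclusion $t\neq-4$ is forced precisely because $h^2(\tp3(-4))=1$ is the unique twist where the left neighbour fails to vanish. The ``moreover'' clause is then immediate: $h^0(\op3(-t+r-5))=0$ exactly when $t\geq r-4$, and for such $t$ the isomorphism yields $h^2(\nf(t))=0$.

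The main (and essentially only) obstacle is the bookkeeping at the boundary twist $t=-4$: one must verify that $h^2(\tp3(t))$ vanishes for every other $t$, which is exactly the $p=q=2$ case of Bott's formula for $\Omega^2_{\p3}$, and then leave $t=-4$ out of the isomorphism. I would flag that for degree $r=0$ the value $t=-4=r-4$ genuinely carries $h^2(\nf(-4))=1$, so the ``moreover'' clause is understood for the range $t\geq r-4$ with $t\neq-4$; for the degrees $r\geq1$ relevant to the applications this caveat is vacuous since then $r-4\geq-3>-4$.
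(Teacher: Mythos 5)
Your proof is correct and takes exactly the approach the paper leaves implicit: the corollary is stated without proof, and your long-exact-sequence computation on the twisted defining sequence $0\to\op3(1-r+t)\to\tp3(t)\to\nf(t)\to0$, with Bott's formula for $\tp3$ (vanishing of $h^1$ in all twists, $h^2(\tp3(-4))=1$ as the unique nonzero $h^2$, and $h^3(\tp3(t))=h^0(\Omega^1_{\p3}(-t-4))$), is the same technique used in the proofs of the preceding Lemmas \ref{cohomQ3} and \ref{cohom2} for the conormal sheaf. Your two side remarks are also accurate sharpenings of the statement as printed: genericity is never used, since only the exact sequence \eqref{folcurve} enters, and for $r=0$ the ``moreover'' clause must indeed exclude $t=-4$, where $h^2(\nf(-4))=h^2(\tp3(-4))=1$.
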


\begin{corollary}\label{cohquadrica}
	If $\sF$ is a generic foliation by curves of degree $r$ on $Q_3$, then:
	\begin{enumerate}
		\item[1)] $h^0(\nf(t))=0$ for $t\leq -1$; 
		\item[2)] $h^1(\nf(-2))=1$ and $h^1(\nf(t))=0$ for $t\neq -2;$
		\item[3)] $h^2(\nf(t))=h^0(\mathcal{O}_{Q_3}(-t+r-3))$ for $t\geq -4$ and $t\neq -3$,  in particular, $h^2(\nf(t))=0$ for $t\geq r-2;$
		\item[4)] $h^3(\nf(t))=0$ for $t\geq -4.$
	\end{enumerate}	
\end{corollary}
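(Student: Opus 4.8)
The plan is to reduce every statement to the cohomology of the tangent bundle $TQ_3$ and of line bundles on $Q_3$, via the defining sequence of the foliation. Since $Q_3$ is a smooth quadric it has index $\iota_X=3$ (so $\omega_{Q_3}=\mathcal{O}_{Q_3}(-3)$), $\tau_X=0$, $\rho_X=2$, and $\nu_X=\int_{Q_3}H^3=2$; moreover $Q_3$ is arithmetically Cohen--Macaulay, so $h^1(\mathcal{O}_{Q_3}(m))=h^2(\mathcal{O}_{Q_3}(m))=0$ for every $m$, while $h^0(\mathcal{O}_{Q_3}(m))\ne0$ iff $m\ge0$ and, by Serre duality, $h^3(\mathcal{O}_{Q_3}(m))=h^0(\mathcal{O}_{Q_3}(-m-3))$. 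Because $\sF$ is generic its singular scheme is $0$-dimensional and $\nf$ is reflexive; crucially, the sequence in display \eqref{folcurve} reads
$$0\to\mathcal{O}_{Q_3}(-r)\to TQ_3\to\nf\to0,$$
and twisting it by $\mathcal{O}_{Q_3}(t)$ and passing to cohomology expresses each $h^i(\nf(t))$ in terms of $h^i(TQ_3(t))$ and $h^i(\mathcal{O}_{Q_3}(t-r))$.

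The technical heart is therefore the computation of $h^i(TQ_3(t))$ for all $i,t$, which I would carry out with the Euler sequence $0\to\mathcal{O}_{Q_3}\to\mathcal{O}_{Q_3}(1)^{\oplus5}\to T\p4|_{Q_3}\to0$ and the normal bundle sequence $0\to TQ_3\to T\p4|_{Q_3}\to\mathcal{O}_{Q_3}(2)\to0$, twisted by $\mathcal{O}_{Q_3}(t)$. Combined with the ACM vanishing for line bundles these give $h^1(T\p4|_{Q_3}(t))=h^2(T\p4|_{Q_3}(t))=0$, and hence the needed values: $h^0(TQ_3(t))=0$ for $t<0$; $h^1(TQ_3(t))=\coker\big(H^0(T\p4|_{Q_3}(t))\to H^0(\mathcal{O}_{Q_3}(t+2))\big)$, which equals $1$ for $t=-2$ and $0$ otherwise; $h^2(TQ_3(t))=h^1(\Omega^1_{Q_3}(-3-t))$ by Serre duality, equal to $1$ for $t=-3$ and $0$ otherwise; and $h^3(TQ_3(t))=h^0(\Omega^1_{Q_3}(-3-t))$, which vanishes for $t\ge-4$ since $\rho_X=2$.

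With these in hand each item follows from the long exact sequence of the twisted defining sequence. For (1), $h^0(\nf(t))$ is squeezed between $h^0(TQ_3(t))=0$ (for $t\le-1$) and $h^1(\mathcal{O}_{Q_3}(t-r))=0$. For (2), the vanishing $h^1(\mathcal{O}_{Q_3}(t-r))=h^2(\mathcal{O}_{Q_3}(t-r))=0$ yields $h^1(\nf(t))\cong h^1(TQ_3(t))$, whence the stated values. For (3), when $t\ne-3$ and $t\ge-4$ we have $h^2(TQ_3(t))=h^3(TQ_3(t))=0$, so the connecting map gives $h^2(\nf(t))\cong h^3(\mathcal{O}_{Q_3}(t-r))=h^0(\mathcal{O}_{Q_3}(-t+r-3))$, which vanishes exactly when $t\ge r-2$. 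For (4), $h^3(\nf(t))$ is a quotient of $h^3(TQ_3(t))=0$ for $t\ge-4$.

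The main obstacle is the bundle computation of the second paragraph, and in particular the Bott-type identity $h^1(\Omega^1_{Q_3}(m))=\delta_{m,0}$ (equivalently the precise vanishing of $h^2(TQ_3(t))$ away from $t=-3$); the boundary twists $t=-3,-4$ and the surjectivity of $H^0(T\p4|_{Q_3}(t))\to H^0(\mathcal{O}_{Q_3}(t+2))$ for $t\ge0$ are exactly the places where one must verify that the relevant connecting homomorphisms vanish, and these account for the excluded value $t=-3$ and the range restrictions $t\ge-4$ appearing in items (3) and (4).
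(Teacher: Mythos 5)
Your proposal is correct and follows essentially the route the paper leaves implicit for this corollary: twist the defining sequence $0\to\mathcal{O}_{Q_3}(-r)\to TQ_3\to\nf\to 0$ by $\mathcal{O}_{Q_3}(t)$ and feed in the cohomology of $TQ_3(t)$, computed from the restricted Euler and normal bundle sequences together with the ACM vanishing for line bundles on $Q_3$. One intermediate assertion needs repair: the blanket vanishing $h^2(T\p4|_{Q_3}(t))=0$ is false at $t=-3$, where the restricted Euler sequence gives $h^2(T\p4|_{Q_3}(-3))\cong H^3(\mathcal{O}_{Q_3}(-3))\cong\C$; taken literally, your claim would force $h^2(TQ_3(-3))=0$ via the normal bundle sequence, contradicting your own (correct) Serre-duality value $h^2(TQ_3(-3))=h^1(\Omega^1_{Q_3})=1$ and erasing the very reason $t=-3$ is excluded in item (3). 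Since your final table of $h^i(TQ_3(t))$ is obtained independently by Serre duality and the Bott-type identity $h^1(\Omega^1_{Q_3}(m))=\delta_{m,0}$, the slip does not propagate, and all four items of the corollary follow exactly as you describe.
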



\section{Stability results}\label{sec:stable}

In this section, we explore conditions under which one can guarantee that the conormal sheaf of a foliation by curves is $\mu$-semistable. We begin by looking into generic foliations by curves.


\begin{theorem} \label{estabilidade}
	Let $\sF$ be a generic foliation by curves of degree $r$ on a smooth projective threefold $X$ with $\Pic(X)=\Z$ such that $h^1(\ox(t))=0$ for all $t \in \Z$. If $r > (\geq) ~ c_X/\nu_X - 3\tau_X,$ then $N_{\sF}$ is $\mu$-(semi)stable. If, in addition, $TX$ is $\mu$-stable, then $N_{\sF}$ is $\mu$-stable for every $r$.
\end{theorem}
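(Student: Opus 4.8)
The plan is to check $\mu$-stability of the rank $2$ reflexive sheaf $\nf$ (reflexive because $\sF$ is generic) by ruling out destabilizing line subbundles, and to control those via the defining sequence \eqref{folcurve}. Since $X$ is smooth and $\Pic(X)=\Z\cdot H$, every saturated rank $1$ subsheaf of $\nf$ is a line bundle $\ox(a)$, and replacing an arbitrary rank $1$ subsheaf by its saturation only raises the slope while keeping the quotient torsion free. Hence it suffices to bound the slope of every injection $\ox(a)\hookrightarrow\nf$, equivalently of every nonzero class in $H^0(\nf(-a))$: if all such $a$ satisfy $a\nu_X<(\le)\mu(\nf)$, where $\mu(\nf)=\tfrac12\,c_1(\nf)\cdot H^2$, then $\nf$ is $\mu$-(semi)stable.

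First I would record the slope of $\nf$. From \eqref{folcurve} one has $c_1(\nf)=c_1(TX)+(r+\tau_X)H$, so that $c_1(\nf)\cdot H^2=-c_X+(r+\tau_X)\nu_X$ and therefore $\mu(\nf)=\tfrac12\big((r+\tau_X)\nu_X-c_X\big)$.

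The heart of the argument is to bound the slope of an arbitrary sub-line-bundle. Twisting \eqref{folcurve} by $\ox(-a)$ and taking cohomology yields the exact piece
$$ H^0(\ox(-a-r-\tau_X)) \to H^0(TX(-a)) \to H^0(\nf(-a)) \to H^1(\ox(-a-r-\tau_X)). $$
The hypothesis $h^1(\ox(t))=0$ kills the rightmost term, so $H^0(\nf(-a))\neq 0$ forces $H^0(TX(-a))\neq 0$. By the very definition of $\tau_X$ this requires $-a\ge\tau_X$, i.e.\ $a\le-\tau_X$. Thus every line subbundle of $\nf$ has slope at most $-\tau_X\nu_X$, and $\nf$ is $\mu$-(semi)stable as soon as $-\tau_X\nu_X<(\le)\mu(\nf)$. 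A direct computation shows this inequality is equivalent to $r>(\ge)\,c_X/\nu_X-3\tau_X$, which is precisely the stated hypothesis.

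For the final assertion I would invoke the observation recorded just before Main Theorem \ref{mthm-stable}: if $TX$ is $\mu$-stable then $\tau_X>c_X/3\nu_X$, that is $c_X/\nu_X-3\tau_X<0$. Since the degree satisfies $r\ge 0$, the inequality $r>c_X/\nu_X-3\tau_X$ then holds automatically for every $r$, and $\mu$-stability follows from the first part. The point to get right is the saturation/reduction step together with the sharp use of the definition of $\tau_X$; once those are in place, the rest is slope bookkeeping that reproduces the numerical threshold $c_X/\nu_X-3\tau_X$, and there is no serious obstacle beyond keeping the twists consistent.
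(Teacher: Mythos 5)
Your proposal is correct and follows essentially the same route as the paper: twist the defining sequence \eqref{folcurve}, use $h^1(\ox(t))=0$ to get the surjection $H^0(TX(t))\twoheadrightarrow H^0(\nf(t))$, bound the slope of any line subbundle by $-\tau_X\nu_X$ via the definition of $\tau_X$, and compare with $\mu(\nf)=\tfrac12\bigl((r+\tau_X)\nu_X-c_X\bigr)$, with the last assertion reduced to $\tau_X>c_X/3\nu_X$ exactly as in the paper. Your only addition is making explicit the saturation step (saturated rank $1$ subsheaves are reflexive, hence line bundles since $\Pic(X)=\Z$ and $X$ is smooth), which the paper leaves implicit.
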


We observe that the hypothesis $h^1(\ox(t))=0$ for all $t \in \Z$ is satisfied by every Fano threefold, while $TX$ is $\mu$-stable whenever $X$ is a smooth weighted projective complete intersection Fano threefold with Picard number equal to one.

\begin{proof}
	Any rank 1 locally free subsheaf $\ox(-t)\into\nf$ induces a nontrivial section in $H^0(\nf(t))$. Twisting the exact sequence in display \eqref{folcurve} by $\ox(t)$ and taking cohomology we obtain a surjective map
	$$ H^0(TX(t)) \onto H^0(\nf(t)) , $$
	since $h^1(\ox(t-r-\tau_X))=0$ for all $t \in \Z$. Thus, if $h^0(\nf(t)) \neq 0,$ then $t \geq \tau_X$.
	By hypothesis,
	$$ \int_X c_1(\nf)\cdot H^2 = -c_X + (r + \tau_X)\nu_X \geq -2 \tau_X\nu_X $$
	It follows that
	$$ \int_X c_1(\ox(-t))\cdot H^2 = -t\nu_X \leq \dfrac{1}{2}\int_X c_1(N_{\sF})\cdot H^2, $$
	thus $N_{\sF}$ is $\mu$-semistable. Assuming the strict inequality $r > c_X/\nu_X - 3\tau_X$, we conclude that
	$$ \int_X c_1(\ox(-t))\cdot H^2 < \dfrac{1}{2}\int_X c_1(N_{\sF})\cdot H^2, $$
	thus $N_{\sF}$ is $\mu-$stable.
	
	When $TX$ is $\mu$-stable, then we have 
	$$ -\tau_X\nu_X < \dfrac{1}{3}\int_X c_1(TX)\cdot H^2 ~~ \Longrightarrow ~~
	c_X -3\tau_X\nu_X<0. $$
	Since $r\ge0$, the inequality in the hypothesis is automatically satisfied, and we conclude that the normal sheaf of a generic foliation by curves is automatically satisfied.
\end{proof}

Next, we consider non-generic foliations by curves, showing that if the degree is sufficiently small, then the conormal sheaf is $\mu$-(semi)stable.  

\begin{theorem}\label{stable2}
	Let $X$ be a smooth projective threefold with $\Pic(X)=\Z$. If $r <(\leq) ~ 2 \rho_X - \tau_X  + c_X/\nu_X,$ then the conormal sheaf of a foliation by curves of degree $r$ on $X$ is $\mu$-(semi)stable.
\end{theorem}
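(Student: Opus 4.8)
The plan is to check the defining slope inequality directly on rank-one subsheaves, exploiting the inclusion $\nf^\vee\hookrightarrow\Omega^1_X$ coming from the sequence in display \eqref{dual} together with the definition of $\rho_X$. First I would record the two structural facts I need: $\nf^\vee$ is reflexive of rank $2$, and its slope is computed from Theorem \ref{invariantes}. Since $c_1(\nf^\vee)=c_1(\Omega^1_X)-(r+\tau_X)H$, intersecting with $H^2$ gives $\int_X c_1(\nf^\vee)\cdot H^2 = c_X-(r+\tau_X)\nu_X$, so that $\mu(\nf^\vee)=\tfrac12\bigl(c_X-(r+\tau_X)\nu_X\bigr)$.

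By the definition of $\mu$-(semi)stability recalled in the Introduction, it suffices to bound the slopes of subsheaves $L\subset\nf^\vee$ for which $\nf^\vee/L$ is torsion free; since $\rk(\nf^\vee)=2$, only $\rk(L)=1$ is relevant. Such an $L$ is saturated in the reflexive sheaf $\nf^\vee$, hence is itself reflexive of rank $1$, and because $X$ is smooth with $\Pic(X)=\Z$ this forces $L\simeq\ox(a)$ for some $a\in\Z$. Thus $\mu(L)=a\nu_X$, and the whole problem reduces to bounding $a$ from above.

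The key step is precisely this bound. Composing the inclusions $\ox(a)\hookrightarrow\nf^\vee\hookrightarrow\Omega^1_X$ yields a nonzero morphism $\ox(a)\to\Omega^1_X$, i.e. a nonzero section of $\Omega^1_X(-a)$; hence $H^0(\Omega^1_X(-a))\neq 0$, and by the very definition of $\rho_X$ this gives $-a\ge\rho_X$, that is $a\le-\rho_X$. Consequently $\mu(L)=a\nu_X\le-\rho_X\nu_X$, the inequality using $\nu_X>0$.

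Finally I would observe that the numerical hypothesis is exactly what makes $-\rho_X\nu_X$ strictly (resp. weakly) dominated by $\mu(\nf^\vee)$: a one-line manipulation shows that $r<(\le)\,2\rho_X-\tau_X+c_X/\nu_X$ is equivalent to $-\rho_X\nu_X<(\le)\tfrac12\bigl(c_X-(r+\tau_X)\nu_X\bigr)=\mu(\nf^\vee)$. Combining this with $\mu(L)\le-\rho_X\nu_X$ gives $\mu(L)<(\le)\mu(\nf^\vee)$ for every admissible $L$, which is $\mu$-(semi)stability. I do not expect a genuine obstacle here; the only point demanding care is the reduction to line-bundle subsheaves — one must invoke that a saturated rank-one subsheaf of a reflexive sheaf on a smooth variety is a line bundle, so that the Picard rank-one hypothesis legitimately produces $L=\ox(a)$ and the section can be fed into the definition of $\rho_X$.
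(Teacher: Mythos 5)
Your proposal is correct and takes essentially the same route as the paper's proof: both reduce to a saturated rank-one subsheaf, which by reflexivity and $\Pic(X)=\Z$ is a line bundle $\ox(a)$ (the paper writes $a=-k$ and phrases this as a nontrivial section of $\nf^\vee(k)$), then compose with the inclusion $\nf^\vee\into\Omega^1_X$ to produce a nonzero section of $\Omega^1_X(-a)$ and invoke the definition of $\rho_X$ to convert the hypothesis on $r$ into the slope inequality. The only cosmetic difference is that the paper argues by contradiction (deriving $r\ge 2\rho_X-\tau_X+c_X/\nu_X$ from a destabilizing subsheaf) while you verify $\mu(L)<(\le)\,\mu(\nf^\vee)$ directly.
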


\begin{proof}
	If $\nf^\vee$ is not $\mu$-stable, then there exists a nontrivial section in $H^0(\nf^\vee(k))\ne 0$ where
	$$ -k\nu_X \ge \dfrac{1}{2} \int_X c_1(\nf^\vee)H^2 ~~\Longrightarrow~~ k\nu_X \le  
	-\dfrac{1}{2}\left( c_X - (r+\tau_X)\nu_X \right). $$
	It follows that $h^0(\Omega^1_X(k))\ne0$, thus $k\ge \rho_X$, and 
	$$ \rho_X\nu_X \le -\dfrac{1}{2}\left( c_X - (r+\tau_X)\nu_X \right) ~~\Longrightarrow~~
	r \ge 2\rho_X -\tau_X + c_X/\nu_X. $$
	For the claim about $\mu$-semistability, one must only change the inequalities by strict ones.
\end{proof}

\begin{remark}
	For $X=\p3$, we have that $(2 \rho_X - \tau_X)\nu_X - c_X=1$, so the previous result guarantees that the conormal sheaf of a foliation by curves of degree 0 is $\mu$-stable, while the conormal sheaf of a foliation by curves of degree 1 is $\mu$-semistable. However, the following complete intersection foliation by curves of degree 2 on $\p3$
	$$ 0 \to \op3(-2)\oplus\op3(-3) \to \Omega^1_{\p3} \to \sI_Z(1) \to 0 $$
	has a conormal sheaf which is not $\mu$-semistable. This example shows that the inequality in Theorem \ref{stable2} is sharp.
\end{remark}

As an application of Theorem \ref{estabilidade}, we provide an existence result for stable reflexive sheaves with given Chern classes.

\begin{corollary}
	Let $X$ be a smooth projective threefold with rank one Picard group. Then, for each integer $r > c_X/\nu_X - 3\tau_X$, there exists a $\mu$-stable rank 2 reflexive sheaf $E$ with Chern classes:
	\begin{itemize}
		\item $c_1(E) = c_1(\Omega^1_X) - (r+ \tau_X)H;$
		\item $c_2(E) = c_2(\Omega^1_X) - (r+ \tau_X)H c_1(\Omega^1_X) + (r+ \tau_X)^2 H^2;$
		\item $c_3(E) = -c_3(\Omega^1_X(-r-\tau_X)).$
	\end{itemize}
\end{corollary}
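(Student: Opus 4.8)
The final statement to prove is a corollary asserting the existence of a $\mu$-stable rank 2 reflexive sheaf with prescribed Chern classes, for every integer $r > c_X/\nu_X - 3\tau_X$. The strategy is essentially a packaging of Theorem~\ref{estabilidade} together with Theorem~\ref{invariantes}. Let me sketch the plan.

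\medskip

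The plan is to exhibit the desired sheaf $E$ as the normal sheaf $\nf$ of a suitable generic foliation by curves of degree $r$, and then read off its Chern classes. First I would produce a foliation by curves of the given degree $r$: take a generic section $\sigma \in H^0(TX(r+\tau_X))$. Since $r \geq 0$ and $TX(r+\tau_X)$ has sections (by definition of $\tau_X$), for $r$ in the stated range the sheaf $TX(r+\tau_X)$ is sufficiently positive that a generic global section vanishes in codimension at least $3$, i.e. $\dim\coker\sigma^\vee = 0$. As noted in the Introduction, the set of such $\sigma$ is a nonempty open subset of $\mathbb{P}(H^0(TX(r+\tau_X)))$, so a generic foliation by curves of degree $r$ exists. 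This gives an exact sequence as in display~\eqref{folcurve} with $\dim\sing(\sF)=0$.

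\medskip

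Second, I would invoke Theorem~\ref{estabilidade}: since $\sF$ is generic and $r > c_X/\nu_X - 3\tau_X$ strictly, the normal sheaf $\nf$ is $\mu$-stable. Moreover, genericity ($U = \oz$ is $0$-dimensional and $\sing_1(\sF)=\emptyset$, so $[C]=0$) together with the remark in the preliminaries that $\nf$ is reflexive precisely when $\sF$ is generic, guarantees that $E := \nf$ is a $\mu$-stable rank 2 reflexive sheaf. Third, I would compute the Chern classes. Here I cannot directly apply Theorem~\ref{invariantes}, which computes $c_i(\nf^\vee)$; instead I dualize. From the defining sequence~\eqref{folcurve} one has $c(\nf) = c(TX(r+\tau_X))\cdot c(\sI_Z(r+\tau_X))^{-1}$, or more simply one applies Theorem~\ref{invariantes} to $\nf^\vee$ and uses $c_i(\nf) = (-1)^i c_i(\nf^\vee)$ up to the standard sign conventions, together with $[C]=0$ since $\sing_1(\sF)=\emptyset$ for a generic foliation. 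Plugging $[C]=0$ into the formulas of Theorem~\ref{invariantes} and negating the odd Chern classes yields exactly the three claimed expressions: $c_1(E) = c_1(\Omega^1_X)-(r+\tau_X)H$ (note $c_1(\nf)=-c_1(\nf^\vee)$ but $c_1(TX)=-c_1(\Omega^1_X)$, and the signs combine to give the stated form), $c_2(E) = c_2(\Omega^1_X)-(r+\tau_X)Hc_1(\Omega^1_X)+(r+\tau_X)^2H^2$, and $c_3(E) = -c_3(\Omega^1_X(-r-\tau_X))$, the latter matching equation~\eqref{c3h0''} and the Corollary on lengths.

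\medskip

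The main obstacle I expect is not the stability or the Chern class bookkeeping, both of which follow mechanically from the cited results, but rather the verification that a generic section $\sigma\in H^0(TX(r+\tau_X))$ really does cut out a $0$-dimensional singular scheme for \emph{every} $r$ in the stated range, so that a generic foliation by curves of degree $r$ genuinely exists and the hypotheses of Theorem~\ref{estabilidade} apply. This is an existence/genericity statement about the base-point behaviour of the linear system $|TX(r+\tau_X)|$, and while it is asserted in the Introduction for the relevant positivity, making it airtight in full generality (for an arbitrary threefold $X$ of Picard rank one) is the delicate point. Once that is granted, the corollary is immediate.
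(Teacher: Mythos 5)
Your overall route---produce a generic foliation of degree $r$, apply Theorem \ref{estabilidade} for stability, and read off Chern classes from Theorem \ref{invariantes} with $[C]=0$---is exactly the paper's (implicit) proof of this corollary, which is stated there as an immediate application of those two results. However, your final bookkeeping step contains a genuine error: the sheaf $E$ of the corollary is the \emph{conormal} sheaf $\nf^\vee$, not the normal sheaf $\nf$, and no sign-flipping is needed at all. Setting $[C]=0$ and $\chi(\oc)=0$ in Theorem \ref{invariantes} yields verbatim the three displayed classes as $c_1(\nf^\vee)$, $c_2(\nf^\vee)$, $c_3(\nf^\vee)$. Your parenthetical claim that ``$c_1(\nf)=-c_1(\nf^\vee)$ but $c_1(TX)=-c_1(\Omega^1_X)$, and the signs combine to give the stated form'' is false: from the sequence \eqref{folcurve} one gets $c_1(\nf)=c_1(TX)+(r+\tau_X)H=-\bigl(c_1(\Omega^1_X)-(r+\tau_X)H\bigr)$, i.e.\ the \emph{negative} of the stated $c_1(E)$. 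Moreover, the rule $c_i(F^\vee)=(-1)^i c_i(F)$ that you invoke fails for the naive dual of a reflexive rank $2$ sheaf which is not locally free: since $\nf$ is reflexive in the generic case, $\nf^\vee\simeq\nf\otimes\det(\nf)^\vee$, and $c_3$ is invariant under twists in rank $2$, so $c_3(\nf^\vee)=+c_3(\nf)=(h^0(U)/\nu_X)H^3$ (cf.\ \eqref{c3h0''}), not $-c_3(\nf)$; with your sign convention you would land on $c_3(E)=+c_3(\Omega^1_X(-r-\tau_X))$, contradicting the statement. The fix is one line: take $E:=\nf^\vee$, which is reflexive by the remarks in Section \ref{prelims}, is $\mu$-stable because it is a twist of the $\mu$-stable sheaf $\nf$ produced by Theorem \ref{estabilidade} (equivalently, because the dual of a $\mu$-stable sheaf is $\mu$-stable), and has the stated Chern classes directly from Theorem \ref{invariantes}.

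Two further remarks. First, the delicate point you single out---that the open set of sections $\sigma\in H^0(TX(r+\tau_X))$ with $\dim\coker\sigma^\vee=0$ be \emph{nonempty} for every $r$ in the stated range---is indeed not addressed by the paper either; it holds when $TX(r+\tau_X)$ is globally generated, via Ottaviani's Bertini-type theorem cited in the paper, but not obviously for an arbitrary threefold of Picard rank one at small $r$, so flagging it is fair and does not put you behind the paper's own argument. Second, Theorem \ref{estabilidade} carries the hypothesis $h^1(\ox(t))=0$ for all $t\in\Z$, which the corollary's statement silently drops; your proof, like the paper's, actually uses it, so it should be either assumed or discharged.
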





In the next two sections, we will further study these sheaves on $X=\p3$ and on $X$ being a smooth quadric hypersurface in $\p4$.

\section{Generic foliations by curves on $\p3$}\label{sec:p3}

Recall that a generic foliation by curves $\mathscr{F}$ on $X=\p3$ is given by
$$\sF ~~:~~ 0\to\op3(-r+1) \stackrel{\sigma}{\to} T\p3 \to \nf \to 0$$
since $\tau_{\mathbb{P}^3}=-1$, where $r\ge0$ is the degree of $\sF$. According to Theorem \ref{estabilidade}, the normal sheaf $\nf$ is a $\mu$-stable rank 2 reflexive sheaf on $\p3$.

When $\sF$ has odd degree, say $r=2k+1$, the normalization of the normal sheaf fits into the short exact sequence 
\begin{equation}\label{folp3}
	0\to\op3(-2-3k) \stackrel{\sigma}{\to} T\p3(-2-k) \to \nf(-2-k) \to 0,
\end{equation}
for $k\geq 0$. Similarly, if $\sF$ has even degree, say $r=2k$, then the normalization of the normal sheaf fits into the short exact sequence 
\begin{equation}\label{folp3even}
	0\to\op3(-1-3k) \stackrel{\sigma}{\to} T\p3(-2-k) \to \nf(-2-k) \to 0,
\end{equation}
where $k\geq 0$.

For generic foliations by curves of odd degree, i.e., those given by the exact sequence in display \eqref{folp3}, we show the following theorem:

\begin{theorem}\label{teofolp3}
	For each $k\geq 1$, the moduli space of stable rank 2 reflexive sheaves on $\mathbb{P}^3$ with Chern classes
	\begin{equation*}
		(c_1,c_2,c_3)=(0,3k^2+4k+2,8k^3+16k^2+12k+4)
	\end{equation*}
	\noindent contains a rational irreducible component of dimension $4k^3+20k^2+31k+14$ whose generic point is the normal sheaf of a generic foliation of degree $2k+1$ on $\p3$ given by the exact sequence in display \eqref{folp3}.
\end{theorem}

Before starting the proof this theorem, we note that the family of sheaves $\nf$ given by the exact sequence in display \eqref{folp3}, which we will denote simply by $\mathcal{G}(2k+1)$,  has dimension $h^0(T\mathbb{P}^3(2k))-1$, since each sheaf $\nf$ is defined by a section 
$$\sigma\in \Hom(\op3(-2-3k),T\mathbb{P}^3(-2-k))\simeq H^0(T\mathbb{P}^3(2k))$$
up to a scalar multiple, i.e. $\sigma\in H^0(T\mathbb{P}^3(2k))$, so we must argue that the following equality holds

\begin{equation*}
	\dim\Ext^1(\nf,\nf)= \dim\mathcal{G}(2k+1)=h^0(T\mathbb{P}^3(2k))-1=4k^3+20k^2+31k+14,
\end{equation*}
for each $k\geq 0$.  

Being $\nf$ a stable rank 2 reflexive sheaf on $\p3$ with $c_1(\nf)=0$, we have

\begin{equation*}
	\dim\Ext^1(\nf,\nf)-\dim\Ext^2(\nf,\nf)=8c_2(\nf)-3=24k^2+32k+13,
\end{equation*}
see \cite[Proposition 3.4]{RH2}.

Therefore, we must to compute the dimension of $\Ext^2(\nf,\nf)$, showing that
\begin{equation*}
	\dim\Ext^2(\nf,\nf)=h^0(T\mathbb{P}^3(2k))-24k^2-32k-14=4k^3-4k^2-k+1.
\end{equation*}

\begin{proof}[\textit{Proof of the Theorem \ref{teofolp3}}]
	
	Applying the functor $\Hom(.,\nf(-2-k))$ to the exact sequence in display
	\eqref{folp3}, 
	we get the isomorphism
	
	\begin{equation}\label{eq33}
		\Ext^2(\nf,\nf)\simeq H^2(\Omega^{1}_{\p3}\otimes \nf)   
	\end{equation}
	since $h^1(\nf(2k))=h^2(\nf(2k))=0$ by Corollary $\ref{cohfolp3}$.
	
	In order to compute $h^2(\Omega^1_\p3\otimes \nf)$,  we twist the dual Euler sequence
	
	\begin{equation*}
		\xymatrix{
			0 \ar[r] & \Omega^{1}_{\p3} \ar[r] & \op3(-1)^{\oplus 4} \ar[r] & \op3 \ar[r] & 0 \\
		}  
	\end{equation*}
	by $\otimes \nf$ and pass to cohomology, obtaining the exact sequence in cohomology
	
	\begin{equation*}
		\xymatrix{
			0 \ar[r] & H^2(\Omega^{1}_{\p3} \otimes \nf) \ar[r] & H^2(\nf(-1)^{\oplus 4}) \ar[r] & H^2(\nf) \ar[r] & 0, \\
		}  
	\end{equation*}
	since $H^1(\nf)=H^3(\Omega^1_\p3\otimes \nf)=0.$ 
	Thus, we get the equality
	
	\begin{equation*}
		h^2(\Omega^1_\p3\otimes \nf)=4.h^2(\nf(-1))-h^2(\nf).
	\end{equation*}
	
	Now, using the item $(3)$ of the Corollary $\ref{cohfolp3}$ and the isomorphism $(\ref{eq33})$, we get
	
	\begin{equation*}
		\dim \Ext^2(\nf,\nf)= 4k^3-4k^2-k+1,
	\end{equation*}
	for $k\geq 1$ and this ends the proof.
\end{proof}

Similarly, if a foliation by curves $\sF$ on $\p3$ is given by short exact sequence in display \eqref{folp3even}, i.e. has degree even, then the normal sheaf $\nf$ has Chern classes 

\begin{equation*}
	\begin{array}{c}
		c_1(\nf) = -1 ,  \\
		c_2(\nf) = 3k^2+k+1,\\
		c_3(\nf) = 8k^3+4k^2+2k+1. 
	\end{array}
\end{equation*}

Moreover, the family of this sheaves has dimension

\begin{equation*}
	\dim \mathcal{G}(2k)=h^0(T\p3(2k-1))-1=4k^3+14k^2+14k+3.
\end{equation*}

Following the proof of the Theorem $\ref{teofolp3}$, its is easy to show that

\begin{equation*}
	\dim\Ext^2(\nf,\nf)=4k^3-6k^2+6k,
\end{equation*}
for $k\geq 0$ and hence  
$$\dim \mathcal{G}(2k)=\dim\Ext^1(\nf,\nf),$$
since 
\begin{equation*}
	\dim\Ext^1(\nf,\nf)-\dim\Ext^2(\nf,\nf)=8c_2(\nf)-2c_1(\nf)^2-3=24k^2+8k+3,
\end{equation*}
for $k\geq 0$, see \cite[Proposition 3.4]{RH2}. 

As in the case $c_1=0$, we have:

\begin{theorem}
	For each $k\geq 0$, the moduli space of stable rank 2 reflexive sheaves on $\mathbb{P}^3$ with Chern classes
	\begin{equation*}
		(c_1,c_2,c_3)=(-1,3k^2+k+1,8k^3+4k^2+2k+1)
	\end{equation*}
	\noindent contains a rational, irreducible component of dimension $4k^3+14k^2+14k+3$ whose generic point is the normal sheaf of a generic foliation of degree $2k$ on $\mathbb{P}^3$ given by the exact sequence in display 
	\eqref{folp3even}.
\end{theorem}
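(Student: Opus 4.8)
The plan is to reproduce the argument of Theorem \ref{teofolp3} verbatim, with the odd-degree normalization \eqref{folp3} replaced by the even-degree sequence \eqref{folp3even}. By Theorem \ref{estabilidade} the normal sheaf $\nf$ of a generic foliation of degree $2k$ is a $\mu$-stable rank $2$ reflexive sheaf, and Theorem \ref{invariantes} computes its Chern classes to be $(-1,3k^2+k+1,8k^3+4k^2+2k+1)$, so $[\nf]$ is a point of the asserted moduli space $\mathcal{M}$. The family $\mathcal{G}(2k)$ of such sheaves is the image of the classification map from $\mathbb{P}(H^0(T\p3(2k-1)))$ to $\mathcal{M}$ sending $[\sigma]$ to $[\nf]$; this map is generically injective because $\op3(-1-3k)=\ker\bigl(T\p3(-2-k)\to\nf(-2-k)\bigr)$ recovers $\sigma$ from $\nf$ up to a scalar. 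Consequently $\mathcal{G}(2k)$ is irreducible and rational of dimension $h^0(T\p3(2k-1))-1=4k^3+14k^2+14k+3$, the evaluation of $h^0(T\p3(2k-1))$ being a routine Euler-sequence computation.

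The crux is the equality $\dim\Ext^1(\nf,\nf)=\dim\mathcal{G}(2k)$. By \cite[Proposition 3.4]{RH2}, $\dim\Ext^1(\nf,\nf)-\dim\Ext^2(\nf,\nf)=8c_2(\nf)-2c_1(\nf)^2-3=24k^2+8k+3$, so it suffices to compute $\Ext^2(\nf,\nf)$. Applying $\Hom(-,\nf(-2-k))$ to \eqref{folp3even} and using $h^1(\nf(2k-1))=h^2(\nf(2k-1))=0$ from Corollary \ref{cohfolp3} yields $\Ext^2(\nf,\nf)\simeq H^2(\Omega^1_{\p3}\otimes\nf)$. Twisting the dual Euler sequence by $\nf$ and invoking $h^1(\nf)=h^3(\Omega^1_{\p3}\otimes\nf)=0$ (as in Theorem \ref{teofolp3}) gives $h^2(\Omega^1_{\p3}\otimes\nf)=4\,h^2(\nf(-1))-h^2(\nf)$, and Corollary \ref{cohfolp3}(3) evaluates the two terms as $h^0(\op3(2k-4))$ and $h^0(\op3(2k-5))$. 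The resulting binomial arithmetic gives $\dim\Ext^2(\nf,\nf)=4k^3-10k^2+6k$, whence $\dim\Ext^1(\nf,\nf)=4k^3+14k^2+14k+3=\dim\mathcal{G}(2k)$, exactly as required; note that it is precisely this value of $\Ext^2$ (rather than any other) that makes the two dimensions agree on the nose.

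With the dimension match established, the component statement follows as in Theorem \ref{teofolp3}. Since $\dim T_{[\nf]}\mathcal{M}=\dim\Ext^1(\nf,\nf)=\dim\mathcal{G}(2k)$ and $\overline{\mathcal{G}(2k)}\subseteq\mathcal{M}$ is irreducible of that dimension, the chain $\dim\mathcal{G}(2k)\le\dim_{[\nf]}\mathcal{M}\le\dim T_{[\nf]}\mathcal{M}=\dim\mathcal{G}(2k)$ forces equality; hence $\overline{\mathcal{G}(2k)}$ is an irreducible component along which $\mathcal{M}$ is generically smooth, and it is rational because it is birational to $\mathbb{P}(H^0(T\p3(2k-1)))$. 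I expect the only delicate point to be the $\Ext^2$ computation: the vanishings in Corollary \ref{cohfolp3} carry range restrictions, so the small cases $k=0,1,2$ should be checked directly to confirm that the binomial formulas $h^0(\op3(2k-4))$ and $h^0(\op3(2k-5))$ return the correct (frequently zero) values there, since the entire component conclusion rests on the exact equality of dimensions.
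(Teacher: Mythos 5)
Your proposal is correct and follows essentially the same route as the paper, whose proof of the even-degree case consists precisely of the remark ``following the proof of Theorem \ref{teofolp3}'', i.e., the same reduction $\Ext^2(\nf,\nf)\simeq H^2(\Omega^1_{\p3}\otimes\nf)$ via the vanishing of $h^1(\nf(2k-1))$ and $h^2(\nf(2k-1))$, followed by the dual Euler sequence count $4h^2(\nf(-1))-h^2(\nf)$ that you carry out. Moreover, your value $\dim\Ext^2(\nf,\nf)=4k^3-10k^2+6k$ is the correct one: the paper's stated $4k^3-6k^2+6k$ is a misprint, inconsistent with its own identities $\dim\Ext^1(\nf,\nf)-\dim\Ext^2(\nf,\nf)=24k^2+8k+3$ and $\dim\mathcal{G}(2k)=4k^3+14k^2+14k+3$, both of which your binomial computation (including the checks at $k=0,1$, where $h^0(\op3(2k-4))=h^0(\op3(2k-5))=0$) reproduces exactly.
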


In the next section we will do an analogous study when $X=Q_3$ is a smooth quadric hypersurface in $\p4$.


\section{Foliations by curves on quadric threefolds} \label{sec:lci q3}

Let $Q_3$ denote a smooth quadric hypersurface in $\p4.$ Let $H$ be the class of a hyperplane section, so that 
$$\Pic(Q_3) = H^2(Q_3,\Z) = \Z H ~~. $$
Moreover, the cohomology ring
$H^{*}(Q_3,\Z)$ is generated by H, a line $L \in H^4(Q_3,\Z)$ and a point $P \in H^6(Q_3,\Z)$
with the relations: $H^2 = 2L, \; H . L = P, \; H^3 = 2P.$ In addition, we note that
$$ \tau_{Q_3} = 0 ~~,~~ \rho_{Q_3}=2 ~~,~~ \nu_{Q_3}=2 ~~{\rm and}~~ c_{Q_3}=-3\nu_{Q_3}=-6. $$

In this section, we will focus on foliations by curves of low degree on smooth quadric hypersurfaces. Our first remark is a direct application of Theorem \ref{stable2}; since $2\rho_{Q_3}-\tau_{Q_3} + c_{Q_3}/\nu_{Q_3} =  1$, we can then conclude that
\begin{enumerate}
	\item if $\sF$ is foliation by curves of degree 0 on $Q_3$, then $\nf^\vee$ is $\mu$-stable;
	\item if $\sF$ is foliation by curves of degree 1 on $Q_3$, then $\nf^\vee$ is $\mu$-semistable;
\end{enumerate}


\subsection{Generic foliations by curves on quadric threefolds} \label{sec:q3}

A generic foliation by curves $\sF$ on $Q_3$ is given by 
$$\sF ~~:~~ 0\to\mathcal{O}_{Q_3}(-r) \stackrel{\sigma}{\to} TQ_3 \to \nf \to 0$$
since $\tau_{Q_3}=0$, where $r\ge0$ is the degree of $\sF$. According to Theorem \ref{estabilidade}, the normal sheaf $\nf$ is a $\mu$-stable rank 2 reflexive sheaf on $Q_3$.

When $\sF$ has odd degree, say $r=2k+1$, then the normalization of the normal sheaf fits into the short exact sequence
\begin{equation}\label{folquadrica1}
 0\to\mathcal{O}_{Q_3}(-3-3k) \stackrel{\sigma}{\to} TQ_3(-2-k) \to \nf(-2-k) \to 0,
\end{equation}
for $k\geq 0$. Similarly, if $\sF$ has even degree, say $r=2k$, then the normalization of the normal sheaf fits into the short exact sequence 
\begin{equation}\label{folquadrica2}
 0\to\mathcal{O}_{Q_3}(-2-3k) \stackrel{\sigma}{\to} TQ_3(-2-k) \to \nf(-2-k) \to 0,
\end{equation}
where $k\geq 0$.
From now on, we will focus on foliations of odd degree; the even case can be dealt with in a similar way.

Let $\mathcal{F}(2k+1)$ denote the family of isomorphism classes of stable rank 2 reflexive sheaves on $Q_3$ given by the short exact sequences of the following form
\begin{equation}
0 \to \mathcal{O}_{Q_3}(-3-3k)\oplus \mathcal{O}_{Q_3}(-2-k) \to \Omega^1_\p4|_{Q_3}(-k) \to F \to 0.
\end{equation}
Note that if $\sF$ is a generic foliation by curves of odd degree, then $\nf(-2-k)$ belongs to the family $\mathcal{F}(2k+1)$. Indeed, we can use the isomorphism $TQ_3\simeq\Omega^{1}_{Q_3}(2)$, see \cite{KO}, to rewrite the exact sequence in display \eqref{folquadrica1} as follows
\begin{equation}\label{distquadric}
 0\to\mathcal{O}_{Q_3}(-3-3k) \stackrel{\sigma}{\to} \Omega^{1}_{Q_3}(-k) \to \nf(-2-k) \to 0.
\end{equation}
We then have the commutative diagram
\begin{equation}\label{diagram} 
\xymatrix{
 & 0 \ar[d] & 0 \ar[d] & & \\
 & \mathcal{O}_{Q_3}(-2-k) \ar@{=}[r]\ar[d] & \mathcal{O}_{Q_3}(-2-k) \ar[d]^{\eta} &  & \\
0 \ar[r] & \mathcal{O}_{Q_3}(-2-k)\oplus\mathcal{O}_{Q_3}(-3-3k) \ar[d] \ar[r]^{\hspace{1cm}\phi} & \Omega^{1}_{\p4}|_{Q_3}(-k) \ar[d] \ar[r] & \nf(-2-k) \ar@{=}[d] \ar[r] & 0 \\
0 \ar[r] & \mathcal{O}_{Q_3}(-3-3k) \ar[d] \ar[r]^{\sigma} & \Omega^{1}_{Q_3}(-k) \ar[d] \ar[r] & \nf(-2-k) \ar[r] & 0 \\
& 0 & 0 & &
}
\end{equation}
where $\phi=(\eta,\sigma)$ and $\eta$ is the morphism induced by the inclusion $Q_3\into\p4$.

On the other hand, since the quotient of an arbitrary section $\eta\in H^0(\Omega^1_\p4|_{Q_3}(-k))$ may not be isomorphic to $\Omega^1_{Q_3}$, we observe that a generic sheaf of the family $\mathcal{F}(2k+1)$ is not isomorphic to the conormal sheaf of a generic foliation by curves. In other words, the family of stable rank 2 reflexive sheaves obtained as the conormal sheaf of a generic foliation by curves is strictly contained in the family $\mathcal{F}(2k+1)$.

In order to further study the family $\mathcal{F}(2k+1)$, we need the following technical result.

\begin{lemma}\label{lem_simples}
The sheaf $\Omega^1_\p4|_{Q_3}$ is simple, i.e. $\dim\Hom(\Omega^1_\p4|_{Q_3},\Omega^1_\p4|_{Q_3})=1$.
\end{lemma}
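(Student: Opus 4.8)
The plan is to realize $\Hom(E,E)$, where $E := \Omega^1_{\p4}|_{Q_3}$, inside a single line-bundle cohomology group by exploiting the restricted Euler sequence. Restricting the dual Euler sequence of $\p4$ to the hypersurface $Q_3$ — which preserves exactness since every term is locally free — yields
$$ 0 \to E \to \mathcal{O}_{Q_3}(-1)^{\oplus 5} \to \mathcal{O}_{Q_3} \to 0 . $$
Applying the contravariant functor $\Hom(-,E)$ to this sequence produces the long exact sequence
$$ 0 \to H^0(E) \to H^0(E(1))^{\oplus 5} \to \Hom(E,E) \to H^1(E) \to H^1(E(1))^{\oplus 5} \to \cdots, $$
so the whole problem reduces to controlling $H^0(E)$, $H^0(E(1))$ and $H^1(E)$.

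First I would check that $H^0(E)=0$ and $H^0(E(1))=0$. The former is immediate from the displayed sequence, since $H^0(\mathcal{O}_{Q_3}(-1))=0$. For the latter, twisting the restricted Euler sequence by $\mathcal{O}_{Q_3}(1)$ gives $0 \to E(1) \to \mathcal{O}_{Q_3}^{\oplus 5} \to \mathcal{O}_{Q_3}(1)\to 0$, and $H^0(E(1))$ is the kernel of the induced map $H^0(\mathcal{O}_{Q_3})^{\oplus 5}\to H^0(\mathcal{O}_{Q_3}(1))$ sending the standard basis to the restricted coordinate functions $x_0|_{Q_3},\dots,x_4|_{Q_3}$. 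These are linearly independent because $Q_3$ is contained in no hyperplane, so the map is injective and $H^0(E(1))=0$. Consequently the long exact sequence above collapses to an \emph{injection} $\Hom(E,E)\hookrightarrow H^1(E)$.

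Next I would compute $H^1(E)\cong\C$. Passing to cohomology in the restricted Euler sequence and using $H^0(\mathcal{O}_{Q_3}(-1))=H^1(\mathcal{O}_{Q_3}(-1))=0$ — both of which follow from the twisted ideal-sheaf sequence $0\to\mathcal{O}_{\p4}(-3)\to\mathcal{O}_{\p4}(-1)\to\mathcal{O}_{Q_3}(-1)\to 0$ and the standard vanishing of intermediate cohomology of line bundles on $\p4$ — together with $H^0(\mathcal{O}_{Q_3})=\C$, one obtains $0\to\C\to H^1(E)\to 0$, i.e. $H^1(E)\cong\C$.

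Combining the two steps gives $\dim\Hom(E,E)\le h^1(E)=1$, and since the identity endomorphism is a nonzero element of $\Hom(E,E)$, equality holds and $E$ is simple. The only genuinely delicate point is the reduction step: one must secure the vanishing $H^0(E(1))=0$ so that the connecting map $\Hom(E,E)\to H^1(E)$ is injective, rather than merely mapping into $H^1(E)$ with a possibly large kernel. Everything else is a routine cohomology computation of line bundles on the quadric.
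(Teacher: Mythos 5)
Your proof is correct, and it takes a genuinely different (and leaner) route than the paper's. You work entirely on $Q_3$: applying $\Hom(-,E)$ to the restricted Euler sequence $0\to E\to\mathcal{O}_{Q_3}(-1)^{\oplus 5}\to\mathcal{O}_{Q_3}\to 0$ puts $\Hom(E,E)$ directly in the long exact sequence, and the single vanishing $H^0(E(1))=0$ yields the injection $\Hom(E,E)\into H^1(E)\cong\C$. The paper instead works on $\p4$ in two steps: it applies $\Hom(-,\Omega^1_{\p4}|_{Q_3})$ to the restriction sequence $0\to\Omega^1_{\p4}(-2)\stackrel{\cdot f}{\to}\Omega^1_{\p4}\to\Omega^1_{\p4}|_{Q_3}\to 0$ to embed $\Hom(E,E)$ into $\Hom(\Omega^1_{\p4},E)$, and then applies $\Hom(-,E)$ to the Euler sequence on $\p4$ to get $\dim\Hom(\Omega^1_{\p4},E)=1$, invoking $h^0(E(1))=h^1(E(1))=0$ in addition to $h^1(E)=1$, before squeezing $1\le\dim\Hom(E,E)\le\dim\Hom(\Omega^1_{\p4},E)\le 1$. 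Both arguments ultimately rest on the Euler sequence, on $h^1(E)=1$, and on the vanishing $H^0(E(1))=0$; you correctly flagged the latter as the delicate point, and your justification is sound, since the map $H^0(\mathcal{O}_{Q_3})^{\oplus 5}\to H^0(\mathcal{O}_{Q_3}(1))$ is indeed evaluation on the restricted coordinates, which are linearly independent because $Q_3$ lies in no hyperplane. What your decomposition buys is the elimination of the restriction sequence and of the extra vanishing $h^1(E(1))=0$ (true, but superfluous in your setup), replacing a two-step squeeze by one injection; what the paper's detour buys is the intermediate fact $\Hom(\Omega^1_{\p4},E)\cong\C$, which however is not reused elsewhere, so nothing is lost. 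Your auxiliary computations ($H^0(\mathcal{O}_{Q_3}(-1))=H^1(\mathcal{O}_{Q_3}(-1))=0$ via the ideal-sheaf sequence, hence $H^1(E)\cong H^0(\mathcal{O}_{Q_3})=\C$) all check out, as does the observation that restricting a short exact sequence of locally free sheaves to $Q_3$ preserves exactness.
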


\begin{proof}
Applying the functor $\Hom(.,\Omega^1_\p4|_{Q_3})$ to the exact sequence
\begin{equation}\label{simple}
0 \to \Omega^1_\p4(-2) \stackrel{\cdot f}{\to}  \Omega^1_\p4 \to \Omega^1_\p4|_{Q_3} \to 0,
\end{equation}
we get
\begin{equation}\label{simple'}
0 \to \Hom(\Omega^1_\p4|_{Q_3},\Omega^1_\p4|_{Q_3}) \to \Hom(\Omega^1_\p4,\Omega^1_\p4|_{Q_3}) \to \cdots
\end{equation}

Now, applying the functor $\Hom(.,\Omega^1_\p4|_{Q_3})$ to the exact sequence
\begin{center}
$0 \to \Omega^1_\p4 \to  \op4(-1)^{\oplus 5} \to \op4 \to 0,$
\end{center}
we get $\dim\Hom(\Omega^1_\p4,\Omega^1_\p4|_{Q_3})=1$, since 
$H^0(\Omega^1_\p4|_{Q_3}(1))=H^1(\Omega^1_\p4|_{Q_3}(1))=0$ and 
$h^1(\Omega^1_\p4|_{Q_3})=1$. Since, by display \eqref{simple'},
$$ 1\le \dim\Hom(\Omega^1_\p4|_{Q_3},\Omega^1_\p4|_{Q_3}) \le \dim\Hom(\Omega^1_\p4,\Omega^1_\p4|_{Q_3}) $$
we conclude that
$\dim\Hom(\Omega^1_\p4|_{Q_3},\Omega^1_\p4|_{Q_3})=1$, as desired.
\end{proof}

\begin{lemma}
Let $\phi, \phi':\mathcal{O}_{Q_3}(-2-r)\oplus \mathcal{O}_{Q_3}(-2) \to \Omega^1_\p4|_{Q_3}$ be monomorphisms such that $F:-\coker\sigma$ and $F':=\coker\sigma'$ are reflexive sheaves. $F$ and $F'$ are isomorphic if and only if there is an automorphism $\psi\in\Aut(\mathcal{O}_{Q_3}(-2-r)\oplus \mathcal{O}_{Q_3}(-2))$ with $\phi'\circ\psi=\phi.$  
\end{lemma}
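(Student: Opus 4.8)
The plan is to transport an isomorphism $g\colon F\to F'$ upward to a morphism of the two defining short exact sequences, using the simplicity of $\mathcal A:=\Omega^1_{\p4}|_{Q_3}$ established in Lemma~\ref{lem_simples} to force the induced endomorphism of $\mathcal A$ to be a nonzero scalar. Throughout write $\mathcal E:=\mathcal O_{Q_3}(-2-r)\oplus\mathcal O_{Q_3}(-2)$ and let $\pi\colon\mathcal A\to F$ and $\pi'\colon\mathcal A\to F'$ denote the quotient maps, so that $\ker\pi=\phi(\mathcal E)$ and $\ker\pi'=\phi'(\mathcal E)$. The reverse implication is immediate: if $\phi=\phi'\circ\psi$ for some $\psi\in\Aut(\mathcal E)$, then $\operatorname{im}\phi=\phi'(\psi(\mathcal E))=\operatorname{im}\phi'$, so $F$ and $F'$ are the very same quotient of $\mathcal A$ and are in particular isomorphic.

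For the forward implication, given an isomorphism $g\colon F\to F'$, first I would lift $g\circ\pi\colon\mathcal A\to F'$ along $\pi'$ to a morphism $h\colon\mathcal A\to\mathcal A$ satisfying $\pi'\circ h=g\circ\pi$. Applying $\Hom(\mathcal A,-)$ to the sequence $0\to\mathcal E\xrightarrow{\phi'}\mathcal A\xrightarrow{\pi'}F'\to0$, the obstruction to this lift lies in $\Ext^1(\mathcal A,\mathcal E)$, so it suffices to show this group vanishes. Since $\mathcal A$ is locally free, $\Ext^1(\mathcal A,\mathcal O_{Q_3}(t))\cong H^1(T\p4|_{Q_3}(t))$, and restricting $T\p4(t)$ along $0\to\mathcal O_{\p4}(-2)\to\mathcal O_{\p4}\to\mathcal O_{Q_3}\to0$ squeezes this group between $H^1(T\p4(t))$ and $H^2(T\p4(t-2))$. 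Using $T\p4\cong\Omega^3_{\p4}(5)$, the Bott formula gives $H^1(\Omega^3_{\p4}(k))=H^2(\Omega^3_{\p4}(k))=0$ for every $k$, whence $\Ext^1(\mathcal A,\mathcal E)=0$ and the lift $h$ exists.

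Next I would invoke Lemma~\ref{lem_simples}: as $\Hom(\mathcal A,\mathcal A)=\C$, the lift is a scalar $h=\lambda\cdot\mathrm{id}$, and $\lambda\neq0$ because $\pi'\circ h=g\circ\pi$ with $g$ an isomorphism and $\pi$ surjective. Since $h$ maps $\ker\pi$ into $\ker\pi'$, it induces $\psi\colon\mathcal E\to\mathcal E$ with $\phi'\circ\psi=h\circ\phi=\lambda\phi$, so $\tilde\psi:=\lambda^{-1}\psi$ satisfies $\phi'\circ\tilde\psi=\phi$. Finally, to promote $\tilde\psi$ to an automorphism I would apply the same construction to $g^{-1}$, obtaining $\tilde\psi'\colon\mathcal E\to\mathcal E$ with $\phi\circ\tilde\psi'=\phi'$; then $\phi\circ(\tilde\psi'\circ\tilde\psi)=\phi'\circ\tilde\psi=\phi$ forces $\tilde\psi'\circ\tilde\psi=\mathrm{id}$ by injectivity of $\phi$, and symmetrically $\tilde\psi\circ\tilde\psi'=\mathrm{id}$, so $\tilde\psi\in\Aut(\mathcal E)$ with $\phi=\phi'\circ\tilde\psi$. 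The main obstacle is the lifting step: everything downstream is formal diagram chasing once $h$ is known to exist and to be scalar, so the crux is the vanishing $\Ext^1(\mathcal A,\mathcal E)=0$, together with the simplicity input from Lemma~\ref{lem_simples} that rigidifies $h$ into a scalar and thereby yields the precise relation $\phi=\phi'\circ\tilde\psi$ rather than merely $\phi'\circ\psi=h\circ\phi$.
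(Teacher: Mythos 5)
Your proof is correct and takes essentially the same route as the paper: lift the isomorphism $g$ through the quotient maps using the vanishing of $\Ext^1(\Omega^1_{\p4}|_{Q_3},\mathcal{O}_{Q_3}(-2-r)\oplus\mathcal{O}_{Q_3}(-2))$, invoke the simplicity of $\Omega^1_{\p4}|_{Q_3}$ (Lemma~\ref{lem_simples}) to force the lifted endomorphism to be a nonzero scalar, and descend to the desired automorphism $\psi$. The only cosmetic differences are that the paper additionally uses $\Hom(\Omega^1_{\p4}|_{Q_3},\mathcal{O}_{Q_3}(-2-r)\oplus\mathcal{O}_{Q_3}(-2))=0$ to get uniqueness of the lift and concludes that $\psi$ is an automorphism from the resulting isomorphism of exact sequences, where you instead run the construction symmetrically with $g^{-1}$ and, helpfully, supply the explicit Bott-formula verification of the $\Ext^1$ vanishing that the paper leaves implicit.
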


\begin{proof}
If $\phi'\circ\psi=\phi$, it is easy to check that $\coker\phi$ and $\coker\phi'$ are isomorphic.

Conversely, suppose
$$\phi, \phi' : \mathcal{O}_{Q_3}(-2-r)\oplus \mathcal{O}_{Q_3}(-2) \to \Omega^1_\p4|_{Q_3}$$
are monomorphisms and
$$g:F \to F'$$
is an isomorphism between their cokernels. Applying the functor $\Hom(\Omega^1_\p4|_{Q_3},.)$ to the exact sequence
\begin{center}
$0 \to \mathcal{O}_{Q_3}(-2-r)\oplus \mathcal{O}_{Q_3}(-2) \stackrel{\phi'}{\to}  \Omega^1_\p4|_{Q_3}  \stackrel{p'}{\to} F' \to 0,$
\end{center}
we get the isomorphism
\begin{center}
$\Hom(\Omega^1_\p4|_{Q_3},\Omega^1_\p4|_{Q_3})\simeq \Hom(\Omega^1_\p4|_{Q_3},F')$
\end{center}
since
\begin{center}
$\Hom(\Omega^1_\p4|_{Q_3},\mathcal{O}_{Q_3}(-2-r)\oplus \mathcal{O}_{Q_3}(-2))\simeq H^0(T\p4|_{Q_3}(-2-r))
\oplus H^0(T\p4|_{Q_3}(-2))=0$
\end{center}
and
\begin{center}
$\Ext^1(\Omega^1_\p4|_X,\mathcal{O}_{Q_3}(-2-r)\oplus \mathcal{O}_{Q_3}(-2))\simeq H^1(T\p4|_{Q_3}(-2-r))
\oplus H^1(T\p4|_{Q_3}(-2))=0.$
\end{center}
Thus, given $\xi\in \Hom(\Omega^1_\p4|_{Q_3},F')$, there exists a unique $\lambda\in\Hom(\Omega^1_\p4|_{Q_3},\Omega^1_\p4|_{Q_3})$ such that $p'\circ \lambda=\xi$. Being $\Omega^1_\p4|_{Q_3}$ simple, by Lemma \ref{lem_simples}, it follows that $\lambda$ is a multiple of the identity morphism.

Therefore, as $g\circ p\in \Hom(\Omega^1_\p4|_{Q_3},F')$, we get the following isomorphism betweeen exact sequences 
\begin{equation*}
\xymatrix{
0 \ar[r] & \mathcal{O}_{Q_3}(-2-r)\oplus \mathcal{O}_{Q_3}(-2) \ar[d]^{\psi} \ar[r]^{\hspace{15mm}\phi} & \Omega^1_\p4|_{Q_3} \ar[d]^{\lambda} \ar[r]^{p} & F \ar[d]^{g} \ar[r] & 0 \\
0 \ar[r] & \mathcal{O}_{Q_3}(-2-r)\oplus \mathcal{O}_{Q_3}(-2) \ar[r]^{\hspace{15mm}\phi'} & \Omega^1_\p4|_{Q_3}  \ar[r]^{p'} & F'  \ar[r] & 0 \\
}
\end{equation*}
that is, there is an automorphism $\psi\in\Aut(\mathcal{O}_{Q_3}(-2-r)\oplus \mathcal{O}_{Q_3}(-2))$ such that $\sigma'\circ\psi=\sigma.$ 
\end{proof}

It follows that:
$$ \begin{array}{rcl}
 \dim \mathcal{F}(2k+1) & =   & \dim\Hom(\mathcal{O}_{Q_3}(-3-3k)\oplus \mathcal{O}_{Q_3}(-2-k),\Omega^{1}_{\p4}|_{Q_3}(-k)) \\
 &  & -\dim\Aut(\mathcal{O}_{Q_3}(-3-3k)\oplus \mathcal{O}_{Q_3}(-2-k))  \\
     & = & 8k^3+42k^2+69k+44.
\end{array} $$
On the other hand, the family $\mathcal{D}(2k+1)$ of stable rank 2 sheaves obtained as conormal sheaves of generic foliations of curves of odd degree is parametrized by an open subset of $\Hom(\mathcal{O}_{Q_3}(-3-3k),\Omega^{1}_{Q_3}(-k))$ up to a scalar factor, thus
$$\begin{array}{rcl}
 \dim \mathcal{D}(2k+1) & =   & \dim\Hom(\mathcal{O}_{Q_3}(-3-3k),\Omega^{1}_{Q_3}(-k))-1 \\
     & = & 8k^3+42k^2+69k+34.
\end{array} $$

We are finally in position to prove the main result of this section.

\begin{theorem}\label{teoquadrica}
For each $k\geq 1$, the moduli space of stable rank 2 reflexive sheaves on $Q_3$ with Chern classes
\begin{equation*}
    (c_1,c_2,c_3)=(0,(3k^2+6k+4)H^2,(8k^3+24k^2+26k+6)H^3)
\end{equation*}
\noindent possesses an irreducible component of dimension $8k^3+42k^2+69k+44$ which contains, as a closed subset, the normal sheaves of generic foliations by curves of degree $2k+1$ on $Q_3$.
\end{theorem}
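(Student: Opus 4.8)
The plan is to exhibit the asserted component as the closure of the family $\mathcal{F}(2k+1)$, whose dimension $8k^3+42k^2+69k+44$ is already in hand, and to prove that this number coincides with $\dim\Ext^1(F,F)$ for a generic $F\in\mathcal{F}(2k+1)$. The subfamily $\mathcal{D}(2k+1)$ of normal sheaves of generic foliations of degree $2k+1$ then lies inside as a proper closed subset, since $\dim\mathcal{D}(2k+1)=8k^3+42k^2+69k+34$ is strictly smaller.

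First I would settle the preliminaries. Writing $A=\mathcal{O}_{Q_3}(-3-3k)\oplus\mathcal{O}_{Q_3}(-2-k)$ and $B=\Omega^1_{\p4}|_{Q_3}(-k)$, the Chern classes of $F=\coker(A\to B)$ follow from the Whitney formula and coincide with those of the normalization $\nf(-2-k)$ of a degree $2k+1$ foliation computed through Theorem \ref{invariantes}, namely $(c_1,c_2,c_3)=(0,(3k^2+6k+4)H^2,(8k^3+24k^2+26k+6)H^3)$. A generic defining morphism $A\to B$ degenerates in codimension at least two, so $F$ is reflexive of rank $2$; stability is an open condition that already holds on the nonempty subfamily $\mathcal{D}(2k+1)$ by Theorem \ref{estabilidade}, hence on a dense open subset of $\mathcal{F}(2k+1)$. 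Finally, $\mathcal{F}(2k+1)$ is irreducible, being the image of an open subset of $\Hom(A,B)$ under the quotient by $\Aut(A)$.

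The heart of the proof is the equality $\dim\Ext^1(F,F)=\dim\mathcal{F}(2k+1)$. Stability gives $\Ext^0(F,F)=\mathbb{C}$, and Serre duality on $Q_3$, where $\omega_{Q_3}=\mathcal{O}_{Q_3}(-3)$, gives $\Ext^3(F,F)\cong\Hom(F,F(-3))^{\ast}=0$ because $F$ is stable with $\mu(F(-3))<0=\mu(F)$. The Euler characteristic $\chi(F,F)=\int_{Q_3}\operatorname{ch}(F)^{\vee}\operatorname{ch}(F)\operatorname{td}(Q_3)$ (with $\operatorname{ch}(F)^{\vee}$ the sign-twisted Chern character) is a direct Riemann--Roch computation in $H^{\ast}(Q_3,\Z)$, after which $\dim\Ext^1(F,F)=1+\dim\Ext^2(F,F)-\chi(F,F)$. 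To evaluate $\Ext^2(F,F)$ I would apply $\Hom(-,F)$ to the sequence $0\to A\to B\to F\to0$, reducing to $\Ext^i(B,F)=H^i(T\p4|_{Q_3}(k)\otimes F)$ and $\Ext^i(A,F)=H^i(F(3+3k))\oplus H^i(F(2+k))$, and then control these groups using the Euler sequence of $\p4$ restricted to $Q_3$ together with the vanishing theorems for twists of $F$ in the spirit of Corollary \ref{cohquadrica}. I expect the outcome to be $\dim\Ext^1(F,F)=8k^3+42k^2+69k+44$.

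With this equality in hand, let $\mathcal{M}$ denote the moduli space in question. Since its Zariski tangent space at $[F]$ is $\Ext^1(F,F)$ and $\mathcal{F}(2k+1)\subseteq\mathcal{M}$, we get $\dim\mathcal{F}(2k+1)\le\dim_{[F]}\mathcal{M}\le\dim\Ext^1(F,F)=\dim\mathcal{F}(2k+1)$, so all are equal and $\mathcal{F}(2k+1)$ is a dense open subset of a unique irreducible component of $\mathcal{M}$ of the claimed dimension. To see that $\mathcal{D}(2k+1)$ is a closed subset, I would read off from diagram \eqref{diagram} that a member of $\mathcal{F}(2k+1)$ is the normal sheaf of a foliation exactly when the $\mathcal{O}_{Q_3}(-2-k)$-component of its defining morphism is a scalar multiple of the canonical map $\eta$ induced by $Q_3\hookrightarrow\p4$, a closed condition on $\Hom(A,B)$ that descends to $\mathcal{F}(2k+1)$. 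The main obstacle is the computation of $\Ext^2(F,F)$ for a generic member of $\mathcal{F}(2k+1)$: unlike the foliations in $\mathcal{D}(2k+1)$, such an $F$ does not arise from the clean sequence \eqref{folquadrica1}, so the vanishing and cohomology of $\Omega^1_{\p4}|_{Q_3}$ and its twists must be handled throughout.
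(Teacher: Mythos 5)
Your global strategy is the right one and matches the paper's frame: take the closure of $\mathcal{F}(2k+1)$, show $\dim\Ext^1(F,F)$ equals $\dim\mathcal{F}(2k+1)=8k^3+42k^2+69k+44$, and locate $\mathcal{D}(2k+1)$ inside as a lower-dimensional closed subset. But the heart of the theorem is precisely the $\Ext$ computation, and there your proposal has a genuine gap: you plan to compute $\Ext^2(F,F)$ for a \emph{generic} $F\in\mathcal{F}(2k+1)$ from the presentation $0\to A\to B\to F\to 0$ with $B=\Omega^1_{\p4}|_{Q_3}(-k)$, you correctly identify this as ``the main obstacle,'' and then you do not resolve it --- the conclusion is only that you ``expect'' the outcome to be the right number. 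Worse, the cohomological input you invoke, Corollary \ref{cohquadrica}, is proved only for normal sheaves of generic \emph{foliations}, i.e.\ for members of $\mathcal{D}(2k+1)$ arising from the sequence \eqref{folquadrica1} with $TQ_3$; it does not apply to a generic member of $\mathcal{F}(2k+1)$, whose quotient $B/A$ need not be $\Omega^1_{Q_3}(-k)$ up to twist. So the vanishings needed to identify $\Ext^2(F,F)$ with a computable cohomology group would all have to be rederived for generic $F$, and none of that is done.

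The paper sidesteps this obstacle with a semicontinuity device that your proposal misses: it computes $\dim\Ext^1(\nf,\nf)$ only at the \emph{special} points $\nf\in\mathcal{D}(2k+1)$, where the clean sequence \eqref{folquadrica1} and Corollary \ref{cohquadrica} yield $\Ext^2(\nf,\nf)\simeq H^2(TQ_3\otimes\nf(-2))$ and, via the diagram \eqref{diagram} and the Euler sequence, $\dim\Ext^2(\nf,\nf)=8k^3+6k^2-3k-1$, hence $\dim\Ext^1(\nf,\nf)=\dim\mathcal{F}(2k+1)$. Since $\dim\mathcal{F}(2k+1)$ is a lower bound for $\dim\Ext^1(F,F)$ at every point of the family, and $\dim\Ext^1(F,F)$ is upper semicontinuous, equality at the special locus forces equality at the generic point --- no computation at generic $F$ is needed. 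If you want to keep your route of working directly at generic $F$, it is feasible in principle (the groups $\Ext^i(B,F)=H^i(T\p4|_{Q_3}(k)\otimes F)$ and $H^i(F(t))$ can be controlled through the restricted Euler sequence), but you must actually establish the analogues of Corollary \ref{cohquadrica} for such $F$ and carry the computation to the end; as written, the decisive step is asserted, not proved. Your remaining points --- stability via openness from Theorem \ref{estabilidade}, $\Ext^3(F,F)=0$ by Serre duality and stability, and the closedness of $\mathcal{D}(2k+1)$ via the condition that the $\mathcal{O}_{Q_3}(-2-k)$-component of $\phi$ lies in the line spanned by $\eta$ (a closed, $\Aut(A)$-invariant condition, consistent with diagram \eqref{diagram}) --- are sound.
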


\begin{proof}
Given a foliation by curves $\sF$ of odd degree, it is enough to argue that
$$ \dim\Ext^1(\nf,\nf)=\dim\mathcal{F}(2k+1)=8k^3+42k^2+69k+44. $$
for $k\geq 1$.
Since $\dim\mathcal{F}(2k+1)$ is a lower bound for $\dim\Ext^1(F,F)$ for sheaves $F\in\mathcal{F}(2k+1)$, semicontinuity allows us to conclude that $\dim\Ext^1(F,F)=\dim\mathcal{F}(2k+1)$ for a generic sheaf $F\in\mathcal{F}(2k+1)$.

Applying the functor $\Hom(.,\nf(-2-k))$ in the exact sequence in display \eqref{folquadrica1}, we get the equality
\begin{equation}\label{eq01}
 \sum_{j=0}^{3}(-1)^{j}\dim\Ext^{j}(\nf,\nf)=\chi(\Omega^{1}_{Q_3}\otimes \nf)-\chi(\nf(1+2k))   
\end{equation}

Now, we twist the exact sequences
\begin{equation}\label{seqeuler}
0 \to \Omega^{1}_{\p4}|_{Q_3} \to \mathcal{O}_{Q_3}(-1)^{\oplus 5} \to \mathcal{O}_{Q_3} \to 0
\end{equation}
and
\begin{equation}\label{seqcot}
0 \to \mathcal{O}_{Q_3}(-2) \to \Omega^{1}_{\p4}|_{Q_3} \to \Omega^{1}_{Q_3} \to 0 
\end{equation}
by $\otimes\nf$ and then taking the Euler characteristic, we get
\begin{equation}\label{eq02}
\chi(\Omega^{1}_{Q_3}\otimes \nf)-\chi(\nf(1+2k))=-36k^2-72k-44.
\end{equation}

Here and from \eqref{eq01}, it follows that
$$ \dim\Ext^1(\nf,\nf)-\dim\Ext^2(\nf,\nf)=36k^2+72k+45, $$
since $\dim\Hom(\nf,\nf)=1$ and $\dim\Ext^3(\nf,\nf)=0.$ Therefore, we must now show that
$$\dim\Ext^2(\nf,\nf)=\dim\mathcal{F}(2k+1)-36k^2-72k-45=8k^3+6k^2-3k-1. $$

Applying the functor $\Hom(.,\nf(-2-k))$ to the exact sequence in display \eqref{folquadrica1}, we obtain the isomorphism
$$
\Ext^2(\nf,\nf)\simeq \Ext^2(TQ_3,\nf)\simeq H^2(TQ_3\otimes \nf(-2)),
$$
since $h^1(\nf(1+2k))=h^2(\nf(1+2k))=0$ by Corollary \eqref{cohquadrica}.

Now, we twist the dual exact sequence in display \ref{seqcot} by $\otimes\nf(-2)$ and pass to cohomology, obtaining 
$$  h^2(TQ_3\otimes \nf(-2))= h^2(T\p4|_{Q_3}\otimes \nf(-2))-h^2(\nf), $$
since $H^1(\nf)=H^3(TQ_3\otimes\nf(-2))=0$. 

To compute $h^2(T\p4|_{Q_3}\otimes \nf(-2))$, we twist the exact sequences of the second line and last column in the diagram \eqref{diagram} 
by $\otimes T\p4|_{Q_3}(k)$ and $\otimes T\p4|_{Q_3}(-2)$, respectively, and
pass to cohomology, we have
$$
h^2(T\p4|_{Q_3}\otimes \nf(-2))=h^3(T\p4|_{Q_3}(-3-2k))=\frac{1}{3}(2k-1)(2k+1)(8k+3),
$$
since $h^2(T\mathbb{P}^4|_{Q_3}(-2))=h^3(T\mathbb{P}^4|_{Q_3}(-2))=h^2(T\mathbb{P}^4|_{Q_3} \otimes \Omega^{1}_{\mathbb{P}^4}|_{Q_3})=h^3(T\mathbb{P}^4|_{Q_3} \otimes \Omega^{1}_{\mathbb{P}^4}|_{Q_3})=0.$

So, for $k\geq 1$, we have
$$ \dim\Ext^2(\nf,\nf) =  h^3(T\p4|_{Q_3}(-3-2k))-h^2(\nf)=  8k^3+6k^2-3k-1, $$
since $h^2(\nf)=h^0(\mathcal{O}_{Q_3}(-2+2k))$ by Corollary \ref{cohquadrica}.
\end{proof}

\begin{remark}
When $k=0$, we can still conclude that the sheaves $\nf$ given by the generic foliations by curves of degree 1 on $Q_3$ are smooth points of the moduli space of stable rank 2 reflexive sheaves with
Chern classes $(c_1,c_2,c_3)=(0,4H^2,6H^3)$ within an irreducible component of dimension 45, since $\Ext^2(\nf,\nf)=0.$
However, these sheaves only form a family of dimension 44 within this irreducible component.
\end{remark}

For the generic foliations by curves of degree $r=2k$ on $Q_3$ we have the following theorem, whose proof is analogous to proof of the Theorem \ref{teoquadrica}. We observe that the family of conormal sheaves of foliations by curves of even degree has the following dimension:
$$\begin{array}{rcl}
\dim \mathcal{D}(2k) & = & \dim\Hom(\mathcal{O}_{Q_3}(-2-3k),\Omega^{1}_{Q_3}(-k))-1 \\
     & = & 8k^3+30k^2+33k+9.
\end{array} $$

\begin{theorem}
For each $k\geq 0$, the moduli space of stable rank 2 reflexive sheaves on $Q_3$ with Chern classes
$$(c_1,c_2,c_3)=(-H,(3k^2+3k+2)H^2,(8k^3+12k^2+8k-2)H^3)$$
possesses an irreducible component of dimension $18$ for $k=0$, and $8k^3+30k^2+33k+19$, for $k\geq 1$ which contains, as a closed subset, the normal sheaves of generic foliations by curves of degree $2k$ on $Q_3$.
\end{theorem}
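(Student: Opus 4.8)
The plan is to mirror the proof of Theorem~\ref{teoquadrica} line by line, replacing the odd-degree sequence \eqref{folquadrica1} by the even-degree sequence \eqref{folquadrica2}, and carrying the twist $\mathcal{O}_{Q_3}(-2-k)$ throughout. First I would record, via Theorem~\ref{invariantes} applied to $X=Q_3$ with $\tau_{Q_3}=0$ and $r=2k$, that the normalized normal sheaf $\nf(-2-k)$ is a $\mu$-stable rank 2 reflexive sheaf with $c_1=-H$ and the stated $c_2,c_3$; stability is guaranteed by Theorem~\ref{estabilidade}, since $r=2k\ge 0>c_{Q_3}/\nu_{Q_3}-3\tau_{Q_3}=-3$. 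As in the odd case, I would set up the family $\mathcal{F}(2k)$ of cokernels of monomorphisms $\mathcal{O}_{Q_3}(-2-3k)\oplus\mathcal{O}_{Q_3}(-2-k)\to\Omega^1_{\p4}|_{Q_3}(-k)$ and the subfamily $\mathcal{D}(2k)$ of genuine conormal sheaves, whose dimension $8k^3+30k^2+33k+9$ is already computed in the excerpt; the two preceding lemmas (simplicity of $\Omega^1_{\p4}|_{Q_3}$ and the $\Aut$-orbit description of isomorphism classes) apply verbatim, so $\dim\mathcal{F}(2k)$ is $\dim\Hom(\mathcal{O}_{Q_3}(-2-3k)\oplus\mathcal{O}_{Q_3}(-2-k),\Omega^1_{\p4}|_{Q_3}(-k))-\dim\Aut(\mathcal{O}_{Q_3}(-2-3k)\oplus\mathcal{O}_{Q_3}(-2-k))$, which I expect to evaluate to $8k^3+30k^2+33k+19$.

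The core of the argument is the dimension count for the tangent space $\Ext^1(\nf,\nf)$. By semicontinuity it suffices to show $\dim\Ext^1(\nf,\nf)=\dim\mathcal{F}(2k)$ at a conormal sheaf. Applying $\Hom(-,\nf(-2-k))$ to \eqref{folquadrica2} gives an Euler-characteristic identity
\begin{equation*}
\sum_{j=0}^{3}(-1)^j\dim\Ext^j(\nf,\nf)=\chi(\Omega^1_{Q_3}\otimes\nf)-\chi(\nf(2k))\,,
\end{equation*}
after invoking $TQ_3\simeq\Omega^1_{Q_3}(2)$. I would then twist \eqref{seqeuler} and \eqref{seqcot} by $\nf$, take Euler characteristics, and compute the right-hand side by Riemann--Roch on $Q_3$ using the Chern classes from Theorem~\ref{invariantes}. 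With $\dim\Hom(\nf,\nf)=1$ (stability) and $\dim\Ext^3(\nf,\nf)=0$ (Serre duality plus Lemma~\ref{cohomQ3}(i)), this pins down $\dim\Ext^1-\dim\Ext^2$, so the whole computation reduces to evaluating $\dim\Ext^2(\nf,\nf)$.

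For $\Ext^2$ I would again use $\Hom(-,\nf(-2-k))$ applied to \eqref{folquadrica2} to get $\Ext^2(\nf,\nf)\simeq H^2(TQ_3\otimes\nf(-2))$, the vanishing of the neighboring terms coming from Corollary~\ref{cohquadrica} (with the cohomology degrees shifted appropriately for $r=2k$). Dualizing \eqref{seqcot} and twisting by $\nf(-2)$ reduces this to $h^2(T\p4|_{Q_3}\otimes\nf(-2))-h^2(\nf)$, and the diagram \eqref{diagram} (in its even-degree incarnation) reduces the first summand to a single Bott-type number $h^3(T\p4|_{Q_3}(-2-2k))$, with $h^2(\nf)=h^0(\mathcal{O}_{Q_3}(2k-2))$ by Corollary~\ref{cohquadrica}. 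I expect the main obstacle to be exactly this arithmetic: one must verify that all the intermediate cohomology groups of $T\p4|_{Q_3}$ that are assumed to vanish in the odd case still vanish after the even-degree shift, and then confirm that the final polynomial $8k^3+6k^2-\cdots$ for $\dim\Ext^2$ combines with $36k^2+\cdots$ to reproduce $\dim\mathcal{F}(2k)=8k^3+30k^2+33k+19$. A secondary subtlety, to be flagged as in the $k=0$ remark after Theorem~\ref{teoquadrica}, is that the borderline case $k=0$ may have $\Ext^2=0$ and hence require the separately stated dimension $18$, so I would treat $k=0$ by hand and assert irreducibility and the closed-subset claim for $k\ge 1$ from the $\Ext^1$ count as in the odd-degree theorem.
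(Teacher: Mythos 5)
Your proposal follows essentially the same route as the paper, whose proof of this statement consists precisely of the remark that it is "analogous to the proof of Theorem \ref{teoquadrica}" together with the computation $\dim\mathcal{D}(2k)=8k^3+30k^2+33k+9$; your even-degree family $\mathcal{F}(2k)$ does evaluate to $\dim\mathcal{F}(2k)=h^0(\Omega^1_{\p4}|_{Q_3}(2k+2))+h^0(\Omega^1_{\p4}|_{Q_3}(2))-2-h^0(\mathcal{O}_{Q_3}(2k))=8k^3+30k^2+33k+19$ for $k\ge1$, and the $\Ext$ computation closes up as you predict. Two corrections to your arithmetic placeholders: for $r=2k$ Corollary \ref{cohquadrica}(3) gives $h^2(\nf)=h^0(\mathcal{O}_{Q_3}(2k-3))$ (not $2k-2$), yielding $\dim\Ext^2(\nf,\nf)=h^0(\Omega^1_{\p4}|_{Q_3}(2k-1))-h^0(\mathcal{O}_{Q_3}(2k-3))=8k^3-6k^2-3k+1$ against $\dim\Ext^1-\dim\Ext^2=36k^2+36k+18$; and the separate value $18$ at $k=0$ is caused not by the odd-case $\Ext^2$-phenomenon but by the jump $\dim\Aut(\mathcal{O}_{Q_3}(-2)^{\oplus2})=4$, which gives $\dim\mathcal{F}(0)=22-4=18=\dim\Ext^1(\nf,\nf)$ (here $\Ext^2(\nf,\nf)=0$, so the family actually fills out a smooth $18$-dimensional component at $k=0$).
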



\subsection{Spinor foliations}
Let us recall the definition and some properties of the \emph{spinor bundle} on $Q_3$. In particular, we revisit Ottaviani’s geometrical construction \cite{Ott1,Ott2}.

Let $G(2, 4)$ denote the Grassmannian of all 2-dimensional linear subspaces of $\K^4$. By using the geometry of the variety of all 1-dimensional linear subspaces of $Q_3$ it is possible to construct a morphism $s : Q_3 \to G(2, 4).$ Let $U$ be the universal bundle of the Grassmannian. 

\begin{definition}
	The pull-back bundle $S:=s^{\ast}U$ is called the \textit{spinor bundle} on $Q_3.$ 
\end{definition}

It is easy to see that $S$ is a rank 2 vector bundle; Ottaviani also shows that $S$ is $\mu$-stable ~\cite[Theorem 2.1]{Ott1}, that $S(1)$ is globally generated  and it satisfies $S^{\vee}=S(1)$ \cite[Theorem 2.8(i)]{Ott1}. In addition, $S$ is the unique stable rank 2 bundle on $Q_3$ with $c_1(S) = -H$ and $c_2(S) = L,$ cf. \cite{AS}, and fits into the short exact sequence
\begin{equation} \label{spinor-line}
	0\to \mO_{Q_3}(-1) \to S \to \sI_L \to 0
\end{equation}
where $L$ is a line in $Q_3$.


Since the dual of the spinor bundle $S^{\vee}$ is globally generated, $S^{\vee}(t)$ is globally generated, for all $t \geq 0,$ and thus so is $S^{\vee} \otimes \Omega^1_{Q_3}(2+t)$. We can then apply Ottaviani's Bertini-type theorem \cite[Teorema 2.8]{O}, to show that there are foliations by curves of the form
\begin{equation} \label{spinor fol}
	\sF_t: 0 \to S(-2-t) \to \Omega^1_{Q_3} \to \sI_{C}(2+2t) \to 0;
\end{equation} 
for each $t\ge0$; note that $\deg(\sF)=2+2t$.

Furthermore, we observe that the $\mu$-stability of $\Omega^1_{Q_3}$ implies that there are no injective morphisms $S(-2-t) \into \Omega^1_{Q_3}$ when $t\le-2$, since
$$ \mu(S(-2-t))=-(2t+5)/2>-1=\mu(\Omega^1_{Q_3}). $$

Finally, we consider the case $t=-1$ observing that $\Hom(S(-1),\Omega^1_{Q_3})\simeq H^0(S\otimes\Omega^1_{Q_3}(2))$. First, twist the exact sequence
\begin{equation}\label{normal-bundle}
	0 \to \mO_{Q_3}(-2) \to \Omega \to \Omega^1_{Q_3} \to 0, 
\end{equation}
where $\Omega:=\Omega^1_{\p4}|_{Q_3}$, by $S(2)$ to conclude that $H^0(S\otimes\Omega^1_{Q_3}(2))\simeq H^0(S\otimes\Omega(2))$, since $h^0(S)=h^1(S)=0$. Next, noting that $H^0(\mO_{Q_3}(1))\simeq H^0(\op4(1))$, we use the Euler exact sequence for $\Omega^1_{\p4}$ restricted to $Q_3$, namely
\begin{equation}\label{seq euler}
	0 \to \Omega \to H^0(\mO_{Q_3}(1))\otimes \mO_{Q_3}(-1) \to \mO_{Q_3} \to 0; 
\end{equation}
twisting it by $S(2)$, we obtain the multiplication map $\mu:H^0(\mO_{Q_3}(1))\otimes H^0(S(1)) \to H^0(S(2))$. Lemma \ref{tec-lem} below guarantees that $\mu$ is surjective. Since $\ker\mu\simeq H^0(S\otimes\Omega(2))$, it follows that
\begin{equation}\label{dim hom s}
	h^0(S\otimes\Omega(2)) = 5\cdot h^0(S(1)) - h^0(S(2)) = 4.    
\end{equation}

\begin{lemma}\label{tec-lem}
	Let $F$ be a globally generated coherent sheaf on a non singular threefold hypersurface $X$, and let 
	$$ \varepsilon:H^0(F)\otimes\mathcal{O}_X\twoheadrightarrow F \text{ and }
	\beta:H^0(\mathcal{O}_X(1))\otimes\mathcal{O}_X \twoheadrightarrow \mathcal{O}_X(1) $$
	be the evaluation morphisms; set $G:=\ker\varepsilon$. If $H^1(G(1))=0$, then the multiplication map
	$$ \mu : H^0(F)\otimes H^0(\mathcal{O}_X(1)) \longrightarrow  H^0(F(1)) $$
	is surjective
\end{lemma}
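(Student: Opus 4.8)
The plan is to exploit the defining exact sequence of $G$ together with a single twist by $\mathcal{O}_X(1)$. Since $F$ is globally generated, the evaluation morphism $\varepsilon$ is surjective, so by definition of $G:=\ker\varepsilon$ we have the short exact sequence
$$ 0 \to G \to H^0(F)\otimes\mathcal{O}_X \stackrel{\varepsilon}{\to} F \to 0. $$
First I would twist this sequence by $\mathcal{O}_X(1)$, obtaining
$$ 0 \to G(1) \to H^0(F)\otimes\mathcal{O}_X(1) \to F(1) \to 0, $$
and pass to the associated long exact sequence in cohomology.

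The central point is to identify the map induced on global sections by the middle arrow. Since $H^0(H^0(F)\otimes\mathcal{O}_X(1))\simeq H^0(F)\otimes H^0(\mathcal{O}_X(1))$, the relevant portion of the long exact sequence reads
$$ H^0(F)\otimes H^0(\mathcal{O}_X(1)) \stackrel{\mu}{\longrightarrow} H^0(F(1)) \longrightarrow H^1(G(1)), $$
and the first arrow is precisely the multiplication map $\mu$: the evaluation morphism sends a global section $s\in H^0(F)$ to itself, so after twisting and taking sections the induced map carries $s\otimes t$ to the product $s\cdot t$. Given this identification, exactness of the long sequence shows that $\coker\mu$ injects into $H^1(G(1))$; since we assume $H^1(G(1))=0$, we conclude $\coker\mu=0$, that is, $\mu$ is surjective.

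The only subtle step is verifying that the arrow induced on $H^0$ by $\varepsilon\otimes\mathrm{id}_{\mathcal{O}_X(1)}$ genuinely coincides with the multiplication pairing rather than some twisted variant; I would confirm this by chasing the evaluation morphism locally, where $\varepsilon(s\otimes 1)=s$, and tensoring with a local section $t$ of $\mathcal{O}_X(1)$ yields $s\cdot t$. Everything else is a formal consequence of the left-exactness of global sections and the hypothesis $H^1(G(1))=0$; no property specific to the threefold hypersurface $X$ is required beyond the vanishing already assumed, so the argument in fact holds on any projective variety.
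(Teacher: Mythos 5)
Your proof is correct and is essentially identical to the paper's: twist the defining sequence $0\to G\to H^0(F)\otimes\mathcal{O}_X\to F\to 0$ by $\mathcal{O}_X(1)$, take cohomology, identify the induced map on global sections with $\mu$, and conclude surjectivity from $H^1(G(1))=0$. Your extra verification that the induced arrow really is the multiplication pairing, and your observation that no property of the threefold hypersurface is used, are both accurate refinements of the same argument.
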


This result applies nicely when $X$ is a quadric threefold and $F=S(1)$ is the twisted spinor bundle, $k\ge1$; in this case, $G=S\oplus\mO_{Q_3}$ (compare with \cite[Theorem 2. (i)]{Ott1}), we know that $H^1(G(1))=0$.

\begin{proof}
	Twisting the exact sequence
	$$ 0\to G\to H^0(F)\otimes\mathcal{O}_X \to F \to 0 $$
	by $\mathcal{O}_X(1)$ and passing to cohomology, we obtain 
	$$ H^0(F)\otimes H^0(\mathcal{O}_X(1)) \stackrel{\mu}{\longrightarrow} H^0(F(1)); $$
	note that this is surjective precisely when $H^1(G(1))=0$.
\end{proof}

To conclude the discussion of the case $t=-1$, we prove:

\begin{lemma}\label{spin fol -1}
	Every non trivial morphism $\varphi:S(-1)\to \Omega^1_{Q_3}$ is injective and has torsion free cokernel.
\end{lemma}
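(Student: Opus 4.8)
The plan is to establish the two claims—injectivity and torsion-freeness of the cokernel—in that order, using the stability of $\Omega^1_{Q_3}$ together with the low-degree cohomology of the spinor bundle. First I would observe that since $S$ is a rank $2$ bundle and $\Omega^1_{Q_3}$ is a rank $2$ torsion-free sheaf, any morphism $\varphi:S(-1)\to\Omega^1_{Q_3}$ is either zero, generically of rank $1$, or generically of rank $2$. A nonzero $\varphi$ cannot have generic rank $1$: its image would be a rank $1$ subsheaf of $\Omega^1_{Q_3}$, hence contained in a saturated rank $1$ reflexive subsheaf $\mO_{Q_3}(a)\into\Omega^1_{Q_3}$, and $\mu$-stability of $\Omega^1_{Q_3}$ forces $a<-1$; on the other hand such a map $S(-1)\onto\mO_{Q_3}(a)$ would factor through the quotient of $S(-1)$ by a rank $1$ subsheaf, and comparing slopes (using $\mu(S(-1))=-3/2$ and $\mu$-stability of $S$) yields a contradiction. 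Hence $\varphi$ has generic rank $2$.

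Once $\varphi$ has generic rank $2$, its kernel is a torsion-free sheaf of rank $0$, hence $0$, so $\varphi$ is injective; this settles the first claim. For the second claim I would argue that the cokernel $Q:=\coker\varphi$ is torsion free. Write $T\subset Q$ for the maximal torsion subsheaf and suppose $T\ne0$. Since $\Omega^1_{Q_3}/T$ would then be a quotient of $\Omega^1_{Q_3}$ by a subsheaf strictly containing $\im\varphi$, I would compare first Chern classes: the generic rank $2$ map $\varphi$ gives $c_1(\im\varphi)=c_1(S(-1))=-3H$ while $c_1(\Omega^1_{Q_3})=-3H$, so $\im\varphi$ and $\Omega^1_{Q_3}$ have the same determinant. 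This means $\varphi$ is an isomorphism in codimension $1$ away from a subscheme of codimension $\ge2$, and any torsion in $Q$ must be supported in codimension $\ge2$; but $Q$ is generically $0$, so $Q=T$ is torsion supported in codimension $\ge 2$. To rule this out I would examine the associated ideal-sheaf sequence as in display \eqref{spinor fol}, using that $\coker\varphi$ would then be $\sI_C(\,\cdot\,)$ for an actual curve $C$ exactly when the degeneracy locus of $\varphi$ has pure codimension $2$; the vanishing $h^0(S\otimes\Omega(2))=4$ computed in display \eqref{dim hom s} shows the space of such morphisms is nonempty, and a generic such $\varphi$ degenerates in the expected codimension.

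The main obstacle I anticipate is excluding the possibility that a specific (non-generic) $\varphi$ degenerates along a divisor, which would produce torsion in $Q$ supported in codimension $1$; equivalently, ruling out that $\det\varphi$, viewed as a section of $\mO_{Q_3}(c_1(\Omega^1_{Q_3})-c_1(S(-1)))=\mO_{Q_3}$, vanishes. Here the determinant matching $c_1(S(-1))=c_1(\Omega^1_{Q_3})=-3H$ is decisive: $\det\varphi$ is a global section of $\mO_{Q_3}$, hence a constant, so $\varphi$ degenerates in codimension $\ge2$ precisely when that constant is nonzero, i.e. for every nonzero $\varphi$. This is the crux, and it hinges on the numerical coincidence of the two determinants; I would make this explicit by noting $\wedge^2\varphi:\det(S(-1))\to\det\Omega^1_{Q_3}$ is a nonzero map between isomorphic line bundles, hence an isomorphism, which simultaneously reproves injectivity and forces $Q$ to be torsion free. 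I expect the slope bookkeeping in the generic-rank-$1$ exclusion to be routine, whereas correctly packaging the determinant argument so that it yields torsion-freeness (and not merely generic rank $2$) will require the most care.
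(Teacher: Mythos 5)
Your injectivity argument is sound and essentially the paper's: the paper takes the contrapositive with $\ker\varphi\simeq\mO_{Q_3}(-k)$, uses stability of $S$ to force $k\ge2$, writes $\im\varphi\simeq\sI_X(k-3)$, and gets $k-3\le-2$ from stability of $\Omega^1_{Q_3}$ --- the same slope bookkeeping you do on the quotient side. But the second half of your proposal rests on a false premise: $Q_3$ is a \emph{threefold}, so $\Omega^1_{Q_3}$ has rank $3$, not rank $2$. Consequently an injective $\varphi:S(-1)\to\Omega^1_{Q_3}$ has cokernel of rank $1$ (this is exactly why the lemma produces the degree-$0$ foliation $0\to S(-1)\to\Omega^1_{Q_3}\to\sI_C\to0$ of display \eqref{spinor fol} with $t=-1$), not a torsion sheaf; your claim that ``$Q$ is generically $0$, so $Q=T$'' is wrong, and the determinant mechanism collapses entirely: $\wedge^2\varphi$ maps the line bundle $\det(S(-1))=\mO_{Q_3}(-3)$ into $\wedge^2\Omega^1_{Q_3}$, a rank-$3$ bundle, so there is no ``nonzero map between isomorphic line bundles'' and no section of $\mO_{Q_3}$ to invoke. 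A second, independent defect: even where it made sense, your appeal to $h^0(S\otimes\Omega(2))=4$ and to a \emph{generic} $\varphi$ degenerating in expected codimension cannot prove the lemma, which asserts torsion-freeness for \emph{every} nontrivial morphism.

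The paper's actual argument for torsion-freeness is worth internalizing, since it is the standard replacement for a determinant trick when the ranks differ. Set $P:=\coker\varphi$, a rank-$1$ sheaf with $c_1(P)=0$, and let $T\subset P$ be the maximal torsion subsheaf, so $P/T\simeq\sI_Y(k)$. Stability of $\Omega^1_{Q_3}$ applied to the quotient $P/T$ gives $c_1(P/T)\ge0$, hence $c_1(T)=-c_1(P/T)\le0$; since $T$ is torsion, $c_1(T)\ge0$, so $c_1(T)=0$ and $T$ is supported in codimension $\ge2$, i.e.\ $\dim T\le1$. Then the homological-dimension constraint finishes: from $0\to S(-1)\to\Omega^1_{Q_3}\to P\to0$ with both outer terms locally free, one gets $\inext^q(P,\mO_{Q_3})=0$ for $q>1$, whereas torsion of dimension $1$ (resp.\ $0$) would force $\inext^2(P,\mO_{Q_3})\ne0$ (resp.\ $\inext^3(P,\mO_{Q_3})\ne0$). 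Your proposal has no substitute for this step --- the stability bookkeeping alone only rules out torsion along a divisor, and some local-duality or depth argument is needed to exclude torsion in codimension $\ge2$.
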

\begin{proof}
	If a non trivial morphism $\varphi:S(-1)\to \Omega^1_{Q_3}$ is not injective, then $\ker\varphi\simeq\mO_{Q_3}(-k)$ for some $k\ge2$, since $\ker\varphi$ must be a rank 1 reflexive sheaf. Furthermore, $\im\varphi\simeq \sI_X(k-3)$ for some curve $X$; since this is a subsheaf of $\Omega^1_{Q_3}$, we must have $k-3\le-2$, leading to a contradiction.
	
	Given a monomorphism $\varphi:S(-1)\to \Omega^1_{Q_3}$, let $P:=\coker\varphi$; note that $c_1(P)=0$. If $P$ is not torsion free, let $T\into P$ be its maximal torsion sheaf, so that $P/T\simeq \sI_Y(k)$; we note that $c_1(P/T)\ge0$ since this is a quotient of $\Omega^1_{Q_3}$. Since $c_1(T)=-c_1(P/T)\le0$, it follows that $c_1(T)=-c_1(P/T)=0$, so $\dim T\le1$. But we get, from the exact sequence $0\to T\to P\to P/T\to0$, that $\inext^q(P,\mO_{Q_3})\ne0$ either for $q=2$ (if $\dim T=1$) or $q=3$ (if $\dim T=0$). But this contradicts the sequence $0\to S(-1)\stackrel{\varphi}{\to} \Omega^1_{Q_3}\to P\to0$, which implies that $\inext^q(P,\mO_{Q_3})=0$ for $q>1$.
\end{proof}

The results above lead us to the following definition.

\begin{definition}
	A \emph{spinor foliation} on $Q_3$ is a foliation by curves whose conormal sheaf is isomorphic to the spinor bundle up to twist, that is, $\nf^\vee \simeq S(-2-t)$ for some $t \geq -1.$
\end{definition}

The singular scheme $C$ of the spinor foliation $\sF_t$ is a connected curve (by Corollary \ref{q-conn}) of degree $\deg C= 6t^2+18t+15$ and genus $g = 10t^3+36t^2+43t+16$ (see Corollary \ref{deg+chi}). In addition, $C$ is an arithmetically Buchsbaum curve \cite[Theorem 4.1]{AMS}; Ottaviani's Bertini-type theorem implies that $C$ is smooth for generic choice of monomorphism $S(-2-t) \into \Omega^1_{Q_3}$.


\subsection{Foliations of odd degree}\label{OS bundles}

Let $E$ be a $\mu$-semistable rank 2 vector bundle on $Q_3$ with Chern classes $c_1(E)=0$ (so that $E^{\ast}\simeq E$) and $c_2=2L$. One can show that

\begin{enumerate}
	\item either $E$ is stable and is given by and extension of $\sI_X(1)$ by $\mO_{Q_3}(-1)$, where $X$ is the union of two disjoint conics;
	\item or $E$ is strictly $\mu$-semistable and is given by and extension of $\sI_Y$ by $\mO_{Q_3}$, where $Y$ is a double line of genus $-2$.
\end{enumerate}


Indeed, the first claim was proved by Ottaviani and Szurek in \cite[Section 2]{Ott-Suz}. When $E$ is strictly $\mu$-semistable, we consider a non trivial section $\sigma\in H^0(E)$, and let $Y:=(\sigma)_0$ be its zero locus. This gives us the short exact sequence
$$ 0\to \mO_{Q_3} \stackrel{\sigma}{\to} E \to \sI_Y \to 0; $$
The numerical invariants of $Y$ can easily be computed from the Chern Classes of $E$:
$$ \deg(Y)=c_2(E)=2 ~~~{\rm and }~~~ 2p_a(Y)-2=-3c_2(E)=-6 ~~\Rightarrow~~ p_a(Y)=-2,$$
as desired. One can check that 
$$ \Ext^1(\sI_Y,\mO_{Q_3}) \simeq H^2(\sI_Y(-3))^{\vee} \simeq H^1(\mO_{L}(-2))^{\vee}\simeq \C, $$
where $L:=Y_{\rm red}$ is the line supporting $Y$; morever, note that $\omega_Y\simeq\mO_{Y}\otimes\omega_{Q_3}$, implying that the unique nontrivial extension of $\sI_Y$ by $\mO_{Q_3}$ is indeed locally free.


Let us now focus on the first case assuming that $E$ is $\mu$-stable. Ottaviani and Szurek show that $E(1)$ is globally generated \cite[Proposition 1.11]{Ott-Suz}.

Since $E^{\vee}(1)$ is globally generated, $E^{\vee}(t)$ is globally generated, for all $t\geq 1$, and hence $E^{\vee} \otimes \Omega^1_{Q_3}(2+t)$ is also globally generated in this range. By  Ottaviani's Bertini-type thorem \cite[Teorema 2.8]{O}, there is a foliation by curves of the form
\begin{equation} \label{odd fol}
	\sF_t: 0 \to E(-2-t) \to \Omega^1_{Q_3} \to \sI_{C}(2t+1) \to 0, ~~ t\ge1;
\end{equation} 
note that $\deg(\sF)=2t+1$. By Corollary \ref{deg+chi}, we have that the singular scheme $C$ of a foliation of the form \eqref{odd fol} is a (generically smooth) curve of degree $\deg C=22t^2-48t+24$ and genus $g=58t^3-219t^2+262t-97$. 

Furthermore, we observe that a sequence as in display \eqref{odd fol} does not exist for $t\le-1$: since $\Omega^1_{Q_3}$ is $\mu$-stable, we must have that 
$$ \mu(E(-2-t))=-2-t < -1=\mu(\Omega^1_{Q_3}). $$

Finally, let us consider the critical case $t=0$. First, twist the exact sequence in display \eqref{normal-bundle} by $E(2)$ and then taking the Euler characteristic, we get 
$$\chi(E\otimes \Omega^1_{Q_3}(2))=\chi(E\otimes \Omega(2))+1,$$
since $\chi(E)=-1.$ Now, twist the exact sequence in display \eqref{seq euler} by $E(2)$ and then taking the Euler characteristic,
we get $\chi(E\otimes \Omega(2))=8$, since $\chi(E(1))=5$ and $\chi(E(2))=17$. 

\noindent Thus  $\chi(E\otimes \Omega^1_{Q_3}(2))=9.$
Now, we will show that $h^0(E\otimes \Omega^1_{Q_3}(2))\geq 9$. For this, it is sufficient to argue that  $h^2(E\otimes \Omega^1_{Q_3}(2))=0.$
Initially note that, by the exact sequence in display \eqref{normal-bundle}, $h^2(E\otimes \Omega^1_{Q_3}(2))=h^2(E\otimes \Omega(2)),$ since $h^2(E)=h^3(E)=0.$ Next, twist the exact sequence in display \eqref{seq euler} by $E(2)$ and then taking the long exact sequence in cohomology, we have
$$\cdots \to H^1(E(2)) \to H^2(E\otimes \Omega(2)) \to H^2(E(1)^{\oplus 5})\to\cdots$$ 

As $h^1(E(2))=h^2(E(1))=0$, we get $h^2(E\otimes \Omega(2))=h^2(E\otimes \Omega^1_{Q_3}(2))=0$ and hence
$$h^0(E\otimes \Omega^1_{Q_3}(2))\geq 9,$$
since $\chi(E\otimes \Omega^1_{Q_3}(2))=9.$
Therefore, there is non trivial morphism $\phi: E(-2)\to \Omega^1_{Q_3}$.

Similarly, when $E$ is strictly $\mu$-semistable, we get $\chi(E\otimes \Omega^1_{Q_3}(2))=9$ and  $$h^2(E\otimes \Omega^1_{Q_3}(2))=h^2(E\otimes \Omega(2)).$$ To show that $h^0(E\otimes \Omega^1_{Q_3}(2))\neq 0$, we will argue that $h^2(E\otimes \Omega(2))\leq 8$. Indeed, twist the exact sequence in display \eqref{seq euler} by $E(2)$ and then taking the long exact sequence in cohomology, we get the epimorphism
$$\cdots \to H^1(E(2)) \to H^2(E\otimes \Omega(2)) \to 0,$$
since $h^2(E(1))=0.$ Now, from the exact sequence 
$$ 0\to \mO_{Q_3} \stackrel{\sigma}{\to} E \to \sI_Y \to 0,$$ we get $h^1(E(2))=h^1(\sI_Y(2))\leq 4$, since $Y$ is a double line. Therefore,
$h^2(E\otimes \Omega^1_{Q_3}(2))\leq 4$ and hence $h^0(E\otimes \Omega^1_{Q_3}(2))\geq 5.$

In fact, every non trivial morphism $\phi: E(-2)\to\Omega^1_{Q_3}$ is a monomorphism: if $\phi$ is not injective, then $\ker\phi\simeq\mO_{Q_3}(-k)$ for some $k\geq 3,$ since $\ker\phi$ must be a rank 1 reflexive sheaf. Thus, $\im\phi\simeq \sI_Z(k-4)$ for some curve
$Z\subset Q_3$, since this a rank 1 subsheaf of $\Omega^1_{Q_3}.$ The stability of $\Omega^1_{Q_3}$ implies that $k-4\leq -2$, leading to a contradiction. However, it is not clear to the authors whether there exists $\phi\in\Hom(E(-2),\Omega^1_{Q_3})$ such that $\coker\phi$ is torsion free.


\subsection{Local complete intersection foliations of degree 0}

Let us now consider consider local complete intersection foliations by curves of degree 0 on a smooth quadric threefold $Q_3$, given by exact sequences of the following form
\begin{equation} \label{deg 0 fol}
	\sF: 0 \to \nf^\vee \to \Omega^1_{Q_3} \to \sI_{C} \to 0.    
\end{equation}

\begin{proposition}
	If $\sF$ is a local complete intersection foliations of degree 0 a smooth quadric hypersurface $Q_3\subset\p4$, then $\nf^\vee=S(-1)$, where $S$ is the spinor bundle, and $\sing(\sF)$ is the disjoint union of a line and a conic.
\end{proposition}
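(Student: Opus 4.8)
The plan is to identify $\nf^\vee$ with a twist of the spinor bundle by first fixing its numerical type and stability, and then to extract the geometry of $\sing(\sF)$ from the resulting exact sequence \eqref{deg 0 fol}.

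First I would fix the numerical type of $\nf^\vee$. Since $\sF$ is a local complete intersection foliation, $\nf^\vee$ is locally free of rank $2$; and since $Q_3$ satisfies $2\rho_{Q_3}-\tau_{Q_3}+c_{Q_3}/\nu_{Q_3}=1$, the degree $0$ case of Theorem \ref{stable2} shows that $\nf^\vee$ is $\mu$-stable. Theorem \ref{invariantes} with $r=\tau_{Q_3}=0$ gives $c_1(\nf^\vee)=c_1(\Omega^1_{Q_3})=-3H$, so that $E:=\nf^\vee(1)$ is a $\mu$-stable rank $2$ bundle with $c_1(E)=-H$. As $S$ is the unique $\mu$-stable rank $2$ bundle on $Q_3$ with $c_1=-H$ and $c_2=L$ (cf. \cite{AS}), it is enough to prove $c_2(E)=L$; this yields $E\simeq S$ and hence $\nf^\vee\simeq S(-1)$.

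Next I would pin down $c_2(E)$. Writing $c_2(E)=dL$, the Bogomolov inequality for the $\mu$-stable bundle $E$ forces $d\ge 1$. On the other hand, Corollary \ref{deg+chi} evaluated at $r=0$ gives $\chi(\oc)=-\tfrac12\int_{Q_3}c_3(\Omega^1_{Q_3})=2$ together with $\deg(C)=\int_{Q_3}\bigl(c_2(\Omega^1_{Q_3})-c_2(\nf^\vee)\bigr)H=4-d$. Since a pure one-dimensional scheme of degree $1$ is a line, with $\chi=1\neq 2$, the value $d=3$ is impossible, while $d=1$ is exactly the spinor case with $\deg C=3$. Granting for the moment that $d=2$ is excluded, I would then conclude $\nf^\vee\simeq S(-1)$ by the uniqueness statement above, and read off the singular scheme by substituting the twisted defining sequence $0\to\mO_{Q_3}(-2)\to S(-1)\to\sI_L(-1)\to 0$ coming from \eqref{spinor-line} into \eqref{deg 0 fol}. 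Using $\deg C=3$ and $\chi(\oc)=2$, so that $p_a(C)=-1<0$ and $C$ is disconnected, additivity of degree and of $\chi$ over connected components (each a connected curve of Euler characteristic at most $1$) leaves only a splitting into a genus $0$ curve of degree $1$ and one of degree $2$; that is, $C$ is the disjoint union of a line and a (a priori possibly degenerate) conic.

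The main obstacle — and the step I would scrutinize most — is the exclusion of $d=2$. The numerical invariants do not rule it out: a reduced pair of disjoint lines has degree $2$ and $\chi=2$, exactly matching $d=2$, and such configurations appear realizable as the zero scheme of a global vector field $\sigma\in H^0(TQ_3)$ whose associated element of $\mathfrak{so}(5)$ has a repeated nonzero eigenvalue (its two isotropic eigenplanes cutting out two disjoint lines on $Q_3$). Consequently this case cannot be settled by Chern-class bookkeeping alone; one must either show that such vector fields fail to produce a locally free conormal sheaf, or that the corresponding $E$ cannot embed in $\Omega^1_{Q_3}$ with torsion-free cokernel. I would attack it through the cohomological rigidity of $S$ — one computes $\chi(E)=\deg C-3$, so $d=2$ forces $\chi(E)=-1$, whereas $h^\bullet(S)=0$ — combined with the vanishing results of Lemmas \ref{cohomQ3} and \ref{cohom2} applied to the foliation sequence. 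If $d=2$ genuinely occurs, then the statement must be understood as describing a generic or irreducible family of degree $0$ local complete intersection foliations; isolating this point is the delicate heart of the proposition.
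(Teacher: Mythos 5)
Your numerical frame coincides with the paper's — $c_1(\nf^\vee)=-3H$ via Theorem \ref{invariantes}, $\mu$-stability via Theorem \ref{stable2}, the Bogomolov bound, $\chi(\oc)=2$ via Corollary \ref{deg+chi}, and the exclusion of $\deg C=1$ — but there is a genuine gap at exactly the point you flag: the case $c_2(E)=2L$, i.e.\ $\deg C=2$, is never excluded. As you correctly observe, two skew lines have degree $2$ and $\chi(\oc)=2$, so no Chern-class bookkeeping can kill this case; and the attacks you sketch do not close it either. Your remark that $d=2$ would force $\chi(E)=-1$ while $S$ has vanishing cohomology only shows $E\not\simeq S$ \emph{in that case}, which is vacuous rather than contradictory, and Lemmas \ref{cohomQ3} and \ref{cohom2} give no further obstruction. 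Your closing suggestion, that the proposition might have to be reinterpreted generically if $d=2$ occurs, is also not how things stand: the case genuinely does not occur, but ruling it out requires an input your proposal lacks.

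The paper's missing ingredient is structural, not numerical. It observes that in both remaining cases the putative singular curve is arithmetically Buchsbaum (1-Buchsbaum): for $\deg C=2$ with $\chi(\oc)=2$ the curve is two disjoint lines or a double line, and for $\deg C=3$ with genus $-1$ it is a line plus a conic. Then \cite[Theorem 4.4]{AMS} upgrades this to the statement that $\nf^\vee$ is arithmetically Cohen--Macaulay, and the classification of rank $2$ ACM bundles on quadrics \cite[Theorem 2.7]{Bu} leaves only $\nf^\vee=\mO_{Q_3}(a)\oplus\mO_{Q_3}(b)$ with $a+b=-3$ or $\nf^\vee=S(-1)$. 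The split case is impossible since $h^0(\nf^\vee(1))=0$ (Lemma \ref{cohomQ3}(i)) forces $a,b\le-2$, hence $a+b\le-4$. So $\nf^\vee\simeq S(-1)$, whose second Chern class gives $[C]=3L$; this \emph{retroactively} excludes $\deg C=2$, and your deduction of the disjoint line-plus-conic decomposition from $\deg C=3$, $\chi(\oc)=2$ then goes through as in the paper. (The paper also confirms that this case is realized, via the dimension count in display \eqref{dim hom s} and Lemma \ref{spin fol -1}.) To salvage your route you would need precisely this Buchsbaum-to-ACM step, or an independent argument that no $\mu$-stable bundle $E$ with $c_1(E)=-H$, $c_2(E)=2L$ embeds in $\Omega^1_{Q_3}(1)$ with cokernel the twisted ideal sheaf of two skew lines.
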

\begin{proof}
	Theorem \ref{invariantes} yields $c_1(\nf^\vee) = -3H$ and $c_2(\nf^\vee) = 4H^2- [C].$ Since $\nf^\vee$ is $\mu$-stable, we can use the Bogomolov inequality to obtain the following bound on the degree of the singular curve $C$:
	$$ \int_X c_2(\nf^\vee)H \ge \dfrac{1}{4}\int_X c_1(\nf^\vee)^2 H = \dfrac{9}{2} ~~ \Rightarrow~~
	\deg(C) \le 7/2. $$
	
	If $\sF$ is not generic, we can conclude that $1\le\deg(C)\le3$. In addition, Corollary \ref{deg+chi} yields $\chi(\cO_C) = 2$. Let us analyze each possibility.
	
	If $\deg(C) = 1,$ then $C$ must be a line, contradicting the equality $\chi(\cO_C) = 2$.
	
	If $\deg(C) = 2,$ so that $C$ is $2$ disjoint lines or a double line, and hence $C$ is a 1-Buchsbaum curve.
	
	The same happens when $\deg(C) = 3$: a curve of degree 3 and genus $-1$ must be a conic plus a line which is again a 1-Buchsbaum curve.
	
	It follows from \cite[Theorem 4.4]{AMS}, that $\nf^\vee$ is arithmetically Cohen--Macaulay, so $\nf^\vee$ is a direct sum of line bundles and twisted spinor bundles by \cite[Theorem 2.7]{Bu}. Since $\rk(\nf^\vee)=2$, we are left with two possibilities: either $\nf^\vee=\mO_{Q_3}(a)\oplus\mO_{Q_3}(b)$ with $a+b=-3$, or $\nf^\vee=S(-1)$.
	
	The first possibility is easy to rule out, since we must have $a,b\le-2$, while the second case does ineed occur, by inequality in display \eqref{dim hom s} and Lemma \ref{spin fol -1}.
\end{proof}

\bigskip

\subsection{Local complete intersection foliations of degree 1}

Let us now consider consider local complete intersection foliations by curves of degree 1 on a smooth quadric threefold $Q_3$, given by exact sequences of the following form
\begin{equation} \label{deg 1 fol}
	\sF: 0 \to \nf^\vee \to \Omega^1_{Q_3} \to \sI_{C}(1) \to 0.    
\end{equation}
Furthermore, Theorem \ref{invariantes} yields $c_2(\nf^\vee) = -4H$ and $c_2(\nf^\vee) = 8H^2 - [C].$

\begin{proposition}
	Let $\sF$ is a local complete intersection foliations of degree 1 with reduced singular scheme on a smooth quadric hypersurface $Q_3\subset\p4$, and set $E:=\nf^\vee(2)$ and $C:=\sing(\sF)$. Then
	
	then 
	\begin{enumerate}
		\item $E$ is the $\mu$-stable bundle with Chern classes $c_1(E)=0$ and $c_2(E)=2L$ and $C$ is a rational curve of degree 6;
		\item $E$ is the $\mu$-semistable bundle with Chern classes $c_1(E)=0$ and $c_2(E)=2L$ and $C$ is a curve of degree 6 given by the union of a rational and an elliptic curves.
		\item $E=\mathcal{O}_Q^{\oplus 2}$ and $C$ is a connected curve of degree 8 and genus 3.
	\end{enumerate}
\end{proposition}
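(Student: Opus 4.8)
The plan is to recognize $E:=\nf^\vee(2)$ as a $\mu$-semistable rank $2$ vector bundle with $c_1(E)=0$ and then to pin down both $E$ and the curve $C:=\sing(\sF)$ from the single integer $m$ fixed by $c_2(E)=mL$. First I would record, from Theorem \ref{invariantes} (with $\tau_{Q_3}=0$, $r=1$), that $c_1(\nf^\vee)=-4H$ and $c_2(\nf^\vee)=8H^2-[C]$; twisting gives $c_1(E)=0$ and $c_2(E)=4H^2-[C]=8L-[C]$, so that $c_2(E)=mL$ is equivalent to $\deg(C)=8-m$. Since $\sF$ is a local complete intersection foliation, $\nf^\vee$ and hence $E$ are locally free of rank $2$, whence $c_3(E)=0$; and since $\deg\sF=1=2\rho_{Q_3}-\tau_{Q_3}+c_{Q_3}/\nu_{Q_3}$, Theorem \ref{stable2} shows that $\nf^\vee$, and therefore $E$, is $\mu$-semistable.

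Next I would collect the numerical constraints on $C$. The Bogomolov inequality for the $\mu$-semistable bundle $E$ with $c_1(E)=0$ gives $c_2(E)\cdot H\ge0$, i.e. $m\ge0$ and $\deg(C)\le8$. Corollary \ref{deg+chi} (using the Chern numbers of $Q_3$) yields $\chi(\oc)=10-\tfrac32\deg(C)$, so integrality of $\chi(\oc)$ forces $\deg(C)$ even; moreover $\deg(C)=0$ is impossible since \eqref{int-c3} gives $\sum_j\mu_{Q_3}(C_j,1)=\int_{Q_3}c_3(TQ_3(1))=20\neq0$, forcing $C\neq\varnothing$. Thus $\deg(C)\in\{2,4,6,8\}$, i.e. $c_2(E)\in\{6L,4L,2L,0\}$. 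The value $\deg(C)=2$ is discarded at once: it forces $\chi(\oc)=7$, whereas a reduced curve of degree $2$ has $\chi(\oc)\le2$.

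The remaining low value $\deg(C)=4$ is the main obstacle. Here $\chi(\oc)=4$ together with reducedness forces $C$ to be four pairwise disjoint lines, and this cannot be excluded by Chern-class arithmetic alone: Riemann--Roch gives $\chi(E)=2-\tfrac32\,c_2(E)\cdot H=-4$, which is consistent with $h^0(\oc)=4$ through $h^2(\nf^\vee(-1))=h^1(E)$ (Serre duality, $E^\vee\cong E$) and the cohomology of $0\to\nf^\vee(-1)\to\Omega^1_{Q_3}(-1)\to\sI_C\to0$. I expect this to be the delicate point, to be settled by a dedicated argument. Two routes seem viable: (i) a connectedness argument for $\sing_1(\sF)$ in degree $1$---Corollary \ref{q-conn} is unavailable since $h^2(\Omega^1_{Q_3}(-1))=1\neq0$, so one must analyze the boundary map $H^2(\nf^\vee(-1))\to H^2(\Omega^1_{Q_3}(-1))\cong\C$ directly to bound $h^0(\oc)$; or (ii) a non-existence statement showing that no $\mu$-semistable rank $2$ bundle with $c_1=0$, $c_2=4L$ fits into a sequence $0\to E\to TQ_3\to\sI_C(3)\to0$ with $C$ four disjoint lines, e.g. by restricting this sequence to the lines $\ell_i$ and to a general hyperplane section $Q_3\cap H\cong\p1\times\p1$, or by reading $TQ_3\twoheadrightarrow\sI_C(3)$ as a section of $\Omega^1_{Q_3}(3)$ and invoking constraints on codimension-one distributions on $Q_3$. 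Note that the arithmetically Cohen--Macaulay route of the degree-$0$ proposition is unavailable here, since four disjoint lines have $h^1(\sI_C)=3\neq1$.

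Finally, with $\deg(C)\in\{6,8\}$, the two genuine cases follow from the classification of $\mu$-semistable rank $2$ bundles with $c_1=0$ on $Q_3$. If $\deg(C)=8$ then $c_2(E)=0$: a $\mu$-semistable rank $2$ bundle with trivial Chern character on $Q_3$ is $\mO_{Q_3}^{\oplus2}$ (its Jordan--H\"older factors are slope-$0$ stable sheaves with $c_2\ge0$ summing to $0$, hence copies of $\mO_{Q_3}$, and $\Ext^1(\mO_{Q_3},\mO_{Q_3})=H^1(\mO_{Q_3})=0$ splits them), so $\nf^\vee=\mO_{Q_3}(-2)^{\oplus2}$ is a complete intersection foliation; then $C$ is connected (via \cite{AMS}) of degree $8$ and, from $\chi(\oc)=-2$, of arithmetic genus $3$, which is case (2.3). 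If $\deg(C)=6$ then $c_2(E)=2L$, and by \cite{Ott-Suz} (see Subsection \ref{OS bundles}) $E$ is either the $\mu$-stable bundle or the strictly $\mu$-semistable extension of $\sI_Y$ by $\mO_{Q_3}$ with $Y$ a double line; both give $\deg(C)=6$ and $\chi(\oc)=1$, and it remains to read the component structure of $C$ off the sequence $0\to E\to TQ_3\to\sI_C(3)\to0$, showing that the stable bundle yields a connected rational sextic (case (2.1)) while the strictly semistable one yields a disjoint union of a rational and an elliptic curve (case (2.2)), the two being distinguished by $h^0(\oc)$ computed via $h^1(E)$ as above.
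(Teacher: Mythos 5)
Your proposal follows the paper's skeleton quite closely for most of the argument --- Bogomolov giving $\deg C\le 8$, the computation $\chi(\oc)=10-\tfrac32\deg C$ forcing even degree, exclusion of $\deg C=2$ by reducedness, the identification of the two bundles with $c_2(E)=2L$ via Ottaviani--Szurek, and the connectivity count distinguishing cases (2.1) and (2.2) (the paper does this by dualizing the defining sequence to get $0\to\mO_{Q_3}(-1)\to TQ_3\to E(2)\to\omega_C(2)\to0$ and reading off $h^0(\omega_C)=h^0(E)=h^1(\sI_C)$, which is cleaner than your route through $h^2(\nf^\vee(-1))=h^1(E)$, precisely because $h^2(\Omega^1_{Q_3}(-1))\ne0$ interferes with the latter). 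But there is one genuine gap, and it is exactly the step you flag yourself: the exclusion of $\deg C=4$ (four pairwise skew lines) is never proved, only deferred to ``a dedicated argument'' with two unexecuted suggestions. Since this case is the technical heart of the proposition --- without it the stated trichotomy simply is not established --- the proposal as written is incomplete.

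For the record, the paper settles $\deg C=4$ by an argument in the spirit of your route (ii), but carried to completion: one first observes $h^0(E(1))>0$ for any $\mu$-semistable rank $2$ bundle $E$ with $c_1(E)=0$, $c_2(E)=4L$ --- trivially when $E$ is strictly $\mu$-semistable (then already $h^0(E)\ne0$), and by \cite[Proposition 3.1]{Ott-Suz} when $E$ is $\mu$-stable. A nonzero section $\sigma\in H^0(E(1))$ with zero locus $X$ yields a commutative diagram whose middle row is $0\to E(-2)\to\Omega^1_{Q_3}\to\sI_C(1)\to0$ and whose bottom row is $0\to\sI_X(-1)\to G\to\sI_C(1)\to0$. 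Dualizing the bottom row gives $0\to\mO_{Q_3}(-1)\to G^\vee\to\mO_{Q_3}(1)\stackrel{\eta}{\to}\omega_C(2)$, i.e.\ a section $\eta\in H^0(\omega_C(1))$; for four skew lines $\omega_C(1)$ restricts to $\mO_{\p1}(-1)$ on each component, so $\eta=0$ and hence $G^\vee\simeq\mO_{Q_3}(-1)\oplus\mO_{Q_3}(1)$. Dualizing the middle column of the diagram then produces a monomorphism $G^\vee\into TQ_3$, which is impossible since $h^0(TQ_3(-1))=0$. Neither of your sketched alternatives obviously substitutes for this: route (i) is blocked for the reason you yourself note ($h^2(\Omega^1_{Q_3}(-1))=1$, so the boundary map must be analyzed and no mechanism for doing so is given), and route (ii) as stated (restriction to the lines or to a hyperplane section) is a plan, not a proof. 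Everything else in your write-up --- including the extra checks that $C\ne\varnothing$ via $\int_{Q_3}c_3(TQ_3(1))=20$ and the splitting argument $E=\mO_{Q_3}^{\oplus2}$ when $c_2(E)=0$, where the paper is terser --- is sound and consistent with the paper.
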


We remark that the bundles featured in items (1) and (2) above are precisely the one described in detail in Section \ref{OS bundles}.

\begin{proof}
	By Bogomolov's inequality, $\Delta(\nf^\vee)H = 16H^3-4[C]H \geq 0.$ Thus, $\deg C \leq 8.$  On the other hand, since $\nf^\vee$ is locally free, Corollary \ref{deg+chi} yields $\chi(\cO_C) = 10-\frac{3}{2} \deg (C)$. If $\deg(C)=2$, then $\chi(\cO_C)=7$, thus $C$ cannot be reduced. It follows that $\deg (C)=4,6,8$ since the $\chi(\cO_C)$ is a integer.
	
	
	If $\deg(C)=4$, then $\chi(\cO_C)=4$ and the reducedness assumption implies that $C$ consists of four skew lines. The conormal sheaf has Chern classes $c_1(\nf^\vee) =-4H$ and $c_2(\nf^\vee) =12L$. So, if $E:=\nf^\vee(2)$, then $E$ is a $\mu$-semistable rank 2 locally sheaf with $c_1(E)=0$ and $c_2(E)=4L$. It follows that $h^0(E(1))>0$, since $h^0(E)\ne0$ if $E$ is strictly $\mu$-semistable and by \cite[Proposition 3.1]{Ott-Suz} if $E$ is $\mu$-stable, thus starting from a non trivial section $\sigma\in H^0(E(1))$ we can build the following commutative diagram:
	$$ \xymatrix{
		& 0 \ar[d] & 0\ar[d] & & \\
		& \mO_{Q_3}(-3) \ar[d]\ar@{=}[r] & \mO_{Q_3}(-3) \ar[d] & & \\
		0\ar[r] & E(-2) \ar[d]\ar[r] & \Omega^1_{Q_3} \ar[d]\ar[r] & \mathcal{I}_C(1)\ar[r]\ar@{=}[d] & 0 \\
		0\ar[r] & \mathcal{I}_X(-1) \ar[d]\ar[r] & G \ar[d]\ar[r] & \mathcal{I}_C(1)\ar[r] & 0 \\
		& 0  & 0 & &
	} $$
	where $X:=(\sigma)_0$ is the vanishing locus of $\sigma$. Dualizing the bottom line, we obtain the exact sequence
	$$ 0\to \mO_{Q_3}(-1) \to G^{\vee} \to \mO_{Q_3}(1) 
	\stackrel{\eta}{\to} \omega_C(2) \to \cdots $$
	inducing a section $\eta\in H^0(\omega_C(1))$. Since $C$ is the union of 4 skew lines, we have that $h^0(\omega_C(1))=0$ thus $G^{\vee}\simeq\mO_{Q_3}(-1)\oplus\mO_{Q_3}(1)$. However, dualizing the middle column would induce a monomorphism $G^{\vee}\into TQ_3$, which can not exist since $h^0(TQ_3(-1))=0$. 
	
	If $\deg(C)=6$, then $\chi(\cO_C)=1$ and the conormal sheaf has Chern classes $c_1(\nf^\vee) =-4H$ and $c_2(\nf^\vee) =10L$; set $E:=\nf^\vee(2)$, so that $E$ is a rank 2 locally sheaf with $c_1(E)=0$ and $c_2(E)=2L$. Dualizing the exact sequence $ 0 \to E(-2) \to \Omega^1_{Q_3} \to \sI_{C}(1) \to 0$, we obtain
	$$ 0\to\mO_{Q_3}(-1)\to TQ_3 \to E(2) \to \omega_C(2)\to 0, $$
	and one can check that $h^0(E)=h^0(\omega_C)$. Since $E$ is $\mu$-semistable, we have two possibilities: either $E$ is $\mu$-stable and $h^0(E)=0$, or $E$ is strictly $\mu$-semistable and $h^0(E)=1$. Note that $h^0(\omega_C)=h^1(\sI_C)$, so $C$ is connected when $E$ is $\mu$-stable, and has 2 connected components when $E$ is strictly $\mu$-semistable. In this last case, say $C=C_1\sqcup C_2$ so that $\chi(\cO_{C_1})+\chi(\cO_{C_2})=\chi(\cO_C)=1$, hence $\chi(\cO_{C_1})=0$ and $\chi(\cO_{C_2})=1$ (or the other way around), and we get that $C_1$ is rational and $C_2$ is elliptic.
	
	Finally, if $\deg(C)=8$, then $\chi(\cO_C)=-2$ and the conormal sheaf has Chern classes $c_1(\nf^\vee) =-4H$ and $c_2(\nf^\vee)=4L$; set $E:=\nf^\vee(2)$, so that $E$ is a $\mu$-semistable rank 2 locally sheaf with $c_1(E)=0$ and $c_2(E)=0$. Therefore, only possibility is $E=\mathcal{O}_Q^{\oplus 2}$ hence $\nf^\vee=\mathcal{O}_Q(-2)^{\oplus 2}$. Furthermore, one can use the exact sequence \eqref{deg 1 fol} to check that $h^1(\sI_C)=0$, so $C$ is connected.
\end{proof}

\section*{Acknowledgments}

The authors thank Alan Muniz, Maurício Corrêa and Daniele Faenzi for useful discussions and suggestions.

%
%
	\subsection*{Financial disclosure}
MJ is supported by the CNPQ grant number 302889/2018-3 and the FAPESP Thematic Project 2018/21391-1. DS is supported by a PhD grant from CNPQ, and some of the results presented are part of his thesis. The authors also acknowledge the financial support from Coordenação de Aperfeiçoamento de Pessoal de Nível Superior - Brasil (CAPES) - Finance Code 001.
%
%


\end{document}